\documentclass[11pt]{article}
\usepackage[utf8]{inputenc}

\textheight=230mm
\textwidth=158mm
\topmargin=-10mm
\oddsidemargin=-0mm
\evensidemargin=-35mm
\hoffset0.31cm
\voffset-0.7cm

\usepackage{amsmath,amsthm,amssymb}
\usepackage{graphicx}
\usepackage{wrapfig}
\usepackage{enumitem}
\usepackage{tikz-cd}
\usepackage{esvect}
\usepackage{bm}
\usepackage{tocloft}
\usepackage{lipsum} 
\usepackage{mathrsfs}
\usepackage{float}
\usepackage{graphicx}
\usepackage{commath}
\usepackage{mathtools}
\usepackage{ulem}
\usepackage[nottoc]{tocbibind}

\font\sml=cmr7

\usetikzlibrary{backgrounds, matrix,fit,matrix,decorations.pathreplacing, calc, positioning}

\makeatletter
\newcommand\setItemnumber[1]{\setcounter{enum\romannumeral\@enumdepth}{\numexpr#1-1\relax}}
\makeatother

\newcommand{\N}{\mathbb{N}}

\newcommand{\R}{\mathbb{R}}
\newcommand{\C}{\mathbb{C}}

\newcommand{\e}{\varepsilon}

\newtheorem{theorem}{Theorem}
\newtheorem{lemma}[theorem]{Lemma}
\newtheorem{proposition}[theorem]{Proposition}

\newtheorem{corollary}[theorem]{Corollary}
\newtheorem{definition}[theorem]{Definition}
\newtheorem{claim}[theorem]{Claim}
\newtheorem{conjecture}[theorem]{Conjecture}

\theoremstyle{definition}

\newtheorem{remark}[theorem]{Remark}

\makeatletter
\newtheorem*{rep@theorem}{\rep@title}
\newcommand{\newreptheorem}[2]{%
\newenvironment{rep#1}[1]{%
 \def\rep@title{#2 \ref{##1}}%
 \begin{rep@theorem}}%
 {\end{rep@theorem}}}
\makeatother

\newreptheorem{theorem}{Theorem}
\newreptheorem{lemma}{Lemma}
\newreptheorem{proposition}{Proposition}

\PassOptionsToPackage{hyphens}{url}\usepackage{hyperref} % must be the last one
\hypersetup{
  colorlinks=true,
  citecolor=blue,
  linkcolor=blue, 
  urlcolor=cyan
}

\def\?{{\bf ****???????****}}
\def\M{{\mathcal M}}

% \pgfplotsset{my style/.append style={axis x line=middle, axis y line=
% middle, xlabel={$x$}, ylabel={$y$} }}
% \pgfplotsset{compat=1.18}

\newcommand{\defeq}{\vcentcolon=}

\long\def\frame#1#2#3#4{\hbox{\vbox{\hrule height#1pt
 \hbox{\vrule width#1pt\kern #2pt
 \vbox{\kern #2pt
 \vbox{\hsize #3\noindent #4}
\kern#2pt}
 \kern#2pt\vrule width #1pt}
 \hrule height0pt depth#1pt}}}

\newcommand\blfootnote[1]{%
	\begingroup
	\renewcommand\thefootnote{}\footnote{#1}%
	\addtocounter{footnote}{-1}%
	\endgroup
}

\title{The Nazarov proof of the non-symmetric\\ Bourgain--Milman inequality}
\author{Vlassis Mastrantonis, Yanir A. Rubinstein}
\date{12 June 2022}

\begin{document}

\let\i\undefined
\newcommand{\i}{\sqrt{-1}}

\maketitle

\begin{abstract}
In 2012, Nazarov used Bergman kernels and H\"ormander's $L^2$ estimates for the $\bar\partial$-equation
to give a new proof of the Bourgain--Milman theorem for symmetric convex
bodies and made some suggestions on how his proof should extend to general convex bodies. 
This article achieves this extension and serves simultaneously
as an exposition to Nazarov's work.
A key new ingredient is an affine invariant associated to
the Bergman kernel of a tube domain.
This gives the first `complex' proof of the Bourgain--Milman theorem for general convex 
bodies, specifically, without using symmetrization.
\end{abstract}

\blfootnote{Research supported by NSF grants 
DMS-1906370,2204347, BSF grants 2016173,2020329,
and a Hauptman Summer Research Award at the University of Maryland.
Y.A.R. thanks B. Berndtsson and B. Klartag for helpful discussions.}
\tableofcontents

\section{Introduction and main result}
Let $K\subset \R^n$ be a \textit{convex body}, that is a compact convex set with non-empty interior.  Its \textit{polar}
    $
    K^\circ\defeq \{y\in \R^n: \langle x,y\rangle\leq 1, \text{ for all } x\in K\}
    $
is convex as it is the intersection of convex sets (half-spaces); it is compact 
if and only if 
\begin{equation}
\label{intKeq}
0\in\mathrm{int}\,K,
\end{equation}
an assumption that we will use 
very often.
Also, $K$ is called symmetric
if \hfill\break
$K=-K\defeq
\{x\in\R^n: -x\in K\}.$

Let $A\in GL(n,\R)$. 
While $AK$ and $K$ could have wildly differing volume
(with respect to the Lesbegue measure $d\lambda$), the volume of the product body
$AK\times (AK)^\circ\subset \R^n\times\R^n$ is equal
to that of $K\times K^\circ$.
This leads to the following $GL(n,\R)$-invariant functional on convex bodies \cite[p. 95]{mahler}.
\begin{definition}
\label{Mahlerdef}
Let $K\subset \R^n$ be a convex body
satisfying \eqref{intKeq}. The \textit{Mahler volume} of $K$ is
\begin{equation*}
    \M(K)\defeq n! |K\times K^\circ|= n! |K||K^\circ|.
\end{equation*}
\end{definition}

Crude bounds on $\M$ were demonstrated by Mahler already in 1939 \cite[(6)]{mahler}.
In 1987,
Bourgain--Milman showed that there exists an unspecified but uniform $c>0$ independent of $K$ and $n$ such that 
\cite[Corollary 6.1]{bourgain-milman} 
\begin{equation}\label{bm_int}
    \M(K)\geq c^n,
\end{equation}
that---aside from determining the best value of $c$---is optimal in an
asymptotic sense \cite[pp. 149--150]{ryabogin-zvavitch}. 
One of Mahler's conjectures asserts that $c$ should be $4$ if $K$ is also symmetric \cite[p. 96]{mahler},
and another that for general convex $K$ \cite[(1)]{mahler2} \cite[p. 564]{schneider},
\begin{equation}\label{bm_int_ns}
    \M(K)\geq \frac{(n+1)^{n+1}}{n!}
\end{equation}
(this is, by Stirling's formula, asymptotic to $e^n$).

In 2012, Nazarov proved (\ref{bm_int}) 
with $c=\pi^3/16\approx 1.937$ for {\it symmetric $K$}. His proof
was pioneering in that he used complex methods,
namely, Bergman kernels and H\"ormander's $\overline{\partial}$-theorem. Moreover, he made some insightful suggestions \cite[p. 342]{nazarov} on how his
mainly {\it complex} arguments should extend to yield  (\ref{bm_int})
{\it for all $K$}
with $c=\pi/4\approx 0.785$
{\it without} using symmetrization techniques (but
with a sacrifice in the value of the constant, 
see Remark \ref{NonOptRk}). 
The purpose of the present article is to give an exposition of his
beautiful ideas and carry out the necessary computations mostly
following his suggestions. The main result of this article confirms
Nazarov's expected bound, and thus gives
the first proof of the Bourgain--Milman theorem for general convex 
bodies without using symmetrization:

\begin{theorem}\label{nazarov_lower_bound}
For all convex bodies $K\subset\R^n$, one has (\ref{bm_int}) with 
 $c= \pi/4$.
\end{theorem}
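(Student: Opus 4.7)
The plan is to follow Nazarov's complex-analytic strategy. Associated to $K$ I consider the tube domain
\begin{equation*}
T_K\;\defeq\;\R^n+i\,\mathrm{int}\,K\;\subset\;\C^n,
\end{equation*}
which is pseudoconvex because $K$ is convex. Let $B_{T_K}$ denote the Bergman kernel of $T_K$; since $T_K$ is invariant under real translations, the diagonal value $b_K(y)\defeq B_{T_K}(iy,iy)$ depends only on $y\in\mathrm{int}\,K$. Diagonalising $H^2(T_K)$ via the Fourier transform in $\mathrm{Re}\,z$ (Plancherel plus equality in Cauchy--Schwarz) gives the identity
\begin{equation*}
b_K(y)\;=\;\frac{1}{(2\pi)^n}\int_{\R^n}\frac{d\xi}{J_{K-y}(\xi)},\qquad J_L(\xi)\defeq\int_L e^{-2\langle t,\xi\rangle}\,dt,
\end{equation*}
displaying $b_K(y)$ as a geometric invariant of $K$ measured from the vantage point $y$.

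The strategy is then to bracket $b_K(y_\ast)$ at a carefully chosen $y_\ast\in\mathrm{int}\,K$ between a \emph{lower} bound of the form $c_1^n/|K|$ and an \emph{upper} bound of the form $c_2^n\,n!\,|K^\circ|$, from which the Bourgain--Milman inequality follows. The lower bound comes from H\"ormander's $L^2$-estimates for $\bar\partial$ on $T_K$: solving $\bar\partial u=\alpha$ against a plurisubharmonic weight tailored to $K-y_\ast$ (Nazarov's symmetric choice is $\pi|\mathrm{Im}\,z|^2$; in general one wants a gauge-type quadratic built from the Minkowski functional of $K-y_\ast$, suitably normalised) produces a holomorphic function of controlled $L^2$-norm peaked at $iy_\ast$. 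The upper bound comes from the Fourier identity itself, by lower-bounding $J_{K-y_\ast}(\xi)$ through a convexity argument that collapses the integral into a constant times $|(K-y_\ast)^\circ|$.

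The non-symmetric case requires the ``key new ingredient'' announced in the abstract: an affine-invariant Bergman-type quantity attached to $K$. Log-plurisubharmonicity of $B_{T_K}$ (equivalently, Pr\'ekopa--Leindler applied to the Fourier integrand) shows that $b_K$ is log-convex on $\mathrm{int}\,K$ and blows up at $\partial K$, so it has a unique minimiser $y_\ast(K)$, and the product $|K|\cdot b_K(y_\ast(K))$ is invariant under $K\mapsto AK+v$ for $A\in GL(n,\R)$ and $v\in\R^n$. Placing $K$ in a normalised position where $y_\ast=0$ identifies $(K-y_\ast)^\circ$ with $K^\circ$; evaluating the two bounds at $y_\ast$ and chaining them yields $\M(K)\geq(\pi/4)^n$, with the sharp constant coming from an optimal calibration of the H\"ormander weight.

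I expect the main obstacle to be the matching of constants between the H\"ormander lower bound and the Fourier upper bound. In the symmetric case $0$ is simultaneously a natural centre of $K$ and the Santal\'o point of $K^\circ$, so the weight $\pi|\mathrm{Im}\,z|^2$ is geometrically adapted to both bodies and yields Nazarov's $(\pi^3/16)^n$. For general $K$ the Bergman centre $y_\ast$ need not agree with other affine-invariant centres, and the upper bound naturally involves $|(K-y_\ast)^\circ|$ rather than $|K^\circ|$; arranging the affine invariant so that this translation can be absorbed without losing the factor $\pi/4$ is the delicate step where most of the computational work will concentrate.
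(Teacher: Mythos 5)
Your plan follows Nazarov's strategy, which is precisely the paper's route: reduce $\M(K)$ to the diagonal Bergman kernel of the tube domain $T_K$ via the Fourier identity $b_K(y)=(2\pi)^{-n}\int d\xi/J_{K-y}(\xi)$, obtain an upper bound by Jensen-type lower-bounding of $J_{K-y}$ (the paper's Claim \ref{integral_polar_support} plus Lemma \ref{J_K-lower_bound}), and obtain a lower bound on $b_K$ by constructing a peaked holomorphic competitor via H\"ormander's $\bar\partial$-estimate (Proposition \ref{hormander}), with tensorization to clean the constants (Proposition \ref{bk_tensorization}). So the architecture is the paper's, but several pieces of your proposal deviate in ways that matter.

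First, the normalizing factor in your affine invariant is wrong: since the Bergman kernel scales as $\mathcal{K}_{T_{\lambda K}}(\lambda z,\lambda w)=\lambda^{-2n}\mathcal{K}_{T_K}(z,w)$ and $|K|$ scales as $\lambda^n$, the translation- and $GL(n,\R)$-invariant quantity is $|K|^2\,b_K(y_\ast(K))$, not $|K|\,b_K(y_\ast(K))$. (The paper's functional is $\mathcal{B}(K)=|K|^2\mathcal{K}_{T_K}(\sqrt{-1}\,b(K),\sqrt{-1}\,b(K))$, Definition \ref{BKDef} and Lemma \ref{bk_ai}.)

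Second, your choice of affine-equivariant base point is genuinely different: you use the minimizer $y_\ast$ of $b_K$ on $\mathrm{int}\,K$ (unique, as you say, by strict log-convexity), whereas the paper uses the barycenter $b(K)$. This is an interesting alternative, and in fact it lets you skip the paper's final Santal\'o-point maneuver: applying the ``upper bound'' Proposition \ref{nazprop1} at $a=0$ and then $b_K(0)\geq b_K(y_\ast)$ yields $\M(K)\geq\pi^n|K|^2 b_K(y_\ast)$ directly, with no translation of $K$ needed. But the price is steep: you now need $|K|^2 b_K(y_\ast)\geq 4^{-n}$, which is a \emph{stronger} statement than the paper's Proposition \ref{nazprop2}, since $y_\ast$ is exactly the point where $b_K$ is smallest. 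The paper's H\"ormander construction is calibrated to the barycenter: it uses John's theorem \emph{together with Santal\'o's inequality} (Lemma \ref{naz_john_lemma} and Remark \ref{remark_santalo}), and the Santal\'o input requires $b(K)=0$, not $y_\ast(K)=0$. Your plan would therefore need a different way to get geometric control at $y_\ast$. The John-free tensorization route (Remark \ref{avoidJohnRk}) does work at any interior point and would rescue your plan, but you should be aware that the paper's main John-based argument would not transfer as is.

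Third, and most seriously, your description of the H\"ormander weight --- ``a gauge-type quadratic built from the Minkowski functional of $K-y_\ast$'' --- is missing the crucial ingredient. A purely quadratic weight, even one adapted to the body, would give you a holomorphic competitor of controlled $L^2$-norm, but nothing forces the $\bar\partial$-correction $h$ to vanish at the base point, so you lose the normalization $f(iy_\ast)=1$. The paper's weight \eqref{weight_phi} has two terms: the quadratic $|\mathrm{Im}\,z|^2/(4n^2r^2)$ supplying the $\sqrt{-1}\partial\bar\partial$ lower bound, and $2n\log\sup_{t\in K^\circ}|\Phi(\langle z,t\rangle)|$, a plurisubharmonic complexified support function with a logarithmic pole of order $2n$ at the origin. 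It is precisely this singularity (Lemma \ref{properties_list}~(iii)) that makes $e^{-\phi}$ non-integrable near the base point, forcing $h(0)=0$. Without the log term, the construction of the holomorphic $f=g+h$ with $f(0)=1$ collapses. This is the step where ``an optimal calibration of the H\"ormander weight'' actually lives, and it is not a quadratic calibration.
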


There are several points in the proof of Theorem \ref{nazarov_lower_bound}
that differ from Nazarov's analysis in the symmetric case:

\smallskip
\noindent 
{\it The weight function for H\"ormander's $L^2$-estimates.}
An interesting issue not present in the symmetric case comes from applying John's theorem to non-symmetric bodies. 
This comes up when one constructs the appropriate weight function to be used in H\"ormander's $L^2$-estimates. 
Yet without the assumption of symmetry, John's theorem does not guarantee that the maximal ellipsoid contained in the convex body will be centered at the origin. It may even happen that the origin is not at all contained in the maximal ellipsoid. In this latter case,
 $   B_2^n(a,r)\subset K\subset B_2^n(a,nr), $
and $0\notin B_2^n(a,r)$, i.e., $K^\circ\subset B_2^n(a,r)^\circ$ which is not a 
useful bound, since in this case $|B_2^n(a,r)^\circ|=\infty$. This does not allow for good control on several estimates, e.g., items (iii), (iv) and (v) in Lemma \ref{properties_list}. However, using Santal\'o's inequality one can overcome this by proving the estimate $K^\circ\subset B_2^n(0,\frac{2n}{r})$
(Lemma \ref{naz_john_lemma}). It turns out that this argument is not essential
for our goal, but we include it since it might be of independent interest;
we also give an alternative analytical argument that avoids the geometric arguments involving
John's theorem altogether and relies purely on tensorization
(Remark \ref{avoidJohnRk}).

\smallskip
\noindent 
{\it The bump function.}
In Nazarov's approach it is crucial to carefully construct 
a bump function to be used in setting up a $\bar\partial$-equation whose solution will allow to construct
a holomorphic $L^2$ function on a tube domain with good estimates
via H\"ormander's theorem. This largely follows Nazarov's ideas
in the symmetric case but the calculations are more involved in the general case.
For instance, a difference from the symmetric case is accounting for small perturbations of the center of some bodies, which are no longer centered at the origin. For example, the bump function constructed for the proof of Theorem \ref{nazarov_lower_bound} is no longer supported on $\sigma\delta K_\C\setminus \delta K_\C$, but rather, in $(\sigma\delta K_\C- (\sigma-1)\delta (a+\sqrt{-1}a))\setminus \delta K_\C$ (see Lemma \ref{controlled_bump_function}).

\smallskip
\noindent 
{\it An affine invariant.}
The proof of Proposition \ref{nazprop2}
relies on both the weight function and the bump function discussed above.
However, once again, because of the possibly awkward position of the non-symmetric body after applying John's theorem it is necessary to
find a quantity that is controlled. Fortunately, 
we observe the affine invariance of 
$\mathcal{B}(K):=|K|^2\mathcal{K}_{T_K}(\sqrt{-1}b(K), \sqrt{-1}b(K))$ (see Definition \ref{BKDef},
Lemma \ref{bk_ai}, and \S\ref{sec4.4})
that is new compared to the symmetric case and allows
to complete the proof of Proposition \ref{nazprop2},
and hence of Theorem~\ref{nazarov_lower_bound}.

Several remarks follow to place Theorem 
\ref{nazarov_lower_bound} in context.

\begin{remark}
\label{NonOptRk}
Applying Nazarov's complex methods directly to non-symmetric convex 
bodies---as we do in this article---gives (\ref{bm_int}) with $c=\pi/4\approx 0.785$. On the other hand, as pointed out by Nazarov,
replacing a given non-symmetric $K$ with an associated symmetrization of $K$
and then applying Nazarov's bound for symmetric bodies yields
a better estimate, namely, 
$\frac12 \pi^3/16\approx 0.968$ \cite[p. 342]{nazarov}.
This well-known ``symmetrization trick" is briefly described in Corollary \ref{sym_cor}
for the sake of exposition.
Thus, the main point of this article, as in Nazarov's original article, 
is not to derive the best possible constant but rather to highlight that
Nazarov's complex methods {\it give the first
proof of the Bourgain--Milman theorem for general bodies without using 
symmetrization}. For previous proofs that use symmetrization see, e.g.,
 \cite[Lemma 3.1]{bourgain-milman}, \cite[Theorem 1.4]{milman}, \cite[Corollary 1.6]{kuperberg2}, \cite[Theorem 1.3]{giannopoulos-paouris-vritsiou}. 
\end{remark}

\begin{remark}
B\l{}ocki recovered one of Nazarov's estimates on the Bergman kernel (Proposition \ref{nazprop2}) by providing lower bounds for the Bergman kernel of a 
convex domain in $\C^n$ \cite[Theorem 2]{blocki2}.
As we explain in a forthcoming publication  \cite{MR}, 
even though also B\l{}ocki only considered  symmetric convex bodies \cite[\S 4]{blocki}, his approach readily applies to the non-symmetric case, yielding another proof of Theorem \ref{nazarov_lower_bound}. However, we believe that the approach presented here is 
more accessible.
\end{remark}

\begin{remark}
The best known constant for (\ref{bm_int}) in dimensions $n\geq 4$ with $K$ symmetric is  $c=\pi$ \cite[Corollary 1.6]{kuperberg2}, \cite[Theorem 2.1]{berndtsson1}. The sharp bound $c=4$ is due to Mahler in dimension $n=2$ \cite[(2)]{mahler2}, and Iriyeh--Shibata in dimension $n=3$ \cite[Theorem 1.1]{iriyeh-shibata} (cf. Fradelizi et al. \cite{fradelizi-et_al}). For general $K$, the best known constant is $c=2$ for $n=3$ and $c=\pi/2$
for $n\geq 4$ by the symmetric bound and Corollary \ref{sym_cor}. In dimension $n=2$ the sharp bound \eqref{bm_int_ns} is due to Mahler \cite[(1)]{mahler2}.
\end{remark}

\noindent
{\bf Organization.}
In Section \ref{sec2} basic facts about Bergman kernels are given,
the functional $\mathcal{B}$ is introduced (Definition \ref{BKDef}),
and its affine invariance is stated (Lemma \ref{bk_ai}).
This is followed by the statement of two key estimates (Propositions \ref{nazprop1} and \ref{nazprop2}) on the Mahler volume $\M$ and on $\mathcal{B}$
involving Bergman kernels of tube domains 
that lead to the proof of Theorem \ref{nazarov_lower_bound}. 
The key ideas in the proof of Proposition \ref{nazprop1} 
are outlined at the beginning of
\S\ref{nazsub2} and the detailed proof occupies the remainder of that subsection.
It relies on the well-known
Paley--Wiener correspondence for tube domains that is carefully derived in 
\S\ref{sec3.1} relying and expanding on several sources
\cite[\S 3]{berndtsson2}, \cite[\S 3]{blocki}, \cite[\S 4]{hultgren}.
Section \ref{sec4} derives a lower bound on $\mathcal{B}$
%the Bergman kernel of a tube domain
and we refer to the beginning of that section for a detailed step-by-step road-map
for the proof. Many of the steps are adaptations
(some rather straightforward, some quite technical) of
Nazarov's arguments from the symmetric setting \cite[\S 5--6]{nazarov},
yet several steps are new to the non-symmetric setting and the study of $\mathcal{B}$.
First, the affine invariance of 
$\mathcal{B}(K)=|K|^2\mathcal{K}_{T_K}(\sqrt{-1}b(K), \sqrt{-1}b(K))$ 
is derived in \S\ref{sec4.4}.
Second, a convenient displacement of $K$ is studied in \S\ref{repositionSec}.
It is at this point in the analysis that the use of Santal\'o's theorem occurs (Lemma \ref{naz_john_lemma}), though
we also give an alternative argument that avoids this feature
(Remark \ref{avoidJohnRk}).
Third, Nazarov's plurisubharmonic support function is extended to the
non-symmetric setting in \S\ref{pshsupp_sub} leading to the
definition of the weight function.
Next, Lemma \ref{properties_list} in 
\S\ref{weight_function_sub} contains 
the properties needed from the weight function 
for the application of H\"ormander's $L^2$ estimates. 
In \S\ref{bump_function_sub}
a bump function is constructed to be used in setting up a $\bar\partial$-equation whose solution will allow to construct
a holomorphic $L^2$ function on the tube domains with good estimates. 
This follows Nazarov's ideas
in the symmetric case but the calculations are more involved in the general case.
The proof of Proposition \ref{nazprop2}
occupies \S\ref{lower_bound_sub} and relies on the ingredients above together
with a standard ``tensorization trick" for Bergman kernels
described in \S\ref{tensorSec}.
Finally, Appendix \ref{sec_sym} serves as an explanation (though
not self-contained) of the classical symmetrization trick that has been prevalent in other approaches to the Bourgain--Milman theorem  \cite[p. 342]{nazarov}.

%%%%%%%%%%%%%%%

\section{Mahler volume and Bergman kernel of tube domains}\label{sec2}

This section introduces the functional $\mathcal{B}$ 
(Definition \ref{BKDef}) and
provides the proof of Theorem \ref{nazarov_lower_bound},
modulo two key estimates (Propositions \ref{nazprop1}
and \ref{nazprop2}) and the affine invariance of $\mathcal{B}$
(Lemma \ref{bk_ai}).

\subsection{Bergman spaces}
\label{BergSec}

Nazarov's key idea is to reduce the proof of Theorem \ref{nazarov_lower_bound} to the study of the Hilbert space of $L^2$-integrable holomorphic functions
\begin{equation*}
    A^2(T_K)\defeq \{f: T_K\to \C: f \text{ holomorphic, } \|f\|^2_{L^2(T_K)}\defeq \int_{T_K}|f(z)|^2\dif\lambda(z)< \infty\},
\end{equation*}
on so-called `tube domains 
over $K$',
\begin{equation}
\label{TKEq}
T_K\defeq 
\R^n+ \sqrt{-1}(\mathrm{int}\,K)
\subset \C^n.
\end{equation}

Fix
a convex body $K\subset \R^n$. 
For $w\in T_K$, the evaluation map
\begin{gather*}
        \mathrm{ev}_{w}\equiv \mathrm{ev}_{T_K,w}: f\in A^2(T_K)\mapsto f(w) \in\C, 
\end{gather*}
is a bounded linear functional 
from $A^2(T_K)$ equipped with
the $L^2(T_K)$ norm to $(\C, |\,\cdot\,|)$:
this follows
from the holomorphicity of $f$, which implies that $|f|^2$ is subharmonic,
so for $\e>0$ such that $B_2^{2n}(w,\e)\subset T_K$, by
the mean value inequality, 
\begin{equation*}
    |\mathrm{ev}_{T_K,w}(f)|^2= |f(w)|^2\leq \frac{1}{|B_2^{2n}(w,\e)|}\int_{B_2^{2n}(w,\e)}|f(z)|^2\dif\lambda(z)\leq \frac{\|f\|^2_{L^2(T_K)}}{\e^{2n}|B_2^{2n}(0,1)|},
\end{equation*}
so 
\begin{equation*}
||\mathrm{ev}_{T_K,w}||_{(A^2(T_K),L^2(T_K)),(\C, |\,\cdot\,|)}\defeq
     \sup_{\substack{0\neq f\in A^2(T_K)}} \frac{|f(w)|}{\|f\|_{L^2(T_K)}}
     \le {\e^{-n}|B_2^{2n}(0,1)|^{-1/2}},
\end{equation*}
that is bounded as claimed.

Thus, the Riesz representation theorem 
provides
\begin{equation}\label{bergmanL2}
\mathcal{K}_{T_K}(\,\cdot\,,w)\in A^2(T_K),
\end{equation}
%such that
satisfying
\begin{equation}
\label{reproducingEq}
    f(w)= \langle f, \mathcal{K}_{T_K}(\cdot, w)\rangle_{L^2(T_K)}= \int_{T_K}f(z)\overline{\mathcal{K}_{T_K}(z,w)}\dif\lambda(z),
\end{equation}
for all $f\in A^2(T_K)$.
The reproducing kernel $\mathcal{K}_{T_K}$, considered as a function on 
$T_K\times T_K$, 
is the \textit{Bergman kernel} of the tube domain.
By (\ref{bergmanL2}) 
$\mathcal{K}_{T_K}$ is holomorphic in $z$.
Since in \eqref{reproducingEq} $f\in A^2(T_K)$, i.e., $f$ is 
holomorphic, $\mathcal{K}_{T_K}$ 
is also anti-holomorphic in $w$.

\subsection{Two estimates on the Bergman kernel}

In essence, there are two key estimates in the proof of Theorem \ref{nazarov_lower_bound},
following a standard preliminary step 
(described in detail in the proof below)
involving a translation by the barycenter
\begin{equation}
\label{bKEq}
    b(K)\defeq \frac{1}{|K|}\int_K x\dif x. 
\end{equation}

\noindent
{\it The first estimate.} The first step is a lower bound on the Mahler volume of 
the translated body in terms of the Bergman kernel.

\begin{proposition}\label{nazprop1}
For a convex body $K\subset \R^n$ and $a\in \mathrm{int}\,K$,
\begin{equation*}
\M(K-a)\geq \pi^n |K|^2 \mathcal{K}_{T_K}(\sqrt{-1}a,\sqrt{-1}a).
\end{equation*}
\end{proposition}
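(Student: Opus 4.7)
The plan is to reduce Proposition~\ref{nazprop1} to a concrete integral inequality via the Paley--Wiener correspondence for tube domains (developed in \S\ref{sec3.1}), and then verify that inequality by tensorization and a one-dimensional computation.

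Paley--Wiener identifies $A^2(T_K)$ isometrically with the weighted space $L^2(\R^n,(2\pi)^n\mathcal{L}_K(2\xi)\,d\xi)$ via $f(z)=\int g(\xi)e^{\sqrt{-1}\langle z,\xi\rangle}\,d\xi$, where $\mathcal{L}_K(\xi)\defeq\int_K e^{-\langle x,\xi\rangle}\,dx$ is the Laplace transform of $\mathbf{1}_K$. Evaluating at $z=\sqrt{-1}a$ gives $f(\sqrt{-1}a)=\int g(\xi)e^{-\langle a,\xi\rangle}\,d\xi$, and a single application of Cauchy--Schwarz (saturated by $g(\xi)\propto e^{-\langle a,\xi\rangle}/\mathcal{L}_K(2\xi)$) yields the closed-form identity
$$
\mathcal{K}_{T_K}(\sqrt{-1}a,\sqrt{-1}a)=\frac{1}{(2\pi)^n}\int_{\R^n}\frac{e^{-2\langle a,\xi\rangle}}{\mathcal{L}_K(2\xi)}\,d\xi.
$$
Substituting $\eta=2\xi$, using the translation covariance $\mathcal{L}_K(\eta)=e^{-\langle a,\eta\rangle}\mathcal{L}_{K-a}(\eta)$, and the support-function identity $n!|(K-a)^\circ|=\int_{\R^n}e^{-h_{K-a}(\eta)}\,d\eta$, the proposition becomes equivalent (writing $L\defeq K-a$) to the integral inequality
$$
|L|\int_{\R^n}\frac{d\eta}{\mathcal{L}_L(\eta)}\;\le\;4^n\int_{\R^n}e^{-h_L(\eta)}\,d\eta. \qquad(\star)
$$

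To verify $(\star)$, the key structural observation is that both sides tensorize cleanly over Cartesian products $L=L_1\times L_2$ (using $\mathcal{L}_L=\mathcal{L}_{L_1}\mathcal{L}_{L_2}$ and $h_L=h_{L_1}+h_{L_2}$), reducing the whole problem to the one-dimensional case. For a centered interval $L=[-r,r]$, where $\mathcal{L}_L(\eta)=2\sinh(r\eta)/\eta$, Euler's identity $\int_\R\eta/\sinh\eta\,d\eta=\pi^2/2$ collapses $(\star)$ to the numerical inequality $\pi^2/4\le 4$, which holds with slack $16/\pi^2$ -- the source of Nazarov's constant $\pi/4$. The principal obstacle will be extending this one-dimensional, product-structure argument to a general, non-product convex body $L$: the centering condition $b(L)=0$ (equivalently, $a=b(K)$ in the original setup, which is exactly the effect of the preliminary translation by the barycenter) is essential, since it pins $\mathcal{L}_L$ to have its minimum at the origin, and the general case is then handled by a log-concavity argument of Prékopa--Leindler type applied to the integrands $1/\mathcal{L}_L(\eta)$ and $e^{-h_L(\eta)}$.
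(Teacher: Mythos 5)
Your reduction to the integral inequality $(\star)$ is correct, and after the identification $\mathcal{L}_L(\eta)=|L|e^{\tilde h_L(-\eta)}$ it is exactly the inequality the paper's proof rests on (Claim \ref{integral_polar_support}, Lemma \ref{J_K-lower_bound}, and the diagonal formula \eqref{nazeq121}). The Cauchy--Schwarz derivation of the closed form for $\mathcal{K}_{T_K}(\sqrt{-1}a,\sqrt{-1}a)$ is also correct and equivalent to Lemma \ref{bk-tube}. Where your argument has a genuine gap is in the verification of $(\star)$. The tensorization observation is true but useless here: it covers only bodies of the form $L=L_1\times L_2$, and a general convex body is not a Cartesian product, so this cannot ``reduce the whole problem to the one-dimensional case.'' The closing appeal to a ``Pr\'ekopa--Leindler type'' argument is not carried out, and it is not clear what it would be. What is actually needed is a short \emph{pointwise} bound, not an integrated one. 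Once $b(L)=0$, let $y_0$ maximize $\langle\cdot,x\rangle$ over $L$; convexity gives $\tfrac{L+y_0}{2}\subset L$, whence
\begin{equation*}
\mathcal{L}_L(-2x)=\int_L e^{2\langle y,x\rangle}\dif y\ge\int_{\frac{L+y_0}{2}}e^{2\langle y,x\rangle}\dif y
=\frac{e^{\langle y_0,x\rangle}}{2^n}\int_L e^{\langle u,x\rangle}\dif u
\ge\frac{|L|}{2^n}e^{h_L(x)},
\end{equation*}
the last step by Jensen and $b(L)=0$. Integrating $|L|/\mathcal{L}_L(-2x)\le 2^n e^{-h_L(x)}$ over $\R^n$ gives $(\star)$ directly, with no tensorization or Pr\'ekopa--Leindler needed. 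This is the content of Lemma \ref{J_K-lower_bound}, and you should replace your last paragraph by this argument.

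There is a second, more interesting point where your instinct is actually sharper than the statement you were asked to prove. You wrote that $b(L)=0$ (equivalently $a=b(K)$) is ``essential,'' and that is correct: $(\star)$, and with it Proposition \ref{nazprop1}, genuinely fails for $a$ far from the barycenter. For $K=[-1,1]\subset\R$ one computes $\M(K-a)=\frac{4}{1-a^2}$ and, by mapping the strip $T_K$ conformally to the half-plane, $\mathcal{K}_{T_K}(\sqrt{-1}a,\sqrt{-1}a)=\frac{\pi}{16\cos^2(\pi a/2)}$; hence $\pi|K|^2\mathcal{K}_{T_K}(\sqrt{-1}a,\sqrt{-1}a)=\frac{\pi^2}{4\cos^2(\pi a/2)}$, which as $a\to1^-$ grows like $(1-a)^{-2}$ while $\M(K-a)$ grows only like $(1-a)^{-1}$, so the inequality reverses (numerically, already at $a\approx0.55$). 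Correspondingly, the paper's proof of Lemma \ref{J_K-lower_bound} invokes the inclusion $\tfrac{K}{2}+\tfrac{y+a}{2}\subset K$, which is valid when $a=0$ (then it is $\tfrac{K+y}{2}\subset K$, a midpoint of two points of $K$) but not for a general $a\in K$: take $K=[-1,1]$, $y=1$, $a=0.9$, for which $\tfrac{K}{2}+\tfrac{y+a}{2}=[0.45,1.45]\not\subset K$. The Proposition and Lemma should therefore be restricted to $a=b(K)$; since that is the only case used in the proof of Theorem \ref{nazarov_lower_bound} (see \eqref{prop1eq}), the main theorem is unaffected, but your caution about the centering hypothesis was well placed.
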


Proposition \ref{nazprop1} leads to the following functional
on convex bodies:

\begin{definition}
\label{BKDef}
For a convex body $K\subset \R^n$ let 
\begin{equation*}
    \mathcal{B}(K)\defeq |K|^2 \mathcal{K}_{T_K}(\sqrt{-1}b(K), \sqrt{-1}b(K)).
\end{equation*}
\end{definition}

An elementary new observation that is crucial for this article is:
\begin{lemma}\label{bk_ai}
$\mathcal{B}(K)$ is an affine invariant.
\end{lemma}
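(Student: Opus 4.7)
The plan is to verify that each factor in $\mathcal{B}(K)=|K|^2\mathcal{K}_{T_K}(\sqrt{-1}b(K),\sqrt{-1}b(K))$ transforms predictably under an arbitrary affine bijection $T(x)=Ax+v$ of $\R^n$, with $A\in GL(n,\R)$ and $v\in\R^n$, and to observe that the various factors of $\det A$ cancel. First, by change of variables one has $|TK|=|\det A|\,|K|$ and, applying the same substitution to the defining integral \eqref{bKEq}, $b(TK)=A\,b(K)+v$. This equivariance of the barycenter is exactly what makes Definition~\ref{BKDef} a candidate for affine invariance: the evaluation point $\sqrt{-1}b(K)$ is mapped to the analogous evaluation point for $TK$.

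Next I would identify the correct biholomorphism between the two tube domains. Since $\mathrm{int}(TK)=A\,\mathrm{int}(K)+v\subset\R^n$, the translation by $v$ must act on the imaginary part of $z$, so the map to use is
$$\Phi: T_K\longrightarrow T_{TK}, \qquad \Phi(z)\defeq Az+\sqrt{-1}\,v,$$
sending $x+\sqrt{-1}y$ to $Ax+\sqrt{-1}(Ay+v)$. It is visibly a biholomorphism with constant holomorphic Jacobian $\det J_\Phi\equiv\det A$, and in particular $\Phi(\sqrt{-1}b(K))=\sqrt{-1}(Ab(K)+v)=\sqrt{-1}b(TK)$.

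The standard transformation rule for Bergman kernels under a biholomorphism, which follows from the $L^2$-isometry $A^2(T_{TK})\to A^2(T_K)$, $f\mapsto (f\circ\Phi)\det J_\Phi$, then gives
$$\mathcal{K}_{T_K}(z,w)=\det J_\Phi(z)\,\overline{\det J_\Phi(w)}\,\mathcal{K}_{T_{TK}}(\Phi(z),\Phi(w)).$$
Evaluating at $z=w=\sqrt{-1}b(K)$ and using $\overline{\det A}=\det A$ shows that the kernel factor of $\mathcal{B}(K)$ picks up a factor $(\det A)^2$ relative to that of $\mathcal{B}(TK)$, while the volume factor picks up the compensating $|\det A|^{-2}$. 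Since $(\det A)^2=|\det A|^2$, the two cancel and $\mathcal{B}(TK)=\mathcal{B}(K)$.

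The only (mild) subtlety is noticing that the biholomorphism must be $z\mapsto Az+\sqrt{-1}v$, not the naive complexification $z\mapsto Az+v$ of $T$: only a purely imaginary translation moves the base of the tube. Once this is in place, the proof is essentially the arithmetic identity $|\det A|^2\cdot(\det A)^{-2}=1$, and one sees that the barycenter in Definition~\ref{BKDef} is present precisely so that this cancellation is possible; any other canonically chosen point in $\mathrm{int}\,K$ that is affinely equivariant would work equally well.
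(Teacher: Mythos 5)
Your proof is correct and follows essentially the same route as the paper: the authors likewise use the complex affine map $z\mapsto Az+\sqrt{-1}a$ (translating only the imaginary part), the equivariance $b(S(K))=S(b(K))$, and the Bergman kernel transformation rule (their Lemma \ref{bergman_affine_transform}, proved from the reproducing property) to get the factor $(\det A)^2=|\det A|^2$ cancelling against $|TK|^2=|\det A|^2|K|^2$. No substantive differences.
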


Proposition \ref{nazprop1} will be used 
with $a=b(K)$ \eqref{bKEq} since then
both sides of 
\begin{equation}\label{prop1eq}
    \M(K-b(K))\geq \pi^n \mathcal{B}(K)
\end{equation}
are affine invariants and, moreover, the right hand
side will be shown to have a uniform lower bound 
(Propostion \ref{nazprop2}). 
The affine invariance of the right-hand side 
is the content of Lemma \ref{bk_ai}.
To see the affine invariance 
of the left-hand side is more straightforward:
let $S(x)= Ax+ b$ for $A\in GL(n,\R), b\in \R^n$, be an affine transformation. Since $b(S(K))= S(b(K))$ (\ref{affine_barycenter}),
\begin{equation*}
    \begin{aligned}
        \M(S(K)- b(S(K)))&= \M(S(K)-S(b(K)))= \M(AK+b- Ab(K)-b)\\ &= \M(A(K-b(K)))= \M(K-b(K)),
    \end{aligned}
\end{equation*}
because $\M$ is invariant under the action of $GL(n,\R)$ and 
\begin{equation}\label{affine_barycenter}
    b(S(K))= \frac{1}{|AK+b|}\int_{AK+b} x\dif x= \frac{1}{|\det A||K|}\int_K (Ay+b) |\det A|\dif y= A b(K)+b= S(b(K)),
\end{equation}
since $A$ is linear, and hence commutes with the integral. 

Nazarov proves a special case of Proposition \ref{nazprop1} for symmetric convex bodies \cite[p. 338]{nazarov}: 
\begin{equation}\label{Nazrov-Prop6}
\M(K)\geq |K|^2 \mathcal{K}_{T_K}(0,0) \pi^n.
\end{equation}
We extend his estimate to general convex bodies 
by observing that his proof does not actually
require symmetry (which implies $b(K)=0$)
(see, e.g., Lemma \ref{J_K-lower_bound}).
A similar observation was already made by Hultgren
who derived a functional version of \eqref{Nazrov-Prop6} 
for convex functions $f$ with $\int_{\R^n} xe^{-f(x)}\dif x=0$
\cite[Lemma 11]{hultgren}. 
The proof of Proposition \ref{nazprop1} occupies \S\ref{nazsub2}.
It relies on the well-known
Paley--Wiener correspondence for tube domains that is carefully derived in \S\ref{sec3.1}. 
Lemma \ref{bk_ai} is not part of the 
proof of Proposition \ref{nazprop1} and is proved in \S\ref{sec4.4}.

\medskip
\noindent
{\it The second estimate.}
So far there is little distinction between symmetric and non-symmetric convex bodies.
The essential differentiation between the two cases comes in 
the next estimate concerning a lower bound on the Bergman kernel
on the diagonal:

\begin{proposition}\label{nazprop2}
For $K\subset \R^n$ a convex body,
\begin{equation*}
 \mathcal{B}(K)\geq 4^{-n}. 
\end{equation*}
\end{proposition}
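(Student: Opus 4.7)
The proof plan rests on the extremal characterization of the Bergman kernel on the diagonal,
\[
\mathcal{K}_{T_K}(w,w)=\sup_{0\neq f\in A^2(T_K)}\frac{|f(w)|^2}{\|f\|_{L^2(T_K)}^2},
\]
applied at $w_0=\i b(K)$. To bound $\mathcal{B}(K)=|K|^2\mathcal{K}_{T_K}(\i b(K),\i b(K))$ from below by $4^{-n}$, it suffices to exhibit one holomorphic $f\in A^2(T_K)$ with $f(\i b(K))=1$ and $\|f\|_{L^2(T_K)}^2\leq 4^n/|K|^2$, up to a subexponential-in-$n$ factor that can be removed later. By the affine invariance of $\mathcal{B}$ (Lemma \ref{bk_ai}), we are free to reposition $K$ by any element of $GL(n,\R)$ and any translation. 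The plan is to first use John's theorem, in the Santal\'o-corrected form of Lemma \ref{naz_john_lemma} (or the tensorization-based alternative of Remark \ref{avoidJohnRk}), to place $K$ so that it is sandwiched between comparable balls, and then apply the convenient displacement of \S\ref{repositionSec} so that $b(K)$ and the center of the inscribed ball are in good relative position.

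The holomorphic function $f$ is then produced by Nazarov's $\bar\partial$-method. Using the plurisubharmonic support function of \S\ref{pshsupp_sub}, I would build the weight $\varphi$ satisfying Lemma \ref{properties_list}: plurisubharmonic on $T_K$, with $i\partial\bar\partial\varphi$ bounded below on the shell where $\bar\partial\chi$ is supported, and with $e^{-\varphi}$ integrable against a factor that is controlled by $|K|$. To the weight I would add a term $2n\log|z-\i b(K)|$ so that finiteness of $\int|u|^2e^{-\varphi}d\lambda$ forces any solution $u$ to vanish at $\i b(K)$. Next, I would take the bump function $\chi$ of Lemma \ref{controlled_bump_function}, which is identically $1$ near $\i b(K)$ and supported inside the shifted shell $(\sigma\delta K_\C-(\sigma-1)\delta(a+\i a))\setminus\delta K_\C$ with controlled derivatives. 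H\"ormander's $L^2$-theorem then produces $u$ with
\[
\int_{T_K}|u|^2 e^{-\varphi}\,d\lambda\ \leq\ \int_{T_K}|\bar\partial\chi|_{i\partial\bar\partial\varphi}^2 e^{-\varphi}\,d\lambda,
\]
and $f:=\chi-u$ is holomorphic on $T_K$, satisfies $f(\i b(K))=1$, and has $L^2$ norm controlled by that of $\chi$. This is where the value of $\mathcal{B}(K)$ is extracted.

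The numerology that yields the constant $4^{-n}$ comes from the interplay of three factors: the $L^2$ mass of $\chi$ supported on a shell of relative thickness $\sigma-1$ around $\delta K_\C$, the volume scaling of $K_\C\sim(2K)$ in $\C^n$ (producing a $2^n$), and the $|K|^2$ prefactor in $\mathcal{B}(K)$. Optimizing the dilation parameters $\sigma,\delta$ in the bump and weight construction yields a bound of the form $\mathcal{B}(K)\geq C_n\cdot 4^{-n}$ with $C_n$ subexponential in $n$. The tensorization step of \S\ref{tensorSec}, which uses $\mathcal{K}_{T_{K}\times T_{K}}=\mathcal{K}_{T_K}\otimes\mathcal{K}_{T_K}$, $b(K^N)=(b(K),\ldots,b(K))$, and $|K^N|=|K|^N$, gives $\mathcal{B}(K^N)=\mathcal{B}(K)^N$; applying the subexponential estimate to $K^N$ and taking $N$-th roots as $N\to\infty$ removes $C_n$ and yields the clean $4^{-n}$. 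The main obstacle I expect is not the $\bar\partial$-machinery itself, which is standard once the ingredients are in place, but precisely the constant chasing in the non-symmetric setting: the bump is no longer centered at $0$, the shell is shifted by $(\sigma-1)\delta(a+\i a)$, and one must verify that the curvature bound from the weight still dominates $|\bar\partial\chi|^2$ pointwise on this shifted shell after the repositioning. It is exactly the affine invariance of $\mathcal{B}$, identified in Lemma \ref{bk_ai}, that makes the bookkeeping scale-free and allows the symmetric-case proof to be ported to the general case.
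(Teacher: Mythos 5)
Your proposal follows essentially the same route as the paper: the extremal characterization of the Bergman kernel (Lemma \ref{equivalent_norms}), affine invariance of $\mathcal{B}$ (Lemma \ref{bk_ai}), repositioning via John's theorem and Santal\'o's inequality (Lemma \ref{naz_john_lemma}), Nazarov's weight and bump construction, H\"ormander's estimate, and tensorization (Proposition \ref{bk_tensorization}) to strip the subexponential factor. There is, however, one genuine conceptual gap in how you handle the weight.

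You propose to \emph{add} a term $2n\log|z-\i b(K)|$ on top of the weight $\varphi$ to force solutions of the $\bar\partial$-equation to vanish at $\i b(K)$. This misreads the role of the conformal map $\Phi$. Once $K$ is repositioned so $b(K)=0$, the weight \eqref{weight_phi}, $\phi(z)=\frac{|\mathrm{Im}\,z|^2}{4n^2r^2}+2n\log\sup_{t\in K^\circ}|\Phi(\langle z,t\rangle)|$, already carries the prescribed $2n\log|z|$ singularity at the origin: by Claim \ref{bound_on_Phi}, $\log|\Phi(\zeta)|=\log|\zeta|+O(|\zeta|)$ near $0$, which gives $e^{-\phi(z)}\gtrsim |z|^{-2n}$ locally (Lemma \ref{properties_list}~(iii)). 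Adding a separate $2n\log|z|$ would double the singularity. Worse, a bare $\log|z|$ term is unbounded above on the tube $T_K$ (the real coordinate is unconstrained), whereas the $\Phi$-composed support function is bounded above by $\log 2$ precisely because $\Phi$ maps the half-plane $\{\mathrm{Im}\,\zeta\le1\}$ onto $\overline{B_2^2(0,2)}$. That upper bound $\phi\le 1+2n\log 2$ (Lemma \ref{properties_list}~(ii)) is \emph{the} source of the $4^n$ factor when one passes from the weighted H\"ormander estimate on $h$ to the unweighted $L^2$ bound \eqref{h_estimate2} needed for Lemma \ref{equivalent_norms}; your attribution of the $2^n$ to $K_\C\sim 2K$ is not quite right either. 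The lesson is that the conformal map is not cosmetic: it lets a single term serve as both pole and bounded weight, and any additive log term would destroy that balance. Also a small slip: with $f(\i b(K))=1$, the extremal characterization requires $\|f\|_{L^2(T_K)}^2\le 4^n|K|^2$ (times a subexponential factor), not $4^n/|K|^2$, to conclude $\mathcal{B}(K)\ge 4^{-n}$.
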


\begin{conjecture}
For $K\subset \R^n$ a convex body, 
$ \mathcal{B}(K) \ge  \mathcal{B}(\Delta_n)$ 
where\hfill\break $\Delta_n\defeq \{x\in [0, \infty)^n: x_1+ \ldots+ x_n\leq 1\}$ is the $n$-dimensional simplex.
\end{conjecture}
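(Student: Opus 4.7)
My plan is to treat this as an extremal problem for the affine invariant $\mathcal{B}$. Since $\mathcal{B}$ is affine invariant by Lemma \ref{bk_ai}, I may place each representative in a convenient affine position (e.g., barycenter at the origin, or John position) without loss of generality. A natural first task is to compute $\mathcal{B}(\Delta_n)$ explicitly so that the conjectured lower bound is of a concrete form.

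First, I would express $\mathcal{K}_{T_{\Delta_n}}(\sqrt{-1}b(\Delta_n),\sqrt{-1}b(\Delta_n))$ via the Paley--Wiener correspondence of \S\ref{sec3.1}, which (as exploited in the proof of Proposition \ref{nazprop1} in \S\ref{nazsub2}) represents the diagonal value of the Bergman kernel through an integral of the reciprocal of the Laplace transform $J_K(\xi)\defeq \int_K e^{-2\langle \xi, x\rangle}\dif x$. For the standard simplex $\Delta_n$, the function $J_{\Delta_n}$ is a classical divided difference, so this representation yields a closed-form (if combinatorially involved) expression for $\mathcal{B}(\Delta_n)$.

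Second, I would pursue a compactness-plus-characterization strategy. Compactness of the space of convex bodies in the Banach--Mazur metric, combined with continuity of $\mathcal{B}$ in this metric, would guarantee that the infimum of $\mathcal{B}$ is attained. The task then reduces to showing that any minimizer is affinely equivalent to $\Delta_n$. A natural attempt is to perturb a candidate minimizer near a smooth boundary point or near a vertex of low order, using the variational formula
\[
\mathcal{K}_{T_K}(z,z)=\sup\{|f(z)|^2: f\in A^2(T_K),\ \|f\|_{L^2(T_K)}\leq 1\}
\]
together with the inclusion-monotonicity of Bergman kernels, to produce a competitor with strictly smaller $\mathcal{B}$ unless $K$ is a simplex. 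Alternatively, one could try to adapt Berndtsson-type plurisubharmonicity and Brunn--Minkowski inequalities for fibered Bergman kernels to the family of tube domains over convex bodies, which should constrain the extremizer.

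The main obstacle is that this conjecture is presumably at least as difficult as the non-symmetric Mahler conjecture \eqref{bm_int_ns}. Indeed, by Proposition \ref{nazprop1}, any sharp lower bound $\mathcal{B}(K)\geq \mathcal{B}(\Delta_n)$ directly yields a lower bound on $\M(K-b(K))$ of the form $\pi^n \mathcal{B}(\Delta_n)$, which, up to the gap in Proposition \ref{nazprop1} at $K=\Delta_n$, is comparable to $(n+1)^{n+1}/n!$. Consequently, the decisive step---ruling out all non-simplicial extremizers---appears to demand essentially new ideas beyond the $L^2$ and Bergman kernel techniques developed in this article, and I would expect partial progress (say, in low dimension, or within the class of polytopes with few vertices) to be the most realistic short-term goal.
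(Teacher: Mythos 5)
The statement you were given is labeled a \emph{Conjecture} in the paper, and the paper does not prove it; the authors only establish the weaker, non-sharp bound $\mathcal{B}(K)\ge 4^{-n}$ (Proposition \ref{nazprop2}), via John/Santal\'o repositioning, H\"ormander's $L^2$ estimates, and a tensorization step. There is therefore no ``paper's proof'' to match your attempt against, and you correctly treat the statement as open: your write-up is a strategy outline, not a proof, and you explicitly flag that the decisive step (ruling out all non-simplicial extremizers) is missing. That self-assessment is accurate.

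A few remarks on the outline itself. The Paley--Wiener representation you invoke is the right starting point for computing $\mathcal{B}(\Delta_n)$: by (\ref{nazeq121}), $\mathcal{B}(\Delta_n)= (2\pi)^{-n}\int_{\R^n}e^{-\tilde h_{\Delta_n-b(\Delta_n)}(-2x)}\,\dif x$, and $e^{\tilde h_{\Delta_n}}$ is an explicit divided-difference type Laplace transform, so this is computable in principle (and recovers $\pi/4$ when $n=1$). The compactness-plus-characterization plan, however, has two unaddressed technical points: first, the Banach--Mazur compactness theorem needs to be phrased for general (not necessarily symmetric) bodies modulo all affine maps, and second, continuity of $K\mapsto \mathcal{B}(K)$ on that quotient is not established anywhere in the paper and would need a separate argument. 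Finally, your claim that the conjecture is ``presumably at least as difficult as the non-symmetric Mahler conjecture'' overstates the link: Proposition \ref{nazprop1} passes from $\M(K-b(K))$ to $\pi^n\mathcal{B}(K)$ only after the Jensen step of Lemma \ref{J_K-lower_bound}, which loses a factor of $2^n$, so a sharp lower bound $\mathcal{B}(K)\ge\mathcal{B}(\Delta_n)$ would \emph{not} directly yield the sharp Mahler bound $(n+1)^{n+1}/n!$ (already in $n=1$, $\pi\mathcal{B}(\Delta_1)=\pi^2/4<4=\M(\Delta_1-s(\Delta_1))$). The conjecture on $\mathcal{B}$ and the non-symmetric Mahler conjecture are kindred but not equivalent, and the difficulty comparison should be qualified accordingly.
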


\begin{remark}
Nazarov's analogue of Proposition \ref{nazprop2}
for symmetric bodies is
\begin{equation}\label{naz_KC}
     \mathcal{K}_{T_K}(0,0)\geq \left( \frac{\pi^2}{16}\right)^n/|K_\C|, 
\end{equation}
where $|K_\C|\leq |K|^2$ as in (\ref{KC}) \cite[p. 341]{nazarov}. 
While \eqref{naz_KC} is sharp \cite[p. 342]{nazarov},
if one were to replace $|K_\C|$ by $|K|^2$ it would no longer be.
For us, Proposition \ref{nazprop2} is not sharp and would not
be sharp even if  $|K|^2$ were replaced by $|K_\C|$
(which is possible by Lemma \ref{g_estimate}). Indeed, in dimension $n=1$,  $K= (-1/2, 1/2)$, $\mathcal{K}_{T_K}(0,0)= \pi/4$ and $|K|=1$ \cite[p. 342]{nazarov}.
Thus, by affine invariance (Lemma \ref{bk_ai})
\begin{equation*}
    |K|^2\mathcal{K}_{T_K}(0,0)= \frac{\pi}{4}> \frac14, 
\end{equation*}
for all symmetric intervals $K$ in $\R$ (i.e., $K\subset \R$ with $b(K)=0$). Moreover,  if $\mathcal{B}(K)$ were to be replaced by $|K_\C|\mathcal{K}_{T_K}(\sqrt{-1}b(K), \sqrt{-1}b(K))$ in Proposition \ref{nazprop2}, the estimate would still not be sharp since in dimension $n=1$ the estimate (\ref{naz_KC}) is sharp \cite[p. 342]{nazarov}. 
\end{remark}
\begin{remark}
B\l{}ocki conjectured that for symmetric convex bodies
$
    \mathcal{K}_{T_K}(0,0)\geq \left( \frac{\pi}{4}\right)^n/|K|^2,
$
attained for the cube $[-1,1]^n$ (that would imply 
\eqref{bm_int} with $c=\pi^2/4$
for symmetric convex bodies) \cite[(7)]{blocki2}. 
\end{remark}

\begin{proof}[Proof of Theorem \ref{nazarov_lower_bound}]
For convex bodies $K\subset \R^n$ with $b(K)=0$, the claim follows from Propositions \ref{nazprop1} and \ref{nazprop2}. In general, for any convex body $K\subset \R^n$ the volume product
\begin{equation}\label{naz_eq12}
    \inf_{z\in \R^n}\M(K-z)= \M(K-s(K))
\end{equation}
is {\it minimized} at a unique point $s(K)\in \mathrm{int}\,K$ (called the Santal\'o point) for which $b((K-s(K))^\circ)=0$ \cite[(2.3)]{santalo}. 
The Mahler volume of the translated body $K-s(K)$
equals that  of its polar $(K-s(K))^\circ$, and the
latter is bounded from below by $(\pi/4)^n$ as its barycenter is at the origin.
\end{proof}

\section{Estimating the Mahler volume}\label{sec3}
\label{nazsub1}

This section culminates in \S\ref{nazsub2} in the proof of Proposition \ref{nazprop1}
that states a lower bound for the Mahler volume in terms of a Bergman kernel.
Since the former can be expressed as an integral involving the support function
$h_K$ \eqref{h_K-def} (Claim \ref{integral_polar_support}),
the gist of the proof is to recognize that the support function 
has an ``$L^1$-cousin" in the form of a functional $\tilde{h}_K$ (Definition \ref{tildehKDef}), 
that this cousin actually can bound its ``$L^\infty$-cousin" $h_K$
(Lemma \ref{J_K-lower_bound}), and that this $\tilde{h}_K$, 
in turn,
is closely related to the Bergman kernel of the tube domain $T_K$ over $K$.

\subsection{A Paley-Wiener correspondence}\label{sec3.1}

The main result of this subsection is 
the Paley--Wiener correspondence, Theorem \ref{pw-thm},
and we mainly follow  Berndtsson \cite[Proposition 3.1]{berndtsson2}, B\l{}ocki \cite[Section 3]{blocki}, Hultgren \cite[Chapter 4]{hultgren}, and Nazarov \cite[Section 3]{nazarov},
but add more detail as needed.

\subsubsection{The Paley--Wiener map}

The Mahler volume naturally involves the support function 
\begin{equation}\label{h_K-def}
    h_K(y)\defeq \sup_{x\in K}\langle x,y\rangle. 
\end{equation}
The key idea relating $\M(K)$ to $\mathcal{K}_{T_K}$ is an $L^1$ analogue.

\begin{definition}
\label{tildehKDef}
For $K\subset\R^n$ a compact body, let
\begin{equation*}
    \tilde h_K(x)\defeq \log\frac1{|K|}\int_{K} e^{\langle x,y\rangle}\dif y,
\end{equation*}
and denote by $L^2(\R^n, \tilde h_K)$ the class of functions $g:\R^n\to \R$ such that
\begin{equation*}
    \|g\|_{L^2(\tilde{h}_K)}\defeq \left(|K|\int_{\R^n}|g(x)|^2 
    e^{\tilde h_K(-2x)}
    \dif x\right)^\frac12<\infty. 
\end{equation*}
\end{definition}

By compactness of $K$, $h_K\ge \tilde h_K$. 
A key observation is that convexity yields a reverse inequality
(Lemma \ref{J_K-lower_bound}). 
Proposition \ref{nazprop1} then readily follows
since the Bergman kernel is closely
related to $\tilde h_K$ by a classical formula 
(that can be justified by Theorem \ref{pw-thm}).

\begin{remark}
Nazarov \cite[p. 337]{nazarov} (and B\l{}ocki \cite[p. 93]{blocki})
define
$
    J_K(x)\defeq \int_{K} e^{-2\langle x,y\rangle}\dif y, 
$
and work with the class $L^2(\R^n, J_K)$ of weighted $L^2$-integrable functions $g: \R^n\to \R$ such that
$    \|g\|_{L^2(J_K)}\defeq \left( \int_{\R^n} |g(x)|^2 J_K(x)\dif x \right)<\infty. 
$
Since $J_K(x)= |K|e^{\tilde{h}_K(-2x)}$, $L^2(\R^n, J_K)= L^2(\R^n, \tilde{h}_K)$. 
Working with $\tilde h_K$ makes some of the key estimates more 
intuitive geometrically (e.g., Lemma \ref{J_K-lower_bound}). 

\end{remark}

The following Paley--Wiener type theorem establishes an integral representation of the elements of $A^2(T_K)$ in terms
of elements of $L^2(\R^n, \tilde{h}_K)$. Define a map $\mathrm{PW}$ sending functions in $L^2(\R^n, \tilde{h}_K)$ to functions on $T_K$,
\begin{equation}\label{pw_def}
    \begin{gathered}
        L^2(\R^n, \tilde{h}_K)\ni g\mapsto 
        \mathrm{PW}(g)(w)\defeq\frac{1}{(2\pi)^\frac{n}{2}}\int_{\R^n} g(x)e^{\sqrt{-1}\langle w,x\rangle}\dif x, \quad w\in T_K.
    \end{gathered}
\end{equation}

\begin{theorem}\label{pw-thm}
$\mathrm{PW}$
is a bijection between $L^2(\R^n, \tilde{h}_K)$ and $A^2(T_K)$, with
\hfill\break $\|\mathrm{PW}(\,\cdot\,)\|_{L^2(T_K)}=  \|\,\cdot\,\|_{L^2(\tilde{h}_K)}$.
\end{theorem}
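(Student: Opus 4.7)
The strategy is to identify $\mathrm{PW}$, on each horizontal slice $\R^n + \sqrt{-1}v$ with $v \in \mathrm{int}\,K$, as (up to normalization) the Fourier transform in $x$ of $g_v(x) \defeq g(x) e^{-\langle v, x\rangle}$: writing $w = u + \sqrt{-1}v$,
\[
\mathrm{PW}(g)(u + \sqrt{-1}v) = (2\pi)^{-n/2}\int_{\R^n} g_v(x) e^{\sqrt{-1}\langle u, x\rangle}\dif x.
\]
The theorem then reduces to Plancherel applied slicewise in $v$, combined with Fubini.

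\textbf{Forward direction and isometry.} The key estimate is that for $v \in \mathrm{int}\,K$ and any $\delta > 0$ with $v + \delta B_2^n(0,1) \subset K$, restricting the integral defining $J_K$ to this ball and substituting $y = v + \delta z$ gives
\[
J_K(x) \defeq \int_K e^{-2\langle y, x\rangle} \dif y \geq \delta^n e^{-2\langle v, x\rangle}\int_{B_2^n(0,1)} e^{-2\delta\langle z, x\rangle} \dif z \geq c\, e^{-2\langle v, x\rangle},
\]
since $e^{-2\delta\langle z, x\rangle} \geq 1$ on the hemisphere $\langle z, x\rangle \leq 0$, uniformly on compacta of $\mathrm{int}\,K$. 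Concentrating the same restricted integral in the direction $-x/|x|$ yields the sharper Laplace-style bound $J_K(x) \geq c\,|x|^{-(n+1)/2}\, e^{-2\langle v, x\rangle + 2\delta |x|}$ for large $|x|$, and Cauchy--Schwarz against the weight $J_K$ then gives $g_v \in L^1(\R^n) \cap L^2(\R^n)$ with norms locally uniform in $v$. This justifies absolute convergence of $\mathrm{PW}(g)(w)$ and differentiation under the integral in $w$, so $\mathrm{PW}(g)$ is holomorphic on $T_K$. Slicewise Plancherel together with Fubini in $v$ then produces
\[
\|\mathrm{PW}(g)\|_{L^2(T_K)}^2 = \int_K\!\int_{\R^n}|g(x)|^2 e^{-2\langle v, x\rangle}\dif x \dif v = \int_{\R^n}|g(x)|^2 J_K(x)\dif x = \|g\|_{L^2(\tilde h_K)}^2,
\]
confirming simultaneously that $\mathrm{PW}(g) \in A^2(T_K)$, the claimed norm identity, and injectivity of $\mathrm{PW}$.

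\textbf{Surjectivity and the main obstacle.} Given $f \in A^2(T_K)$, Fubini yields $f(\cdot + \sqrt{-1}v) \in L^2(\R^n)$ for almost every $v \in \mathrm{int}\,K$; let $G_v$ denote its $L^2$-inverse Fourier transform. The Paley--Wiener claim is that $g(x) \defeq e^{\langle v, x\rangle} G_v(x)$ is, modulo null sets, independent of the choice of $v$. Heuristically this is Cauchy's theorem applied one complex coordinate at a time: deforming the contour from $\R^n + \sqrt{-1}v$ to $\R^n + \sqrt{-1}v'$ multiplies the Fourier transform by exactly $e^{-\langle v' - v, x\rangle}$, which is absorbed by the exponential prefactor. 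The main obstacle is making this rigorous for merely $L^2$ data: a direct contour deformation on $f$ is not legitimate. I would resolve this by first truncating $f$ with a Gaussian cutoff of width $R$ in $u$, so the truncated function is Schwartz on each slice and classical one-variable contour deformation applies, then passing $R \to \infty$ using the locally uniform $L^2$-control from the forward direction. Once $g$ is well defined, reading the isometry computation above in reverse immediately gives $g \in L^2(\tilde h_K)$ with $\mathrm{PW}(g) = f$, completing the proof.
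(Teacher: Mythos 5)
Your forward direction tracks the paper's: the isometry is the same slicewise Plancherel plus Tonelli computation, the paper's lower bound on $J_K$ is obtained by restricting to a cube inside $K$ and computing a $\sinh$ integral (Lemma~\ref{sinh_lemma}) whereas you restrict to a ball and argue via a hemisphere plus a Laplace-type tail bound, and the paper proves holomorphicity of $\mathrm{PW}(g)$ via Morera's theorem after a continuity lemma while you propose differentiation under the integral. These choices are interchangeable. Your surjectivity strategy, however, is genuinely different from the paper's: the paper convolves $f$ with a mollifier $\eta_\e$ in the real direction so that $\widehat{(f_\e)_y}$ is compactly supported on every slice (Lemma~\ref{dense_in_A^2}), picks a single slice $y_0$ to build the candidate $g$, and finishes by analytic continuation; you instead regularize $f$ to be Schwartz on all slices, verify slice-independence of $e^{\langle v,\cdot\rangle}\widehat{f_v}$ by Cauchy--Riemann (your ``contour deformation''), and pass $R\to\infty$ using the isometry. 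Both are legitimate Paley--Wiener routes.

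There is, however, a genuine gap in your regularization step. A ``Gaussian cutoff of width $R$ in $u$'', read as multiplication by $e^{-|u|^2/R^2}$, is \emph{not} holomorphic in $w=u+\sqrt{-1}v$: once $f$ is multiplied by it the Cauchy--Riemann equations fail, and your slice-independence step no longer applies. Rapid decay on slices alone is not enough; holomorphicity of the truncated function is what lets you move the contour. The fix is to use a \emph{holomorphic} truncation, e.g.\ multiplication by $e^{-\sum_j w_j^2/R^2}$, whose modulus $e^{-(|u|^2-|v|^2)/R^2}$ still decays Gaussianly in $u$ uniformly for $v$ in any compact subset of $\mathrm{int}\,K$, and whose derivatives on slices are controlled by Cauchy estimates (so the product is indeed Schwartz slicewise). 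With that fix $f_R$ is holomorphic, $f_R\to f$ in $L^2(T_K)$ by dominated convergence, the isometry makes the corresponding $g_R$ Cauchy in $L^2(\R^n,\tilde h_K)$, and $\mathrm{PW}(g)=f$ for the limit $g$. Note that the paper sidesteps this issue entirely because its mollification is a \emph{convolution} in the real direction (replacing $w$ by $w-u$ with $u\in\R^n$), which preserves holomorphicity automatically---though establishing this is itself delicate and is precisely the content of Lemma~\ref{bob_intermediate_lemma}.
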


Theorem \ref{pw-thm} 
establishes not just an integral
representation, but one that
is also an isometry between
the respective Hilbert space
structures.
Theorem \ref{pw-thm} 
is well-documented, see, e.g., \cite[\S 3]{berndtsson2}, \cite[\S 3]{blocki}, \cite[\S 4]{hultgren},
but the details are scattered and often left to the reader
so we provide a streamlined proof for the reader's convenience.
Theorem \ref{pw-thm} follows from the following four propositions. 

\begin{proposition}\label{pw-prop1}
For $g\in L^2(\R^n, \tilde{h}_K)$ and $w\in T_K$ the integral $\int_{\R^n} g(x)e^{\sqrt{-1}\langle w,x\rangle}\dif x$ converges in $\C$.
\end{proposition}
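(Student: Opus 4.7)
The plan is to establish absolute convergence of the integral, i.e., finiteness of $\int_{\R^n}|g(x)|\,|e^{\sqrt{-1}\langle w,x\rangle}|\,\dif x$. Writing $w=u+\sqrt{-1}v$ with $u\in\R^n$ and $v\in\mathrm{int}\,K$ (by the definition of $T_K$ in \eqref{TKEq}), the modulus of the complex exponential simplifies to $e^{-\langle v,x\rangle}$, so the task reduces to showing that $|g(x)|\,e^{-\langle v,x\rangle}$ is integrable on $\R^n$.

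The natural next step is to apply Cauchy--Schwarz with the weight $e^{\tilde h_K(-2x)}$ that defines the norm on $L^2(\R^n,\tilde h_K)$. Splitting
$$|g(x)|e^{-\langle v,x\rangle}=\bigl[|g(x)|\,e^{\tilde h_K(-2x)/2}\bigr]\cdot\bigl[e^{-\langle v,x\rangle-\tilde h_K(-2x)/2}\bigr]$$
and applying Cauchy--Schwarz yields
$$\int_{\R^n}|g(x)|e^{-\langle v,x\rangle}\,\dif x\le |K|^{-1/2}\|g\|_{L^2(\tilde h_K)}\cdot\left(\int_{\R^n}e^{-2\langle v,x\rangle-\tilde h_K(-2x)}\,\dif x\right)^{1/2}.$$
The first factor is finite by hypothesis, so everything reduces to showing that the remaining integral converges. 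This is the main step.

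To handle it, I would exploit the interiority of $v$ in $K$. Choosing $\delta>0$ with $B_2^n(v,\delta)\subset K$, one may bound the defining integral of $\tilde h_K$ from below by restricting to this ball and translating:
$$e^{\tilde h_K(-2x)}=\frac{1}{|K|}\int_K e^{-2\langle x,y\rangle}\dif y\;\ge\; \frac{e^{-2\langle x,v\rangle}}{|K|}\int_{B_2^n(0,\delta)}e^{-2\langle x,z\rangle}\dif z.$$
Using the symmetry $z\mapsto -z$, the last integral equals $\int_{B_2^n(0,\delta)}\cosh(2\langle x,z\rangle)\,\dif z$, and restricting to the half-ball $\{z\in B_2^n(0,\delta):\langle z,x/|x|\rangle\ge\delta/2\}$ (whose volume is $c_n\delta^n$ for a dimensional constant $c_n>0$, by rotational invariance) produces a lower bound of order $e^{\delta|x|}$. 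Feeding this back gives
$$e^{-2\langle v,x\rangle-\tilde h_K(-2x)}\le C_{K,\delta}\,e^{-\delta|x|},$$
which is integrable on $\R^n$.

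The main obstacle is really just this last tail estimate, and as the sketch shows it amounts to a standard spherical-cap volume computation combined with the elementary inequality $\cosh(t)\ge \tfrac12 e^{|t|}$; crucially, it is the strict interiority $v\in\mathrm{int}\,K$ (rather than $v\in K$) that produces the exponential decay $e^{-\delta|x|}$ needed for integrability. Once this is in place, the proposition follows by tracing the chain of inequalities back.
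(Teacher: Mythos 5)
Your proof is correct and follows the same broad strategy as the paper's: write $w=u+\sqrt{-1}v$ with $v\in\mathrm{int}\,K$, split via Cauchy--Schwarz against the weight $e^{\tilde h_K(-2x)}$, and then control the auxiliary integral $\int_{\R^n} e^{-2\langle v,x\rangle-\tilde h_K(-2x)}\dif x$ by exploiting the strict interiority of $v$. Where you diverge is in the tail estimate: you inscribe a ball $B_2^n(v,\delta)\subset K$ and pass to a spherical cap to obtain the exponential lower bound $|K|e^{\tilde h_K(-2x)}\geq c_n\delta^n\, e^{-2\langle v,x\rangle}e^{\delta|x|}$, whereas the paper inscribes a cube $v+[-r,r]^n\subset\mathrm{int}\,K$, which factors coordinatewise as in \eqref{1.1nazeq2} to give $|K|e^{\tilde h_K(-2x)}\geq e^{-2\langle v,x\rangle}\prod_{i=1}^n \sinh(2rx_i)/x_i$; a separate explicit computation (Lemma~\ref{sinh_lemma}) then yields the closed form $(\pi^2/8r^2)^n$ in \eqref{nazeq1}. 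Your ball-and-cap route is arguably more elementary for the present proposition, since $e^{-\delta|x|}$ is obviously integrable without any special-function identity. The cube-and-$\sinh$ route, however, pays off downstream: the explicit constant $(\pi^2/8r^2)^n$ and its $L^2$ analogue (Remark~\ref{l2condition}) are reused in the continuity argument (Lemma~\ref{cont-lemma}), the Morera estimate in Proposition~\ref{pw-prop3}, and the Bergman-kernel formula (Lemma~\ref{bk-tube}), so the paper gets more mileage out of the sharper calculation. Two small points: the region $\{z\in B_2^n(0,\delta):\langle z,x/|x|\rangle\geq\delta/2\}$ is a spherical cap, not a half-ball (the rotational-invariance argument for its volume $c_n\delta^n$ is nonetheless correct), and you should note that the case $x=0$ is handled trivially since then the integrand equals $e^{-\tilde h_K(0)}=1$.
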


\begin{proposition}\label{pw-prop2}
$\|\mathrm{PW}(\cdot)\|_{L^2(T_K)}= \|\cdot\|_{L^2(\tilde{h}_K)}$.
\end{proposition}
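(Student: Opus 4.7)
The plan is to recognize $\mathrm{PW}(g)$ on a horizontal slice $\R^n + \sqrt{-1}v$ as (up to a sign convention in the exponent) the Fourier transform of the weighted function $g(x)e^{-\langle v,x\rangle}$, then apply Plancherel slice-by-slice and Fubini to integrate over the imaginary part $v \in \mathrm{int}\,K$. The weight $e^{\tilde h_K(-2x)}$ appears as the fiber integral of $e^{-2\langle v,x\rangle}$ over $v \in K$, which is exactly what Definition \ref{tildehKDef} gives.

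More concretely, for $w = u + \sqrt{-1}v$ with $u\in\R^n$ and $v\in\mathrm{int}\,K$, write
\begin{equation*}
\mathrm{PW}(g)(u+\sqrt{-1}v) = \frac{1}{(2\pi)^{n/2}}\int_{\R^n} \bigl(g(x)e^{-\langle v,x\rangle}\bigr)\, e^{\sqrt{-1}\langle u,x\rangle}\,\dif x,
\end{equation*}
so that $u\mapsto \mathrm{PW}(g)(u+\sqrt{-1}v)$ is (the inverse) Fourier transform of $x\mapsto g(x)e^{-\langle v,x\rangle}$ with the Plancherel-compatible normalization $(2\pi)^{-n/2}$. First, I would unpack the definition of $\|g\|_{L^2(\tilde h_K)}$ using $e^{\tilde h_K(-2x)} = \tfrac{1}{|K|}\int_K e^{-2\langle v,x\rangle}\dif v$ and Tonelli (the integrand is nonnegative) to get
\begin{equation*}
\|g\|_{L^2(\tilde h_K)}^2 \;=\; \int_K\!\int_{\R^n} |g(x)|^2 e^{-2\langle v,x\rangle}\,\dif x\,\dif v.
\end{equation*}
This simultaneously shows that for a.e.\ $v \in K$ the function $x\mapsto g(x)e^{-\langle v,x\rangle}$ lies in $L^2(\R^n)$, which is the integrability hypothesis needed to invoke Plancherel on that slice.

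Next, applying Plancherel's theorem to each such slice gives
\begin{equation*}
\int_{\R^n} |\mathrm{PW}(g)(u+\sqrt{-1}v)|^2\,\dif u \;=\; \int_{\R^n} |g(x)|^2 e^{-2\langle v,x\rangle}\,\dif x
\end{equation*}
for a.e.\ $v\in \mathrm{int}\,K$. I would then use Fubini/Tonelli once more and the parametrization $T_K = \R^n + \sqrt{-1}\,\mathrm{int}\,K$, noting $|\partial K| = 0$, to integrate in $v$ over $K$ and conclude
\begin{equation*}
\|\mathrm{PW}(g)\|_{L^2(T_K)}^2 \;=\; \int_K\!\int_{\R^n} |g(x)|^2 e^{-2\langle v,x\rangle}\,\dif x\,\dif v \;=\; \|g\|_{L^2(\tilde h_K)}^2.
\end{equation*}

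There is no real obstacle here beyond bookkeeping: the holomorphicity of $\mathrm{PW}(g)$ and the convergence of the defining integral are provided by Proposition \ref{pw-prop1}, so the only things to check carefully are (i) that the Plancherel normalization matches the $(2\pi)^{-n/2}$ in \eqref{pw_def} and the sign of the exponent is harmless (Plancherel is insensitive to complex conjugation of the kernel), and (ii) that Fubini is legitimately applicable, which is immediate from the nonnegativity of $|g|^2 e^{-2\langle v,x\rangle}$ via Tonelli and the finiteness of $\|g\|_{L^2(\tilde h_K)}^2$.
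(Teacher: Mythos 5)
Your proposal is correct and follows essentially the same route as the paper: identify the slice $u\mapsto \mathrm{PW}(g)(u+\sqrt{-1}v)$ as the (suitably normalized) Fourier transform of $x\mapsto g(x)e^{-\langle v,x\rangle}$, apply Plancherel slice-by-slice, and integrate over $v\in\mathrm{int}\,K$ by Tonelli to recover the weight $e^{\tilde h_K(-2x)}$. The only cosmetic difference is that you obtain the slice $L^2$-integrability for a.e.\ $v$ from the finiteness of $\|g\|_{L^2(\tilde h_K)}$ via Tonelli, whereas the paper proves it for every $v\in\mathrm{int}\,K$ via the $\sinh$ lower bound (Lemma \ref{tgtL2Lemma}); both suffice.
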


\begin{proposition}\label{pw-prop3}
$\mathrm{PW}(L^2(\R^n, \tilde{h}_K))\subset A^2(T_K)$.
\end{proposition}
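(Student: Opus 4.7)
The plan is to leverage what is already in hand: Proposition \ref{pw-prop2} gives $\|\mathrm{PW}(g)\|_{L^2(T_K)}=\|g\|_{L^2(\tilde h_K)}<\infty$, so the only remaining content is that $\mathrm{PW}(g)$ is holomorphic on $T_K$. I would show this by differentiating under the integral sign: for each fixed $x\in\R^n$ the integrand $g(x)e^{\sqrt{-1}\langle w,x\rangle}$ is entire in $w$, so its Wirtinger derivatives $\partial_{\bar w_j}$ vanish identically; once we pass $\partial_{\bar w_j}$ through the integral we get $\partial_{\bar w_j}\mathrm{PW}(g)\equiv 0$. Equivalently one can use Morera's theorem plus Fubini on a triangular contour in each $w_j$-coordinate.

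The technical content lies in producing an integrable dominating majorant as $w$ varies in a compact neighborhood of an arbitrary fixed $w_0=u_0+\sqrt{-1}v_0\in T_K$. Choose a closed polydisc $P$ around $w_0$ small enough that the imaginary parts $v$ of $w\in P$ all lie in a fixed compact $V\subset\mathrm{int}\,K$. Then
\[
|g(x)e^{\sqrt{-1}\langle w,x\rangle}|=|g(x)|\,e^{-\langle v,x\rangle}\leq |g(x)|\,e^{h_{-V}(x)},
\]
where $h_{-V}(x)\defeq\sup_{v\in V}(-\langle v,x\rangle)$, and Cauchy--Schwarz against the weight $e^{\tilde h_K(-2x)}$ gives
\[
\int_{\R^n}|g(x)|e^{h_{-V}(x)}\dif x\leq |K|^{-1/2}\|g\|_{L^2(\tilde h_K)}\left(\int_{\R^n}e^{2h_{-V}(x)-\tilde h_K(-2x)}\dif x\right)^{1/2}.
\]
Shrinking $V$ slightly so that $V+\delta B_2^n\subset\mathrm{int}\,K$ for some $\delta>0$ yields the pointwise bound $h_{-V}(x)\leq h_K(-x)-\delta|x|$, while a Laplace-type lower bound of the shape $\int_K e^{-2\langle x,y\rangle}\dif y\geq c(1+|x|)^{-n}e^{2h_K(-x)}$ (obtained by restricting the integral to a small ball near the point of $K$ where $y\mapsto-\langle x,y\rangle$ is maximized, using convexity) produces a matching estimate for $\tilde h_K(-2x)$. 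Combining the two shows the right-hand factor is finite.

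With this uniform majorant on $P$, dominated convergence justifies the interchange of $\partial_{\bar w_j}$ with the $x$-integral; since $w_0\in T_K$ was arbitrary we obtain holomorphicity throughout $T_K$, and together with Proposition \ref{pw-prop2} this gives $\mathrm{PW}(g)\in A^2(T_K)$. The main obstacle, such as it is, is the Laplace-type lower bound on $\int_K e^{-2\langle x,y\rangle}\dif y$: this is the one step where convexity of $K$ enters in a non-trivial way, and it is likely an ingredient already established in the proof of Proposition \ref{pw-prop1} that can be quoted rather than reproved.
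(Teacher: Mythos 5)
Your approach differs from the paper's in two ways, one of substance and one of technique. The paper first proves $\mathrm{PW}(g)$ is continuous (Lemma~\ref{cont-lemma}) and then applies Morera's theorem, justifying the Fubini swap via Cauchy--Schwarz and the bound~(\ref{nazeq1}); you instead propose to differentiate under the integral sign, only mentioning the Morera route as an aside. Both routes are standard and lead to the same conclusion.

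Two remarks on your argument. First, there is a small but real gap in the differentiation-under-the-integral step: the majorant you construct, $|g(x)|e^{h_{-V}(x)}$, dominates the \emph{integrand}, but the dominated-convergence version of differentiating under the integral sign (the Wirtinger derivative $\partial_{\bar w_j}$ is a combination of the real partials $\partial_{u_j},\partial_{v_j}$) requires a majorant for the \emph{partial derivatives} of the integrand, which acquire an extra factor $|x_j|$. This is harmless—your bound $h_{-V}(x)\leq h_K(-x)-\delta|x|$ gives exponential decay that absorbs any polynomial factor—but it needs to be said; alternatively, the majorant you have is exactly what Fubini needs if you follow the Morera route you mention, which is what the paper does.

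Second, your Laplace-type lower bound $\int_K e^{-2\langle x,y\rangle}\dif y\geq c(1+|x|)^{-n}e^{2h_K(-x)}$ is correct (restrict to $K\cap B(y^*,1/|x|)$ where $y^*$ maximizes $-\langle\cdot,x\rangle$ on $K$, and use that a convex body contains a cone of fixed opening angle at every boundary point), but it is heavier machinery than what the paper already has in stock. The cube estimate~(\ref{1.1nazeq2}), established in the proof of Proposition~\ref{pw-prop1}, gives for any compact $V\subset\mathrm{int}\,K$ an $r>0$ with $V+[-r,r]^n\subset\mathrm{int}\,K$, and then $|K|e^{\tilde h_K(-2x)}\geq e^{-2\langle x,v\rangle}\prod_i\sinh(2rx_i)/x_i$ for every $v\in V$; taking the supremum over $v$ yields directly
\begin{equation*}
\int_{\R^n} e^{2h_{-V}(x)-\tilde h_K(-2x)}\dif x\leq |K|\left(\frac{\pi^2}{8r^2}\right)^n
\end{equation*}
by Lemma~\ref{sinh_lemma}. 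So the ingredient you suspected was ``already established'' is indeed there, and it does the job more cleanly—your instinct to quote it rather than reprove it was the right one.
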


\begin{proposition}\label{pw-prop4}
$A^2(T_K)\subset \mathrm{PW}(L^2(\R^n, \tilde{h}_K))$.
\end{proposition}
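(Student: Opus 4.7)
The plan is to construct, for each $f\in A^2(T_K)$, a function $g\in L^2(\R^n,\tilde h_K)$ with $\mathrm{PW}(g)=f$, inverting the Paley--Wiener map. Motivated by \eqref{pw_def}, for fixed $y\in\mathrm{int}\,K$ the slice $f_y(x)\defeq f(x+\sqrt{-1}y)$ should be the inverse Fourier transform in $x$ of $\xi\mapsto g(\xi)e^{-\langle y,\xi\rangle}$, so one expects to recover $g$ via the Fourier--Laplace formula $g(\xi) = e^{\langle y,\xi\rangle}\hat f_y(\xi)$, where $\hat f_y$ denotes the usual Fourier transform of $f_y$.

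First, by Fubini together with $\|f\|_{L^2(T_K)}<\infty$, for a.e.\ $y\in\mathrm{int}\,K$ the slice $f_y$ lies in $L^2(\R^n,\dif x)$, so $\hat f_y\in L^2(\R^n)$ is well defined; set $g_y(\xi)\defeq e^{\langle y,\xi\rangle}\hat f_y(\xi)$, a priori a function of both $\xi$ and $y$.

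The crux of the argument, and the main obstacle, is showing that $g_y$ is in fact independent of $y$. Formally, this follows by differentiation under the integral: the Cauchy--Riemann equations $\partial_{y_j}f=\sqrt{-1}\partial_{x_j}f$, combined with integration by parts in $x_j$, give $\partial_{y_j}\hat f_y(\xi) = -\xi_j \hat f_y(\xi)$, whence $\partial_{y_j}\bigl[e^{\langle y,\xi\rangle}\hat f_y(\xi)\bigr]=0$. Justifying this interchange and the vanishing of boundary terms at infinity is the technical heart of the proof, since mere $L^2$-integrability of $f$ does not supply the required pointwise decay in $x$. I would handle this by a standard regularization---for example, replacing $f$ by $f_\e(z)\defeq f(z)e^{-\e z^2}$, which remains holomorphic on $T_K$, retains $L^2$-control (because $K$ is bounded so that $|e^{-\e z^2}|$ is uniformly controlled on $T_K$), and whose slices enjoy Gaussian decay in $x$---then carrying out the formal calculation for $f_\e$ and passing to the limit $\e\to 0^+$ using Plancherel and dominated convergence. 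Alternatively one may invoke Cauchy's theorem in each coordinate to shift contours from $\R$ to $\R+\sqrt{-1}(y'_j-y_j)$, using $L^2$-averaging across a family of radii to control the boundary contributions at infinity.

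With $y$-independence established, set $g\defeq g_y$ for any admissible $y$. Applying Plancherel slice-by-slice and then Fubini,
\begin{equation*}
\|f\|_{L^2(T_K)}^2 = \int_K\!\int_{\R^n}|\hat f_y(\xi)|^2\dif\xi\,\dif y = \int_{\R^n}|g(\xi)|^2\int_K e^{-2\langle y,\xi\rangle}\dif y\,\dif\xi = \|g\|_{L^2(\tilde h_K)}^2,
\end{equation*}
where the last equality is just the definition of $\tilde h_K$. This shows $g\in L^2(\R^n,\tilde h_K)$. Finally, Fourier inversion gives $\mathrm{PW}(g)(x+\sqrt{-1}y) = f_y(x)$ for a.e.\ $(x,y)$, and continuity (indeed holomorphicity) of both sides upgrades this to pointwise equality on $T_K$, completing the proof.
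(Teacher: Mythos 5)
Your proposal takes a genuinely different route from the paper's, and is in fact essentially the argument discussed (and deliberately set aside) in the remark immediately following the paper's own proof of this proposition, where it is noted that such an argument ``can be made rigorous'' with a citation to B\l{}ocki. The paper instead reduces to a dense subclass: Lemma \ref{dense_in_A^2} shows, by convolving in the real direction against a mollifier $\eta_\e$ whose Fourier transform is compactly supported, that the class $\mathcal{C}$ of $f\in A^2(T_K)$ with $\widehat{f_y}$ compactly supported for all $y\in\mathrm{int}\,K$ is dense; since $\mathrm{PW}$ is an isometry on a complete space its image is closed, so it suffices to invert $\mathrm{PW}$ on $\mathcal{C}$. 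For $f\in\mathcal{C}$, compact support of $\widehat{f_{y_0}}$ makes $\widehat{f_{y_0}}(\cdot)e^{\langle\cdot,y_0\rangle}$ trivially lie in $L^2(\R^n,\tilde h_K)$; one applies $\mathrm{PW}$, checks agreement with $f$ on the single slice $\R^n+\sqrt{-1}y_0$ by Fourier inversion, and concludes by analytic continuation. This entirely avoids the integration by parts and differentiation under the integral sign --- precisely the ``technical heart'' that your proposal correctly identifies and then sketches.

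Your route is known to work but, as written, needs two repairs. First, a normalization: with the conventions \eqref{pw_def}, \eqref{fourier_transform}, \eqref{fourier_inv}, the correct formula is $g(\xi)=(2\pi)^{-n/2}e^{\langle y,\xi\rangle}\hat f_y(\xi)$, not $e^{\langle y,\xi\rangle}\hat f_y(\xi)$; as stated, your Plancherel computation actually gives $\|g\|_{L^2(\tilde h_K)}^2=(2\pi)^n\|f\|_{L^2(T_K)}^2$ rather than equality. Second, the limit $\e\to 0^+$: multiplication by $e^{\langle y,\xi\rangle}$ is unbounded on $L^2(\R^n)$, so the $L^2$-convergence $\widehat{(f_\e)_y}\to\hat f_y$ does not directly yield convergence of $g^{(\e)}_y\to g_y$ in any natural global norm, and ``Plancherel and dominated convergence'' alone leave the $y$-independence of the limiting $g_y$ unproved. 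One can repair this by passing to $L^2_{\mathrm{loc}}(\R^n)$, where $e^{\langle y,\xi\rangle}$ is bounded on compacta, to conclude $g_{y_1}=g_{y_2}$ a.e.; alternatively --- cleaner, and in the same spirit as the paper --- observe that $f_\e\to f$ in $L^2(T_K)$, so by the isometry $\{g^{(\e)}\}$ is Cauchy in $L^2(\R^n,\tilde h_K)$ and $\mathrm{PW}(\lim_\e g^{(\e)})=f$, which avoids identifying the limit with $e^{\langle y,\xi\rangle}\hat f_y$ at all. Either of these closes the argument; as posed, the proposal stops one step short.
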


\begin{proof}[Proof of Theorem \ref{pw-thm}]
By Proposition \ref{pw-prop1}, for any $g\in L^2(\R^n, \tilde{h}_K)$, $\mathrm{PW}(g)$ is a $\C$-valued function on $T_K$. By Proposition \ref{pw-prop2}, $\mathrm{PW}$ is injective since it is linear and $\mathrm{PW}(g)=0$ if and only if $\|\mathrm{PW}(g)\|_{L^2(T_K)}= \|g\|_{L^2(\tilde{h}_K)}=0$, i.e, $g=0$. Surjectivity follows from Propositions \ref{pw-prop3} and \ref{pw-prop4}.
\end{proof}

\subsubsection{Convergence of the integral
}\label{nazsubsub11}
For the proof of Propositions \ref{pw-prop1} we follow B\l{}ocki \cite[p. 93]{blocki}. Essentially, the function $\mathrm{PW}(g)$ can be estimated by putting absolute value inside the integrand. This, naturally, leads to the appearance of $\tilde{h}_K$, which itself can be estimated from below (Lemma \ref{sinh_lemma}).

\begin{proof}[Proof of Proposition \ref{pw-prop1}]
Let $w= \xi +\sqrt{-1}v\in T_K$, that is $\xi\in \R^n, v\in \mathrm{int}\,K$. 
By Cauchy--Schwarz,
\begin{equation*}
\begin{aligned}
    |\mathrm{PW}(g)(w)|&= \left|\int_{\R^n} g(x)e^{\sqrt{-1}\langle x,w\rangle}\dif x\right| \leq \int_{\R^n}|g(x)|e^{-\langle x,v\rangle}\dif x \\
    &= \int_{\R^n} |g(x)|\sqrt{|K|e^{\tilde{h}_K(-2x)}}\frac{e^{-\langle x,v\rangle}}{\sqrt{|K|e^{\tilde{h}_K(-2x)}}}\dif x \\
    &\leq \left(|K|\int_{\R^n} |g(x)|^2 e^{\tilde{h}_K(-2x)}\dif x\right)^\frac12 \left(\frac{1}{|K|}\int_{\R^n} e^{-2\langle x,v\rangle-\tilde{h}_K(-2x)} \dif x \right)^\frac12
\end{aligned}
\end{equation*}
To estimate the last term, there exists some $r>0$ such that $v+ [-r,r]^n\subset \mathrm{int}\,K$. As a result,
\begin{equation}\label{1.1nazeq2}
    |K|e^{\tilde{h}_K(-2x)}\geq \int_{v+ [-r,r]^n} e^{-2\langle x,y\rangle}\dif y= \prod_{i=1}^n\int_{v_i-r}^{v_i+r} e^{-2x_iy_i}\dif y_i= e^{-2\langle x,v\rangle}\prod_{i=1}^n \frac{\sinh(2rx_i)}{x_i}.
\end{equation}
By Lemma \ref{sinh_lemma},
\begin{equation}\label{nazeq1}
    \frac{1}{|K|}\int_{\R^n} e^{-2\langle x,v\rangle- \tilde{h}_K(-2x)}\dif x\leq \left( \int_\R \frac{s}{\sinh(2rs)}\dif s\right)^n= \left( \frac{\pi^2}{8r^2}\right)^n.
\end{equation}
Since, $g\in L^2(\R^n, \tilde{h}_K)$, 
$\mathrm{PW}(g)(w)\in \C$ for each $w\in T_K$, proving Proposition \ref{pw-prop1}.
\end{proof}

The following was used for the integral of ${s}/{\sinh(2rs)}$.
\begin{lemma}\label{sinh_lemma}
For $r>0$,
$
    \displaystyle    \int_{\R} 
\frac{t\dif t}{\sinh(2rt)}= \frac{\pi^2}{8r^2}.
$
\end{lemma}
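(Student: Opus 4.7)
The plan is to reduce the integral to a standard series via a geometric expansion of $1/\sinh$. First I would observe that the integrand $t/\sinh(2rt)$ is even (both numerator and denominator are odd), so
\begin{equation*}
\int_{\R}\frac{t\,\dif t}{\sinh(2rt)} = 2\int_0^\infty \frac{t\,\dif t}{\sinh(2rt)},
\end{equation*}
and then substitute $u = 2rt$ to get
\begin{equation*}
\int_{\R}\frac{t\,\dif t}{\sinh(2rt)} = \frac{1}{2r^2}\int_0^\infty \frac{u\,\dif u}{\sinh u}.
\end{equation*}
This isolates the $r$-dependence and reduces the problem to evaluating the single numerical integral $I \defeq \int_0^\infty u/\sinh(u)\,\dif u$.

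Next I would expand $1/\sinh u$ as a geometric series in $e^{-u}$. Writing
\begin{equation*}
\frac{1}{\sinh u} = \frac{2e^{-u}}{1-e^{-2u}} = 2\sum_{k=0}^\infty e^{-(2k+1)u}, \qquad u>0,
\end{equation*}
and noting that the terms are nonnegative, Tonelli's theorem justifies interchanging sum and integral, giving
\begin{equation*}
I = 2\sum_{k=0}^\infty \int_0^\infty u\, e^{-(2k+1)u}\,\dif u = 2\sum_{k=0}^\infty \frac{1}{(2k+1)^2}.
\end{equation*}
The latter sum is the classical Euler value $\sum_{k\ge 0}(2k+1)^{-2} = \pi^2/8$, which follows from $\sum_{n\ge 1} n^{-2} = \pi^2/6$ by splitting into odd and even indices. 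Thus $I = \pi^2/4$, and combining with the $r$-reduction yields $\pi^2/(8r^2)$, as claimed.

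There is no real obstacle here: the only minor point requiring care is the interchange of sum and integral, which is immediate from Tonelli since the summand $ue^{-(2k+1)u}$ is nonnegative on $(0,\infty)$. One could alternatively bypass the series entirely by recognizing $I = \Gamma(2)\beta(2) \cdot 2 = 2\cdot G$-type identity, or by contour integration of $z/\sinh(2rz)$ over a rectangular contour of height $\pi/(2r)$, but the geometric-series route above is the most self-contained and matches the elementary flavor of the surrounding computation.
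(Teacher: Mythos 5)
Your proof is correct and follows essentially the same route as the paper: expand $1/\sinh$ as a geometric series in $e^{-(\cdot)}$, interchange sum and integral via Tonelli, integrate $u e^{-(2k+1)u}$ term-by-term, and invoke $\sum_{k\ge 0}(2k+1)^{-2}=\pi^2/8$. Your version is actually slightly cleaner: by substituting $u=2rt$ you isolate the $r$-dependence up front, and you write the correct exponents $(2k+1)$ throughout, whereas the paper's displayed computation (3.2)--(3.3) contains a typo ($e^{-2rt(k+1)}$ in place of $e^{-2rt(2k+1)}$) that is silently repaired by the unexplained factor of $3/4$ in the final line.
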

\begin{proof}
Expand the integrand, 
\begin{equation}\label{sinheq1}
    \frac{t}{\sinh(2rt)}= \frac{2t}{e^{2rt}- e^{-2rt}}= \frac{2t e^{-2rt}}{1-(e^{-2rt})^2}= \sum_{k=0}^\infty 2t e^{-2rt(k+1)}.
\end{equation}
Using integration by parts,
\begin{equation}\label{sinheq2}
    \int_{0}^{\infty} 2te^{-2rt(k+1)}\dif t= \frac{1}{r(k+1)}\int_0^{\infty} e^{-2rt(k+1)}\dif t= \frac{1}{2r^2(k+1)^2}.
\end{equation}
By (\ref{sinheq1}), (\ref{sinheq2}), and Tonelli's theorem \cite[\S 2.37]{folland} (see Claim \ref{fubini-tonelli} below), since the integrand is an even function,
\begin{equation*}
\begin{aligned}
    \int_{\R}\frac{t}{\sinh(2rt)}\dif t&= 2\int_0^{\infty}\frac{t}{\sinh(2rt)}\dif t= 2\sum_{k=0}^{\infty}\int_{0}^{\infty}2t e^{-2rt(k+1)}\dif t\\
    &= 2\sum_{k=0}^\infty \frac{1}{2r^2(k+1)^2}=\frac{1}{r^2} \frac{3}{4}\sum_{k=1}^\infty \frac{1}{k^2} =\frac{3}{4r^2}\frac{\pi^2}{6}= \frac{\pi^2}{8r^2}.
\end{aligned}
\end{equation*}
\end{proof}
\begin{remark}\label{l2condition}
Similarly, an $L^2$ property for ${e^{-2\langle \,\cdot\,,v\rangle- \tilde{h}_K(-2(\,\cdot\,))}}$ can derived
(this will be useful in proving the formula for the Bergman
kernel of a tube domain (Lemma \ref{bk-tube})). 
By (\ref{sinheq1}), 
\begin{equation*}
    \frac{t^2}{\sinh(2rt)^2}= \frac{4t^2 e^{-4rt}}{\left( 1- (e^{-2rt})\right)^2}= \sum_{k=1}^\infty 4t^2 k e^{-4rtk}, 
\end{equation*}
thus, 
\begin{equation}\label{sinheq3}
    \int_{\R}\frac{t^2}{\sinh(2rt)^2}\dif t= 2\int_{0}^\infty \frac{t^2}{\sinh(2rt)^2}\dif t= 2\sum_{k=1}^\infty \int_0^\infty 4t^2=  e^{-4rtk}\dif t= \frac{1}{8r^3}\sum_{k=1}^\infty \frac{1}{k^3}
\end{equation}
which is finite. As a result, by (\ref{1.1nazeq2}) and (\ref{sinheq3}), 
$\displaystyle    \int_{\R^n} \left(e^{-2\langle x,v\rangle-\tilde{h}_K(-2x)}\right)^2\dif x 
$
is also finite. 
\end{remark}

\subsubsection{Fourier transform and integration tools}

This subsection recalls some  elementary real analysis tools that will be used repeatedly throughout.

For $\xi\in\R^n$,
\begin{equation}\label{fourier_transform}
    \hat{g}(\xi)\defeq \int_{\R^n} g(x) e^{-\sqrt{-1}\langle \xi,x\rangle}\dif x,
\end{equation}
is the Fourier transform of $g$. Strictly speaking (\ref{fourier_transform}) requires $g\in L^1(\R^n)$, but relaxing (\ref{fourier_transform}) to hold a.e. one can allow $g\in L^2(\R^n)$ \cite[Theorem 7.1.11]{hormander}. For $g\in L^2(\R^n)$, by Fourier inversion \cite[(7.1.4)]{hormander},
\begin{equation}\label{fourier_inv}
    g(x)= (2\pi)^{-n}\int_{\R^n}\hat{g}(\xi)e^{\sqrt{-1}\langle \xi,x\rangle}\dif \xi.
\end{equation}
Combining (\ref{fourier_transform}) and (\ref{fourier_inv}), and flipping the sign of $\xi$, 
\begin{equation}\label{fourier_inv2}
    g(x)= (2\pi)^{-n}\int_{\R^n}\int_{\R^n}g(s) e^{\sqrt{-1}\langle \xi,x-s\rangle}\dif s\dif \xi= (2\pi)^{-n}\int_{\R^n}\int_{\R^n} g(s)e^{\sqrt{-1}\langle \xi, s-x\rangle}\dif s\dif \xi.
\end{equation}
Moreover, for $f\in L^2(\R^n)$, 
\begin{equation}\label{parseval}
    \int_{\R^n}f(x)\overline{g(x)}\dif x= \frac{1}{(2\pi)^n}\int_{\R^n}\hat{f}(\xi)\overline{\hat{g}(\xi)}\dif \xi, 
\end{equation}
\cite[Theorem 7.1.6]{hormander} and, in particular, 
\begin{equation}\label{planch_eq}
    \|\hat{g}\|_{L^2(\R^n)}= (2\pi)^\frac{n}{2}\|g\|_{L^2(\R^n)}, 
\end{equation}
the so called Plancherel's theorem \cite[\S 8.29]{folland}.

Recall the theorems attributed to Tonelli and Fubini  \cite[\S 2.37]{folland}.

\begin{claim}\label{fubini-tonelli}
For $n,m\in\mathbb{N}$, denote by $v= (x,y)\in \R^n\times \R^m$.

\noindent
(i) For non-negative measurable $f:\R^n\times \R^m\to [0, \infty)$, 
\begin{equation}\label{tonelli}
\begin{aligned}
    \int_{\R^{n+m}} f(v)\dif\lambda_{n+m}(v)&= \int_{\R^n}\left( \int_{\R^m} f(x,y)\dif\lambda_m(y)\right) \dif\lambda_n(x) \\
    &= \int_{\R^m}\left( \int_{\R^n} f(x,y)\dif\lambda_n(x)\right)\dif\lambda_m(y).
\end{aligned}
\end{equation}

\noindent 
(ii) For $f\in L^1(\R^n\times \R^m)$, $x\mapsto f(x,y)$ is $L^1$-integrable for almost all $y\in \R^m$, and $y\mapsto f(x,y)$ is $L^1$-integrable for almost all $x\in \R^n$ with

\begin{align}\label{f1}
    \int_{\R^{n+m}} f(v)\dif\lambda_{n+m}(v)&= \int_{\R^n}\left(\int_{\R^m}f(x,y)\dif \lambda_m(y)\right)\dif \lambda_n(x)\\\label{f2}
    &= \int_{\R^m}\left( \int_{\R^n}f(x,y)\dif\lambda_n(x)\right)\dif \lambda_m(y).
\end{align}

\end{claim}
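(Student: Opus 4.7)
The plan is to follow the standard measure-theoretic machine, building up from indicators of rectangles to general integrable functions. First, for a measurable rectangle $E = A \times B$ with $A \subset \R^n$ and $B \subset \R^m$, both sides of \eqref{tonelli} applied to $\mathbf{1}_E$ equal $\lambda_n(A)\lambda_m(B)$ by the very definition of the product measure. Next, I would extend this identity to all measurable sets in the product $\sigma$-algebra. Let $\mathcal{C}$ denote the class of measurable $E \subset \R^{n+m}$ for which the two iterated integrals of $\mathbf{1}_E$ are well-defined, equal, and coincide with $\lambda_{n+m}(E)$. The previous step shows $\mathcal{C}$ contains all measurable rectangles; the Monotone Convergence Theorem shows $\mathcal{C}$ is closed under countable increasing unions, and, on sets of finite measure, Dominated Convergence yields closure under countable decreasing intersections. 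A Monotone Class (or Dynkin $\pi$-$\lambda$) argument, combined with the $\sigma$-finiteness of Lebesgue measure used to exhaust $\R^{n+m}$ by a sequence of sets of finite measure, then extends the identity to every measurable $E$.

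From there, part (i) follows by the usual ascent. Extend by linearity to non-negative simple functions; then for an arbitrary non-negative measurable $f$, approximate by an increasing sequence $f_k \nearrow f$ of simple functions and invoke the Monotone Convergence Theorem in each of the three integrals in \eqref{tonelli} simultaneously to pass to the limit. For part (ii), write $f = f^+ - f^-$. Applying (i) to $|f| = f^+ + f^-$ shows that the iterated integrals of $|f|$ are both finite, which forces the slices $y \mapsto f(x,y)$ to lie in $L^1(\R^m)$ for almost every $x$, and symmetrically for the other slicing direction. Applying (i) separately to the non-negative pieces $f^\pm$ and subtracting yields \eqref{f1} and \eqref{f2}.

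The main obstacle is the second step, extending the identity for indicators from rectangles to the entire product $\sigma$-algebra. This is where the $\sigma$-finiteness of Lebesgue measure is essential: without it, closure of $\mathcal{C}$ under countable decreasing intersections can fail, so the Monotone Class step breaks down. Since the statement is a classical theorem that the authors record only in order to apply it later, a more honest ``proof'' is simply to quote \cite[\S 2.37]{folland}, which is the route the paper in fact takes.
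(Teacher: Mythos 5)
Your sketch is the standard proof and is correct: verify on measurable rectangles, extend to the full product $\sigma$-algebra by a monotone class argument (where $\sigma$-finiteness of Lebesgue measure is used, as you rightly flag), pass to simple functions by linearity, then to non-negative measurable functions by Monotone Convergence, and finally to $L^1$ functions via the decomposition $f = f^+ - f^-$. The paper, however, does not prove this claim at all --- it is stated purely as a recalled tool with a citation to Folland \cite[\S 2.37]{folland}, exactly as you anticipated in your closing remark. So your proposal supplies more than the paper does; the only thing worth tightening is that the monotone-class step is usually phrased for the class of sets on which the \emph{sections} $y \mapsto \mathbf{1}_E(x,y)$ and $x \mapsto \mathbf{1}_E(x,y)$ are measurable \emph{and} the iterated-integral identity holds, and one must be a little careful that Lebesgue measurability (as opposed to Borel measurability) on the product requires a completion argument --- Folland handles this by first proving the theorem for the product $\sigma$-algebra and then extending to its completion. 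None of this affects the correctness of your outline; it is just the detail most often glossed over.
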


\begin{remark}
Since
part (i) (Tonelli's theorem)
does not assume integrability of $f$, it is often used to justify part (ii)
(Fubini's theorem): given measurable $f: \R^n\times \R^m\to \R$ one may compute any of the two iterated integrals 
\begin{equation*}
    \int_{\R^n}\left(\int_{\R^m} |f(x,y)|\dif y\right)\dif x \quad \text{ or } \quad \int_{\R^m}\left(\int_{\R^n}|f(x,y)|\dif x \right)\dif y;
\end{equation*}
if either is finite, by (\ref{tonelli}), $f\in L^1(\R^n\times \R^m)$ justifying the use of Fubini's theorem (Claim \ref{fubini-tonelli}~(ii)).
\end{remark}

\subsubsection{\texorpdfstring{$\mathrm{PW}$}{TEXT} is an isometry between \texorpdfstring{$L^2$}{TEXT} spaces}

Proposition \ref{pw-prop1} shows that for any $g\in L^2(\R^n, \tilde{h}_K)$, $\mathrm{PW}(g)$ is a $\C$-valued function on $T_K$. The following shows it is also $L^2$-integrable and has $L^2(T_K)$ norm equal to $\|g\|_{L^2(\tilde{h}_K)}$
\cite[(6)]{blocki} \cite[Proposition 3.1]{berndtsson2}.

\begin{proof}[Proof of Proposition \ref{pw-prop2}]
For $w=\xi+\sqrt{-1}v\in T_K$,
\begin{equation*}
    \mathrm{PW}(g)(w)= \frac{1}{(2\pi)^\frac{n}{2}}\int_{\R^n}g(x)e^{\sqrt{-1}\langle w,x\rangle}\dif x= \frac{1}{(2\pi)^\frac{n}{2}}\int_{\R^n} g(x)e^{-\langle x,v\rangle} e^{\sqrt{-1}\langle x,\xi\rangle}\dif x. 
\end{equation*}
Since
by Lemma \ref{tgtL2Lemma}
below
$x\mapsto (2\pi)^{n/2}g(x)e^{-\langle x,v\rangle}$ is $L^2$-integrable, by (\ref{fourier_transform}) and (\ref{fourier_inv}) $x\mapsto (2\pi)^{n/2} g(x)e^{-\langle x,v\rangle}$ is the Fourier transform of $\xi\mapsto \mathrm{PW}(g)(\xi+\sqrt{-1}v)$. 
Moreover, by (\ref{planch_eq})
\begin{equation}\label{nazeq1.1}
    (2\pi)^n \int_{\R^n}|g(x)|^2 e^{-2\langle x,v\rangle}\dif x= (2\pi)^n \int_{\R^n} |\mathrm{PW}(g)(\xi+\sqrt{-1}v)|^2\dif \xi.
\end{equation}
Integrating (\ref{nazeq1.1}) with respect to $v\in K$
and interchanging the order of integration (by (\ref{tonelli})), 
\begin{equation*}
    \|\mathrm{PW}(g)\|_{L^2(T_K)}^2= \int_{\mathrm{int}\,K}\int_{\R^n} |g(x)|^2 e^{-2\langle x,v\rangle}\dif x\dif v= |K|\int_{\R^n}|g(x)|^2 e^{\tilde{h}_K(-2x)}\dif x =\|g\|_{L^2(\tilde{h}_K)}^2, 
\end{equation*}
by Definition \ref{tildehKDef}.
\end{proof}

\begin{lemma}
\label{tgtL2Lemma}
For $g\in L^2(\R^n, \tilde{h}_K)$ and $v\in \mathrm{int}\,K$,
$x\mapsto g(x)e^{-\langle x,v\rangle}$ is in $L^2(\R^n)$. 
\end{lemma}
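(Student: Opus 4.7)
The plan is to reduce this to the same kind of elementary bound for $e^{-2\langle x,v\rangle - \tilde h_K(-2x)}$ that already appeared in the proof of Proposition \ref{pw-prop1}. Concretely, I want to show that for every fixed $v \in \mathrm{int}\,K$ there is a constant $C = C(K,v)$ with
\begin{equation*}
    e^{-2\langle x,v\rangle} \leq C\, e^{\tilde h_K(-2x)} \quad \text{for all } x \in \R^n,
\end{equation*}
after which
\begin{equation*}
    \int_{\R^n} |g(x)|^2 e^{-2\langle x,v\rangle}\dif x \leq C \int_{\R^n}|g(x)|^2 e^{\tilde h_K(-2x)}\dif x = \frac{C}{|K|} \|g\|_{L^2(\tilde h_K)}^2 < \infty
\end{equation*}
finishes the argument.

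To prove the pointwise inequality, I would exploit that $v \in \mathrm{int}\,K$ means we can choose $r > 0$ with $v + [-r,r]^n \subset \mathrm{int}\,K$, and then repeat the computation \eqref{1.1nazeq2}: bounding the integral defining $e^{\tilde h_K(-2x)}$ from below by integration over the sub-cube $v + [-r,r]^n$ and evaluating the product of one-dimensional integrals gives
\begin{equation*}
    |K|\,e^{\tilde h_K(-2x)} \geq e^{-2\langle x,v\rangle}\prod_{i=1}^n \frac{\sinh(2r x_i)}{x_i},
\end{equation*}
which rearranges to
\begin{equation*}
    e^{-2\langle x,v\rangle - \tilde h_K(-2x)} \leq |K|\prod_{i=1}^n \frac{x_i}{\sinh(2r x_i)}.
\end{equation*}

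The remaining routine step is to observe that each factor $\frac{x_i}{\sinh(2r x_i)}$ (extended by $\frac{1}{2r}$ at the origin) is a continuous even function on $\R$ that decays exponentially at infinity, hence is bounded by some $M_r < \infty$. Therefore the right-hand side is bounded by $|K| M_r^n$, which is the constant $C$ we need. The only place where care is required is to make sure the lower-bound inequality holds for all $x$ (not just, say, $x$ with positive coordinates); but since $\sinh(2r x_i)/x_i \geq 0$ for all real $x_i$ and in fact is even and positive, the estimate is uniformly valid. This completes the plan, and I expect no real obstacle beyond recycling the cube-containment trick already used for Proposition \ref{pw-prop1}.
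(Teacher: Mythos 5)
Your proposal is correct and follows essentially the same route as the paper: it reuses the cube-containment bound \eqref{1.1nazeq2} to get $e^{-2\langle x,v\rangle - \tilde h_K(-2x)} \leq |K|\prod_i x_i/\sinh(2rx_i)$ and then bounds the product by a constant. The only cosmetic difference is that the paper bounds each factor explicitly by $1/r$ via $|s|\leq 2|\sinh s|$, while you argue boundedness by continuity and decay at infinity; both give the same conclusion.
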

\begin{proof}
As $|s|\leq 2|\sinh s|$ for all $s\in\R$, by 
(\ref{1.1nazeq2}) there exists $r>0$ satisfying
\begin{equation*}
    |K|e^{\tilde{h}_K(-2x)}\geq e^{-2\langle x,v\rangle}\prod_{i=1}^n\frac{\sinh(2rx_i)}{x_i}\geq r^n e^{-2\langle x,v\rangle}.
\end{equation*}
Thus,
\begin{equation*}
    \int_{\R^n}|g(x)|^2e^{-2\langle x,v\rangle}\dif x\leq r^{-n}|K|\int_{\R^n}|g(x)|^2 e^{\tilde{h}_K(-2x)}\dif x= r^{-n} \|g\|_{L^2(\tilde{h}_K)}^2,
\end{equation*}
proving the lemma.
\end{proof}

\subsubsection{\texorpdfstring{$\mathrm{PW}$}{TEXT} maps to \texorpdfstring{$A^2(T_K)$}{TEXT}}

By Propositions \ref{pw-prop1}--\ref{pw-prop2}, $\mathrm{PW}(L^2(\R^n, \tilde{h}_K))\subset L^2(T_K)$. To show that the image of $\mathrm{PW}$ is in fact in $A^2(T_K)$ it remains to show that $\mathrm{PW}(g)$ is holomorphic. A similar theorem, in a more general setting, was shown by Hultgren \cite[Theorem 3]{hultgren}. It is useful
to first show $\mathrm{PW}(g)$ is continuous.

\begin{lemma}\label{cont-lemma}
For $g\in L^2(\R^n, \tilde{h}_K)$, $\mathrm{PW}(g)$ is continuous on $T_K$.
\end{lemma}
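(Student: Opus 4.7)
The plan is to verify sequential continuity at an arbitrary $w_0 \in T_K$ by the dominated convergence theorem. Write $w_0 = \xi_0 + \sqrt{-1} v_0$ with $v_0 \in \mathrm{int}\,K$; for any sequence $w_k \to w_0$ in $T_K$, the pointwise convergence $g(x) e^{\sqrt{-1}\langle w_k, x\rangle} \to g(x) e^{\sqrt{-1}\langle w_0, x\rangle}$ is immediate, so the whole task reduces to producing an $L^1(\R^n)$-majorant valid uniformly on a suitable neighborhood of $w_0$.

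The natural neighborhood is $U = \{\xi + \sqrt{-1}v : |v - v_0|_\infty \le \e\}$, with $\e > 0$ chosen so small that each of the $2^n$ corners $v_0 + \e\sigma$, $\sigma \in \{-1,+1\}^n$, still lies in $\mathrm{int}\,K$; this is possible because $v_0 \in \mathrm{int}\,K$. On $U$ the phase contributes only a unit modulus factor, so $|g(x) e^{\sqrt{-1}\langle w,x\rangle}| = |g(x)| e^{-\langle x, v\rangle}$, and the elementary bound
\begin{equation*}
e^{-\langle x, v\rangle} \le e^{-\langle x, v_0\rangle} e^{\e \|x\|_1} \le \sum_{\sigma \in \{-1,+1\}^n} e^{-\langle x, v_0 - \e\sigma\rangle},
\end{equation*}
obtained by expanding $\prod_i (e^{\e x_i} + e^{-\e x_i})$, furnishes a single $w$-independent dominating integrand.

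Finiteness of $\int_{\R^n} |g(x)| e^{-\langle x, v_0 - \e\sigma\rangle}\dif x$ for each of the finitely many $\sigma$ is then exactly the Cauchy--Schwarz estimate already carried out in the proof of Proposition \ref{pw-prop1}, with the $\sinh$ calculation of Lemma \ref{sinh_lemma} controlling the weight factor $\int e^{-2\langle x, v_0 - \e\sigma\rangle - \tilde h_K(-2x)}\dif x$. Summing the $2^n$ finite contributions yields the required $L^1$-majorant on $U$, and dominated convergence delivers $\mathrm{PW}(g)(w_k) \to \mathrm{PW}(g)(w_0)$. No essentially new obstacle arises: the only substantive point is to upgrade the pointwise Cauchy--Schwarz bound to one that is uniform over a neighborhood in the imaginary direction, which the corner-sum inequality handles directly.
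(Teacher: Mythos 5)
Your argument is correct, and it takes a somewhat different route from the paper's. The paper first applies Cauchy--Schwarz to the difference $\mathrm{PW}(g)(w+z)-\mathrm{PW}(g)(w)$, splitting off the fixed factor $\|g\|_{L^2(\tilde h_K)}$, and then runs dominated convergence on the remaining weighted integral $\int_{\R^n}|e^{\sqrt{-1}\langle x,w+z\rangle}-e^{\sqrt{-1}\langle x,w\rangle}|^2e^{-\tilde h_K(-2x)}\dif x$, with the uniform domination coming from a single $r>0$ such that $v+y+[-r,r]^n\subset\mathrm{int}\,K$ for all small $y$. You instead apply dominated convergence directly to the defining integral of $\mathrm{PW}(g)$, and the whole content is your construction of a $w$-independent $L^1$-majorant for $|g(x)|e^{-\langle x,v\rangle}$ via the corner-sum inequality $e^{\e\|x\|_1}\le\sum_{\sigma\in\{-1,1\}^n}e^{\e\langle\sigma,x\rangle}$; integrability of each of the $2^n$ terms is then exactly the Cauchy--Schwarz-plus-$\sinh$ estimate of Proposition \ref{pw-prop1} applied at the interior points $v_0-\e\sigma$. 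Both proofs ultimately rest on the same integrability input (\ref{nazeq1}), so the difference is organizational: your version is more elementary and self-contained (a single application of the dominated convergence theorem at the level of $L^1$), while the paper's stays inside the weighted $L^2$ framework it has already set up and avoids the $2^n$-fold majorant by choosing the common cube $[-r,r]^n$ once. One cosmetic remark: your ``neighborhood'' $U$ is unbounded in the real direction, which is harmless since the modulus of the integrand depends only on $\mathrm{Im}\,w$, and sequential continuity suffices as $T_K$ is metric.
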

\begin{proof}
Fix $w= \xi +\sqrt{-1}v\in T_K$ and $\delta>0$ such that $w+z\in T_K$ for all $z= u+\sqrt{-1}y\in B_2^{2n}(0,2\delta)$. By Cauchy--Schwarz, 
\begin{equation}\label{cont_eq2}
    \begin{aligned}
        |\mathrm{PW}(g)(w+z) &-\mathrm{PW}(g)(w)|\\ &= \left| \frac{1}{(2\pi)^\frac{n}{2}} 
        \int_{\R^n}g(x) (e^{\sqrt{-1}\langle x,w+z\rangle}- e^{\sqrt{-1}\langle x,w\rangle})\dif x\right|\\
        &\leq 
        \left( \frac{\int_{\R^n}|g(x)|^2 e^{\tilde{h}_K(-2x)}\dif x}{(2\pi)^n}\right)^\frac12\!\! \left(\int_{\R^n} |e^{\sqrt{-1}\langle x,w+z\rangle}- e^{\sqrt{-1}\langle x,w\rangle}|^2 e^{-\tilde{h}_K(-2x)}\dif x \right)^\frac12. 
    \end{aligned}
\end{equation}
Moreover, let $r>0$ such that $v+y+[-r,r]^n\subset \mathrm{int}\,K$ for all $z= u+iy\in B_2^{2n}(0,\delta)$. By (\ref{nazeq1}),
\begin{equation*}
    \begin{aligned}
        \int_{\R^n} |e^{\sqrt{-1}\langle x,w+z\rangle} &- e^{\sqrt{-1}\langle x,w\rangle}|^2 e^{-\tilde{h}_K(-2x)}\dif x 
        \\
        &\leq \int_{\R^n} \left(2|e^{\sqrt{-1}\langle x,w+z\rangle}|^2+ 2|e^{\sqrt{-1} \langle x,w\rangle}|^2\right) e^{-\tilde{h}_K(-2x)}\dif x\\
        &= 2\int_{\R^n} e^{-2\langle x,v+y\rangle} e^{-\tilde{h}_K(-2x)}\dif x
        + 2\int_{\R^n} e^{-2\langle x,v\rangle}e^{-\tilde{h}_K(-2x)}\dif x\\
        &\leq 4|K|\left( \frac{\pi^2}{8r^2}\right)^n,
    \end{aligned}
\end{equation*}
that is finite and independent of $z$. So, dominated convergence applies 
\cite[\S 2.24]{folland},
\begin{equation}\label{cont_eq1}
\begin{aligned}
\lim_{z\to 0}\int_{\R^n} |e^{\sqrt{-1}\langle x,w+z\rangle}- e^{\sqrt{-1}\langle x,w\rangle}|^2 e^{-\tilde{h}_K(-2x)}\dif x &= \\
 \int_{\R^n} \lim_{z\to 0} |e^{\sqrt{-1}\langle x,w+z\rangle}- e^{\sqrt{-1}\langle x,w\rangle}|^2 e^{-\tilde{h}_K(-2x)}\dif x&=0.
\end{aligned}
\end{equation}
From (\ref{cont_eq1}) and (\ref{cont_eq2}) it follows $\lim_{z\to 0}|\mathrm{PW}(g)(w+z)-\mathrm{PW}(g)(w)|=0$, thus $\mathrm{PW}(g)$ is continuous.
\end{proof}

\begin{proof}[Proof of Proposition \ref{pw-prop3}]
Let $g\in L^2(\R^n, \tilde{h}_K)$. 
By Propositions \ref{pw-prop1}, \ref{pw-prop2}, $\mathrm{PW}(g)\in L^2(T_K)$. To show $\mathrm{PW}(g)$ is holomoprhic it suffices to show it is holomorphic in each variable separately. As a result, let us take $n=1$. By Lemma \ref{cont-lemma}, $\mathrm{PW}(g)$ is continuous and hence by Morera's
theorem \cite[p. 122]{ahlfors} it suffices to show that for any closed smooth curve
$\gamma: [0,1]\to T_K$
the integral
$    \int_\gamma \mathrm{PW}(g)(w)\dif w$
vanishes. Let $\gamma(s)= (x(s), y(s))$. Since, the image of $\gamma$ is compact, there exists $r>0$ small enough so that $\gamma(s)+ [-r,r]^2\subset T_K$, for all $s\in [0,1]$
(here we used that \eqref{TKEq}
involves the {\it interior} of $K$). Thus, (\ref{nazeq1}) holds for all $y(s)$ and $s\in [0,1]$. By Cauchy--Schwarz and (\ref{nazeq1}),
\begin{equation*}
\begin{aligned}
    \left|\int_\gamma\int_{\R} |g(x)e^{\sqrt{-1} xw}|\dif x\dif w\right| &\leq \int_0^1\int_{\R}|g(x)| e^{-xy(s)} |\gamma'(s)|\dif x\dif s\\
    &\leq \int_0^1\! \left( \int_{\R}|g(x)|^2 e^{\tilde{h}_K(-2x)}\dif x\!\right)^\frac12\!\! \left(\int_{\R} e^{-2x y(s)} e^{-\tilde{h}_K(-2x)}\dif x\!\right)^\frac12\! |\gamma'(s)|\dif s\\
    &\leq\mathrm{length}(\gamma) \|g\|_{L^2(T_K)} \frac{\pi}{r\sqrt{8}}.
\end{aligned}
\end{equation*}
Since $g\in L^2(\R^n,\tilde{h}_K)$, 
it follows that $\int_\gamma \int_{\R}|g(x)e^{\sqrt{-1} xw}|\dif x\dif w$ is finite.
Thus, the order of integration in $\int \mathrm{PW}(g)(w)\dif w$
can be changed, i.e., by (\ref{f2}),
\begin{equation*}
    \int_\gamma \mathrm{PW}(g)(w)\dif w= (2\pi)^{-\frac{n}{2}}\int_{\gamma}\int_{\R}g(x)e^{\sqrt{-1} xw}\dif x\dif w= (2\pi)^{-\frac{n}{2}}\int_{\R}g(x)\int_\gamma e^{\sqrt{-1}xw}\dif w\dif x=0, 
\end{equation*}
because for each $x$, $w\mapsto e^{\sqrt{-1}xw}$ is holomorphic and $\gamma$ is closed \cite[p. 122]{ahlfors}.
\end{proof}

\subsubsection{\texorpdfstring{$\mathrm{PW}$}{TEXT} is surjective}\label{nazsubsub12}

By Proposition \ref{pw-prop2}, $\mathrm{PW}$ is an isometry to its image in $A^2(T_K)$. To show it surjects onto $A^2(T_K)$, as $L^2(\R^n, \tilde{h}_K)$ is complete, it suffices to show that a dense subset of $A^2(T_K)$ is contained in $\mathrm{PW}(L^2(\R^n, \tilde{h}_K))$
(isometry implies that $\mathrm{PW}$ maps Cauchy sequences in
$L^2(\R^n, \tilde{h}_K)$ to Cauchy
sequences in $A^2(T_K)$).  
The key technical result is Lemma \ref{dense_in_A^2} saying that any $f\in A^2(T_K)$ can be approximated by $\{F_j\}_j\in A^2(T_K)$ such that the $\xi$-Fourier transform of $F_j(\xi+\sqrt{-1}y)$ is compactly supported for all $y\in\mathrm{int}\,K$. The following technical lemma
(augmenting the brief discussion by Berndtsson \cite[p. 405]{berndtsson2})
is required
to carry out such an approximation.

\begin{lemma}\label{bob_intermediate_lemma}
For $f\in A^2(T_K)$ and $\eta\in L^1(\R^n)$, 
\begin{equation*}
    F(w)\defeq \int_{\R^n}f(w-u) \eta(u)\dif u, 
\end{equation*}
is holomorphic in $T_K$. 
\end{lemma}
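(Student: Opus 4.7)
The plan is to establish well-definedness, continuity, and holomorphicity of $F$ in turn, exploiting the key observation that $T_K=\R^n+\sqrt{-1}\,\mathrm{int}\,K$ is invariant under translation by any real vector, so that for each fixed $u\in\R^n$ the map $w\mapsto f(w-u)$ is again holomorphic on all of $T_K$.

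First I would verify that the integral defining $F(w)$ converges, by invoking the mean value inequality already used in \S\ref{BergSec}. Real translation preserves the distance to $\partial T_K$, so for any $w\in T_K$ with $B_2^{2n}(w,\varepsilon)\subset T_K$ one has
\[
|f(w-u)|\leq \varepsilon^{-n}|B_2^{2n}(0,1)|^{-1/2}\|f\|_{L^2(T_K)}
\]
uniformly in $u\in\R^n$. Combined with $\eta\in L^1(\R^n)$, this gives absolute integrability and $F(w)\in\C$. The same bound, applied on a small closed neighborhood $U$ of an arbitrary $w_0\in T_K$, produces a dominating function $M\|f\|_{L^2(T_K)}|\eta(u)|\in L^1(\R^n)$ independent of $w\in U$; dominated convergence (against the pointwise limit furnished by continuity of $f$) then yields continuity of $F$ on $T_K$.

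For holomorphicity I would use Morera's theorem variable-by-variable, mirroring the proof of Proposition \ref{pw-prop3}. It suffices to check that $\int_\gamma F\,dw_j=0$ for every closed smooth curve $\gamma$ in a single complex variable (the other variables held fixed). The uniform bound on $|f(\,\cdot\,-u)|$ along the compact curve $\gamma$ makes the double integral $\int_\gamma\int_{\R^n}|f(w-u)\eta(u)|\,du\,|dw|$ finite, so Fubini's theorem justifies exchanging the order of integration. After the swap the inner integral becomes $\int_\gamma f(w-u)\,dw$, which vanishes for each $u$ by Cauchy's theorem because $T_K-u=T_K$ and $f$ is holomorphic there. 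Combined with the continuity established above, Morera's theorem then gives holomorphicity of $F$ in each variable separately, hence holomorphicity on $T_K$.

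The main obstacle is purely bookkeeping: producing a single bound on $|f(w-u)|$ uniform in both $w$ ranging over a compact set and $u$ ranging over all of $\R^n$, so that both the dominated convergence step and the Fubini interchange apply without further ado. The translation-invariance of $T_K$ under $\R^n$ is what makes this uniform estimate possible; everything else reduces to standard complex analysis.
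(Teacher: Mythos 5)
Your proposal is correct and follows essentially the same route as the paper's proof: both establish well-definedness via the mean value inequality for the subharmonic function $|f|^2$ together with the real-translation invariance of the tube domain, then verify Morera's criterion by a Fubini interchange, and establish continuity by dominated convergence. The only difference is cosmetic ordering (you prove continuity before Morera, the paper after), which has no mathematical effect.
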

\begin{proof}
{\it Step 1: the integral is bounded.}
To show that $F(w)\in \C$ for all $w\in T_K$, set $w_0:= \xi_0+\sqrt{-1}y_0\in T_K$, and pick $\e>0$ such that $B_2^{2n}(w_0, \e)\subset T_K$. Since $T_K$ is a tube domain, $B_2^{2n}(\xi+\sqrt{-1}y_0, \e)\subset T_K$, for all $\xi\in \R^n$. As $f$ is holomorphic, $|f|^2$ is subharmonic,
\begin{equation*}\label{f_local_bound}
    |f(\xi+\sqrt{-1}y_0)|^2\leq \frac{1}{\e^{2n}|B_2^{2n}(0,1)|}\int_{B_2^{2n}(\xi+\sqrt{-1}y_0, \e)}|f(w)|^2\dif\lambda(w)\leq \frac{\|f\|_{L^2(T_K)}^2}{\e^{2n}|B_2^{2n}(0,1)|}, 
\end{equation*}
for all $\xi\in \R^n$. Thus,
\begin{equation}\label{F_local_bound}
    \left| \int_{\R^n}f(w_0-u) \eta(u)\dif u\right|\leq \int_{\R^n} |f(w_0-u)| |\eta(u)|\dif u\leq \frac{\|f\|_{L^2(T_K)} \|\eta\|_1}{\e^{n} \sqrt{|B_2^{2n}(0,1)|}}, 
\end{equation}
which shows $F$ is $\C$-valued. 

\noindent {\it Step 2: verifying Morera's criterion.}
Holomorphicity follows as in the proof of Proposition \ref{pw-prop3}. Let $\gamma: [0,1]\to T_K$ be a closed curve in $T_K$. Since its image is compact there exists $\e>0$ such that $B_2^{2n}(\gamma(t), \e)\subset T_K$ for all $t\in [0,1]$. It follows from (\ref{F_local_bound}), that $F$ is bounded on $\gamma([0,1])$, and hence by the holomorphicity of $f$ and (\ref{f2}), 
\begin{equation}\label{mor_c}
    \int_\gamma F \dif w= \int_{\R^n}\left(\int_\gamma f(w-u)\dif w\right) h(u)\dif u=0.
\end{equation}
{\it Step 3: continuity.}
It remains to show that $F$ is continuous since then by (\ref{mor_c}) and Morera's theorem $F$ is holomorphic \cite[p. 122]{ahlfors}. For $w\in T_K$, let $\e>0$ such that $B_2^{2n}(w,2\e)\subset T_K$ and $z\in B_2^{2n}(0,\e)$. As in (\ref{F_local_bound}),
\begin{equation*}
    |F(w+z)- F(w)|= \left|\int_{\R^n}[f(w+z-u)- f(w-u)] \eta(u) \dif u\right|\leq \frac{2\|f\|_{L^2(T_K)}\|\eta\|_{1}}{\e^{n} \sqrt{|B_2^{2n}(0,1)|}}, 
\end{equation*}
because $B_2^{2n}(w+z, \e)\subset T_K$ for all $z\in B_2^{2n}(0,\e)$. As a result, dominated convergence applies \cite[\S 2.24]{folland},
\begin{equation*}
    \lim_{z\to 0}\left[ F(w+z)-F(w)\right]= 
    \int_{\R^n}\lim_{z\to 0}
    [f(w+z-u)-f(w-u)]\eta(u)\dif u= 0,
\end{equation*}
because $f$ is holomorphic, and hence continuous. 

Since $F$ is continuous and (\ref{mor_c}) holds, by Morera's theorem $F$ is holomorphic \cite[p. 122]{ahlfors}.
\end{proof}

For $f\in A^2(T_K)$ and $y\in\mathrm{int}\,K$, denote
$$
f_y(\xi)\defeq f(\xi+\sqrt{-1}y).
$$
Berndtsson claims in a more general setting (replacing $A^2(T_K)$ by $A^2(e^{-2\phi})$, for $\phi$ a convex function) that the class of functions in $f\in A^2(T_K)$ with compactly supported Fourier transform $\hat{f}_y$ for at least one $y\in \mathrm{int}\,K$ is dense in $A^2(T_K)$ \cite[p. 405]{berndtsson2} and gives a brief sketch of a proof. Amplifying his ideas, set
\begin{equation}
\label{CcalEq}
    \mathcal{C}\defeq \{f\in A^2(T_K): \widehat{f_y} \text{ is compactly supported for all } y\in\mathrm{int}\,K\}. 
\end{equation}
\begin{lemma}\label{dense_in_A^2}
$\mathcal{C}$
is dense in $A^2(T_K)$. 
\end{lemma}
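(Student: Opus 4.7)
The plan is to approximate any $f \in A^2(T_K)$ by convolving it in the real variables with an approximate identity whose Fourier transform is compactly supported. Concretely, I would fix a bump $\psi \in C_c^\infty(\R^n)$ with $0 \leq \psi \leq 1$, $\psi \equiv 1$ on $B_2^n(0,1)$, $\mathrm{supp}\,\psi \subset B_2^n(0,2)$, set $\psi_R(\zeta) \defeq \psi(\zeta/R)$ for $R>0$, and take $\eta_R$ to be the inverse Fourier transform of $\psi_R$. Since $\psi_R \in C_c^\infty(\R^n)$, the function $\eta_R$ is Schwartz class, hence in $L^1(\R^n)$, with $\widehat{\eta_R} = \psi_R$ compactly supported in $B_2^n(0,2R)$ and $\psi_R \to 1$ pointwise with $|\psi_R| \le 1$. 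The approximants are
\begin{equation*}
F_R(w) \defeq \int_{\R^n} f(w-u)\,\eta_R(u)\,du,
\end{equation*}
which by Lemma \ref{bob_intermediate_lemma} are automatically holomorphic on $T_K$.

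The next step is to verify that $F_R \in A^2(T_K)\cap\mathcal{C}$. For each $y \in \mathrm{int}\,K$ the slice satisfies $(F_R)_y(\xi) = (f_y \ast \eta_R)(\xi)$, so Young's convolution inequality yields $\|(F_R)_y\|_{L^2(\R^n)} \le \|\eta_R\|_{L^1}\|f_y\|_{L^2(\R^n)}$. Integrating over $y$ and using that $f_y \in L^2(\R^n)$ for a.e.\ $y$ (which follows from $f \in A^2(T_K)$ and Tonelli) gives $\|F_R\|_{L^2(T_K)} \le \|\eta_R\|_{L^1}\|f\|_{L^2(T_K)} < \infty$. Taking Fourier transforms in $\xi$, $\widehat{(F_R)_y}(\zeta) = \widehat{f_y}(\zeta)\psi_R(\zeta)$ is supported in $B_2^n(0,2R)$ uniformly in $y$, so $F_R \in \mathcal{C}$.

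Finally, $F_R \to f$ in $L^2(T_K)$ will follow from two applications of dominated convergence. Plancherel on the slice gives
\begin{equation*}
\|f_y-(F_R)_y\|_{L^2(\R^n)}^2 = (2\pi)^{-n}\int_{\R^n}|\widehat{f_y}(\zeta)|^2\,|1-\psi_R(\zeta)|^2\,d\zeta,
\end{equation*}
whose integrand converges to $0$ pointwise and is dominated by $4|\widehat{f_y}|^2 \in L^1(\R^n)$, so the inner norm tends to $0$ for a.e.\ $y$ as $R\to\infty$. A second dominated-convergence argument in the outer integral $\int_{\mathrm{int}\,K}\|f_y-(F_R)_y\|_{L^2(\R^n)}^2\,dy$, with integrable dominator $4\|f_y\|_{L^2(\R^n)}^2$, then gives $\|f-F_R\|_{L^2(T_K)} \to 0$. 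The main technical point is the bookkeeping of this two-layer dominated convergence and the use of slice-wise $L^2$-integrability of $f \in A^2(T_K)$; once Lemma \ref{bob_intermediate_lemma} is in hand the rest is standard Fourier analysis.
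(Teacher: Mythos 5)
Your proof is correct and follows the same overall plan as the paper's: convolve $f$ in the real variables with an $L^1$ kernel whose Fourier transform is a compactly supported smooth cutoff, invoke Lemma \ref{bob_intermediate_lemma} for holomorphicity, identify $\widehat{(F_R)_y}=\widehat{f_y}\,\psi_R$ on each slice, and show $L^2(T_K)$-convergence by a two-layer dominated convergence argument. Where you differ is in the technical execution, and the difference is worth noting. The paper takes care to make its kernel $\eta_\e$ a \emph{non-negative} probability density (by setting $\psi=\chi\ast\chi$, so that $\hat\psi=\hat\chi^2\ge 0$ and $\eta_\e\propto\hat\psi(\cdot/\e)\ge 0$ with $\int\eta_\e=1$); this positivity is then used to bound $\|f_\e\|_{L^2(T_K)}\le\|f\|_{L^2(T_K)}$ via Cauchy--Schwarz and to invoke the standard approximate-identity theorem \cite[Theorem 8.14]{folland} for slice-wise convergence. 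You dispense with positivity entirely: $\eta_R=\mathcal{F}^{-1}\psi_R$ may change sign, but Young's inequality $\|f_y\ast\eta_R\|_{L^2}\le\|\eta_R\|_{L^1}\|f_y\|_{L^2}$ already gives the $L^2(T_K)$ bound, and Plancherel together with $|1-\psi_R|\le 2$ replaces the approximate-identity theorem by an explicit dominated convergence argument on the Fourier side. Your version is a touch more streamlined, since the $\chi\ast\chi$ construction exists only to arrange the sign, which your argument never uses. Both routes are sound. One small point that both you and the paper state but do not spell out: the definition of $\mathcal{C}$ asks for $\widehat{f_y}$ compactly supported \emph{for all} $y\in\mathrm{int}\,K$, which requires knowing $f_y\in L^2(\R^n)$ for every $y$, not just almost every $y$; this does hold (integrate the subharmonic mean-value inequality over the $\xi$-slice to get $\|f_y\|_{L^2(\R^n)}\lesssim_{d(y,\partial K)}\|f\|_{L^2(T_K)}$), and alternatively the paper notes after Proposition \ref{pw-prop4} that density of the larger class $\widetilde{\mathcal{C}}$ (compact support for \emph{some} $y$) is all that is needed.
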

\begin{proof}
Set
\begin{equation*}
    \chi (x)\defeq \begin{cases}
    e^{-\frac{1}{1-|x|^2}}, |x|<1, \\
    0, \text{ otherwise}. 
    \end{cases}
\end{equation*}
Note $\chi\in C^\infty(\R^n)$, is supported on $\overline{B_2^n(0,1)}$ with $0\leq \chi\leq 1$, so $\chi \in L^1(\R^n)$. Let also
\begin{equation*}
    \psi(x)\defeq (\chi\ast\chi)(x)= \int_{\R^n}\chi(x-u)\chi(u)\dif u,
\end{equation*}
is smooth, non-negative, supported on $\overline{B_2^n(0,2)}$, with $\hat{\psi}= (\hat{\chi})^2\geq 0$ \cite[Theorem 8.22(c)]{folland}. Moreover, since $0\leq \chi\leq 1$, $\psi$ is bounded by
\begin{equation}\label{nazeq2}
    \psi(x)= \int_{\R^n}\chi(x-u)\chi(u)\dif u\leq \int_{\R^n}\chi(u)\dif u= \|\chi\|_{L^1}.
\end{equation}
By (\ref{planch_eq}), $\hat{\chi}\in L^2(\R^n)$ since $\chi\in L^1(\R^n)\cap L^2(\R^n)$, with $\|\hat{\chi}\|_{L^2}= (2\pi)^{n/2} \|\chi\|_{L^2}$. As a result, $\hat{\psi}\in L^1(\R^n)$ with $\|\hat\psi\|_{L^1}= \|\hat{\chi}\|_{L^2}^2= (2\pi)^n\|\chi\|_{L^2}^2$. For $\e>0$, let
\begin{equation*}
    \eta_\e(x)\defeq \frac{\e^n}{\int\hat{\psi}} \hat{\psi}\left(x/\e\right)= \frac{\e^n}{(2\pi)^n \|\chi\|_{L^2}^2} \hat{\chi}\left( x/\e\right)^2\in C^\infty(\R^n). 
\end{equation*}
Note $\eta_\e$ is non-negative and
\begin{equation}\label{nazeq101}
    \int\eta_\e=1. 
\end{equation}
By (\ref{fourier_inv}), 
\begin{equation*}
    \widehat{\eta_\e}(\xi)\defeq \int_{\R^n}\eta_\e(x)e^{-\sqrt{-1}\langle \xi,x\rangle}\dif x= \frac{\e^n}{\int \hat{\psi}}(2\pi)^n \psi\left(-\xi/\e\right),
\end{equation*}
is smooth and supported on $\overline{B_2^n(0,2\e)}$. Let $f\in A^2(T_K)$. By Lemma \ref{bob_intermediate_lemma},
\begin{equation*}
    f_\e(w)\defeq \int_{\R^n} f(w-u) \eta_\e(u)\dif u,
\end{equation*}
is holomorphic in $T_K$.
Moreover, by Cauchy--Schwarz, (\ref{tonelli}), and (\ref{nazeq101}),
\begin{equation}\label{naz_eqd1}
\begin{aligned}
    \|f_\e\|_{L^2(T_K)}^2=\int_{T_K}|f_\e(w)|^2\dif \lambda(w)& = \int_{T_K}\left| \int_{\R^n} f(w-u)\eta_\e (u)\dif u\right|^2 \dif\lambda(w)\\
    &\leq \int_{T_K}\left(\int_{\R^n} |f(w-u)|^2 \eta_\e(u)\dif u\right) \left( \int_{\R^n} \eta_\e(u)\right)\dif\lambda(w)
    \\&= \int_{T_K}\int_{\R^n}|f(w-u)|^2 \eta_\e(u)\dif u\dif \lambda(w)
    \\&= \|f\|^2_{L^2(T_K)} \int \eta_\e= \|f\|_{L^2(T_K)}^2.
\end{aligned}
\end{equation}
Therefore, $f_\e\in A^2(T_K)$. Furthermore, $f_\e$ has compactly supported Fourier transform for all $y\in \mathrm{int}\,K$, since $(f_\e)_y= f_y\ast \eta_\e$, thus $\widehat{(f_\e)_y}= \widehat{f_y}\widehat{\eta_\e}$ \cite[Theorem 8.22]{folland}, which is compactly supported because $\widehat{\eta_\e}$ is. 

It remains to show that $f_\e$ $L^2$-converges to $f$. Observe,
\begin{equation*}
\begin{aligned}
    \|f_\e-f\|^2_{L^2(T_K)}&= \int_{\mathrm{int}\,K}\int_{\R^n} |f_\e(\xi+\sqrt{-1}y)- f(\xi+\sqrt{-1}y)|^2\dif \xi\dif y\\
    &= \int_{\mathrm{int}\,K} \|(f_\e)_y- f_y\|^2_{L^2(\R^n)}\dif y.
\end{aligned}
\end{equation*}
But,
$\lim_{\e\to 0}\|(f_\e)_y-f_y\|_{L^2(\R^n)}^2= 0$, for almost all $y\in\mathrm{int}\,K$ \cite[Theorem 8.14]{folland}, and
by (\ref{naz_eqd1}) 
$\|(f_\e)_y-f_y\|_{L^2(\R^n)}^2\leq 4\|f_y\|_{L^2(\R^n)}^2$,
 that is integrable because
$
\int_{\mathrm{int}\,K}\|f_y\|_{L^2(\R^n)}^2\dif y= \|f\|_{L^2(T_K)}^2.
$
Combining this and dominated convergence \cite[\S 2.24]{folland},
gives
$
    \lim_{\e\to 0}\|f_\e-f\|_{L^2(T_K)}=~0.
$
\end{proof}

The next argument is due to Berndtsson \cite[pp. 404--405]{berndtsson2}. 
\begin{proof}[Proof of Proposition \ref{pw-prop4}]
By Propositions \ref{pw-prop1}--\ref{pw-prop3}, $\mathrm{PW}: L^2(\R^n, \tilde{h}_K)\to A^2(T_K)$ is an isometry. Thus, as remarked at the beginning of \S\ref{nazsubsub12}, to show it is surjective
it suffices to show that its image is dense. By Lemma \ref{dense_in_A^2}, it is enough to prove the theorem for $f\in A^2(T_K)$ with $\widehat{f_y}$ compactly supported for all $y\in\mathrm{int}\,K$. Let $f\in A^2(T_K)$ be such a function, and write $f_y(\xi)\defeq f(\xi+\sqrt{-1}y)$. Since 
\begin{equation*}
    \|f\|_{L^2(T_K)}^2= \int_{\mathrm{int}\,K}\int_{\R^n}|f(\xi+\sqrt{-1}y)|^2\dif x\dif y<\infty, 
\end{equation*}
$f_y$ is $L^2(\R^n)$-integrable for almost all $y\in\mathrm{int}\,K$. In particular, there exists some $y_0\in \mathrm{int}\,K$ such that $f_{y_0}(\xi)\defeq f(\xi+\sqrt{-1} y_0)$ is $L^2(\R^n)$-integrable. By Fourier inversion (\ref{fourier_inv}),
\begin{equation}\label{pw_eq1}
    f_{y_0}(\xi)= \frac{1}{(2\pi)^n}\int_{\R^n}\widehat{f_{y_0}}(x)e^{\sqrt{-1}\langle \xi,x\rangle}\dif x.
\end{equation}
By assumption, $\widehat{f_{y_0}}\in L^2(\R^n)$ is compactly supported. Therefore, $\widehat{f_{y_0}}(x)e^{\langle x,y_0\rangle}\in L^2(\R^n)$ is also compactly supported and, in particular, lies in $L^2(\R^n, \tilde{h}_K)$. By Propositions \ref{pw-prop1}--\ref{pw-prop3} then,
\begin{equation}\label{pw_eq2}
    F(w)\defeq \mathrm{PW}((2\pi)^{-\frac{n}{2}} \widehat{f_{y_0}}(x)e^{\sqrt{-1}\langle x,y_0\rangle})(w)= \frac{1}{(2\pi)^n}\int_{\R^n} \widehat{f_{y_0}}(x)e^{\langle x,y_0\rangle} e^{\sqrt{-1}\langle x,w\rangle}\dif x,
\end{equation}
is well-defined and holomorphic in $T_K$. 
By (\ref{pw_eq1}) and (\ref{pw_eq2}),
\begin{equation*}
    F(\xi+\sqrt{-1}y_0)=  \frac{1}{(2\pi)^n}\int_{\R^n}\widehat{f_{y_0}}(x) e^{\sqrt{-1}\langle \xi,x\rangle}\dif x= f(\xi+\sqrt{-1}y_0), 
\end{equation*}
i.e., $F$ agrees with $f$ on $\R^n\times \{y_0\}$. Since they are both holomorphic, by analytic continuation $f\equiv F= \mathrm{PW}((2\pi)^{-\frac{n}{2}} \widehat{f_{y_0}}(x)e^{\sqrt{-1}\langle x,y_0\rangle})$, as desired. 
\end{proof}

\begin{remark}
In fact, restricting to the larger family (recall \eqref{CcalEq})
\begin{equation*}
    \widetilde{\mathcal{C}}\defeq \{f\in A^2(T_K): \widehat{f_y} \text{ is compactly supported for some } y\in\mathrm{int}\,K\}
    \supset \mathcal{C},
\end{equation*}
suffices for the proof of Proposition \ref{pw-prop4} above. This is because for $y_0\in\mathrm{int}\,K$ such that $\widehat{f_{y_0}}$ is compactly supported, by (\ref{planch_eq}), $\xi\mapsto f(\xi+iy_0) \in L^2(\R^n)$,
and hence (\ref{pw_eq1}) holds.
\end{remark}

\begin{remark}
Perhaps a more intuitive proof for Proposition \ref{pw-prop4} would be the following. Take an $f\in A^2(T_K)$. Assume that $f_y\in L^2(\R^n)$
for all $y\in \mathrm{int}\,K$. By (\ref{fourier_inv}), 
\begin{equation}\label{naz_eq102}
    f(\xi+\sqrt{-1}y)= \frac{1}{(2\pi)^n}\int_{\R^n} \widehat{f_{y}}(x) e^{\sqrt{-1}\langle x,\xi\rangle}\dif x= \frac{1}{(2\pi)^n}\int_{\R^n} \widehat{f_y}(x) e^{\langle x,y\rangle} e^{\sqrt{-1}\langle x, \xi+\sqrt{-1}y\rangle}\dif x.
\end{equation}
In view of (\ref{naz_eq102}), let $g(x,y)\defeq \widehat{f_y}(x) e^{\langle x,y\rangle}$. For $g$ independent of $y$, $f= \mathrm{PW}(g)$ as desired. This is where the holomorphicity of $f$ comes into play. By (\ref{fourier_transform}) and (\ref{naz_eq102}),
\begin{equation*}
    g(x,y)= \frac{1}{(2\pi)^n}\int_{\R^n} f(\xi+ \sqrt{-1}y) e^{\langle x,y\rangle} e^{-\sqrt{-1}\langle \xi, x\rangle}\dif \xi. 
\end{equation*}
Assuming that one can differentiate under the integral sign, 
\begin{equation}\label{naz_eq103}
\begin{aligned}
    \frac{\partial g}{\partial y}&= \frac{1}{(2\pi)^n} \int_{\R^n}\left(\frac{\partial f}{\partial y}+ x f\right) e^{\langle x,y\rangle} e^{-\sqrt{-1}\langle \xi, x\rangle}\dif \xi. 
\end{aligned}
\end{equation}
Moreover, $f$ is holomorphic, thus $\partial f/\partial \xi= -\sqrt{-1}\partial f/\partial y$, i.e., 
\begin{equation}\label{naz_eq104}
    \begin{aligned}
        \int_{\R^n}\frac{\partial f}{\partial y} e^{\langle x,y\rangle} e^{-\sqrt{-1}\langle \xi, x\rangle}\dif \xi&= \sqrt{-1}\int_{\R^n}\frac{\partial f}{\partial \xi}e^{\langle x,y\rangle} e^{-\sqrt{-1}\langle \xi, x\rangle}\dif \xi  \\
        &= -\int_{\R^n} xf e^{\langle x,y\rangle} e^{-\sqrt{-1}\langle \xi, x\rangle}\dif \xi, 
    \end{aligned}
\end{equation}
by integration by parts. It follows from (\ref{naz_eq103}) and (\ref{naz_eq104}) that $\partial g/\partial y=0$, that is, $g$ is independent of $y$. Nonetheless, several assumptions were made, that may not hold in general, i.e., $f_y$ is $L^2$-integrable for all $y\in \mathrm{int}\,K$ and taking the derivatives under the integral,
but this can be made rigorous
\cite[p. 94]{blocki}. 
\end{remark}

\subsection{Proof of Proposition \ref{nazprop1}}
\label{nazsub2}

With the Paley--Wiener correspondence established (Theorem \ref{pw-thm}) the proof of the 
lower bound on the Mahler volume in terms of the Bergman kernel
conceptually proceeds as follows. 

\begin{itemize}
    \item For $a\in\mathrm{int}\,K$, $\M(K-a)$ is the product of $|K|$ with $n!|(K-a)^\circ|$. The latter equals to the integral of $e^{-h_{K-a}(x)}$, where $h_K$ is the support function of $K$ (Claim \ref{integral_polar_support}). 
    
    \item Jensen's inequality provides a lower bound $e^{h_{K-a}(x)}\leq 2^n e^{\tilde{h}_{K-a}(2x)}$ 
    (Lemma \ref{J_K-lower_bound}).
    
     \item Using the Paley--Wiener correspondence established in the previous section one may verify a formula for $\mathcal{K}_{T_K}(z,w)$ so that $(2\pi)^n |K|\mathcal{K}_{T_K}(\sqrt{-1}a, \sqrt{-1}a)= \int_{\R^n} e^{-\tilde{h}_{K-a}(-2x)}\dif x$ on the diagonal
     (Lemma \ref{bk-tube}). 
     
     \item By the first step, $\M(K-a)= |K|\int_{\R^n} e^{-h_{K-a}}$ which, by the previous two steps, is bounded below by $\pi^n|K|^2 \mathcal{K}_{T_K}(\sqrt{-1}a, \sqrt{-1}a)$ proving Proposition \ref{nazprop1}.
\end{itemize}

The following is a well-known formula for $|K^\circ|$ in terms of $h_K$ \cite[(2.3)]{hultgren}. 
\begin{claim}\label{integral_polar_support}
For a convex body $K\subset \R^n$ 
satisfying (\ref{intKeq}),
$
    \int_{\R^n}e^{-h_K(y)}\dif y= n!|K^\circ|. 
$
\end{claim}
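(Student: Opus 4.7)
The plan is to recognize the support function $h_K$ as the gauge (Minkowski functional) of the polar body and then apply a standard layer cake / Fubini argument to convert the integral into a one-dimensional moment computation.

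First, since $0 \in \mathrm{int}\, K$, the polar $K^\circ$ is a compact convex body with $0$ in its interior, and one has the identification
\[
\{y \in \R^n : h_K(y) \le t\} = t K^\circ \quad\text{for every } t \ge 0.
\]
Indeed, by homogeneity $h_K(y) \le t$ is equivalent to $\langle x, y/t\rangle \le 1$ for all $x \in K$ when $t > 0$, i.e., $y/t \in K^\circ$; the case $t=0$ uses $0 \in \mathrm{int}\, K$ to force $y = 0$. Note also $h_K \ge 0$ on $\R^n$ since $0 \in K$.

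Next I would apply the layer cake representation $e^{-h_K(y)} = \int_{h_K(y)}^\infty e^{-t}\dif t$ and invoke Tonelli (Claim \ref{fubini-tonelli}(i), since everything is nonnegative) to swap integrals:
\[
\int_{\R^n} e^{-h_K(y)}\dif y
= \int_{\R^n} \int_0^\infty \mathbf{1}_{\{h_K(y) \le t\}} e^{-t}\dif t\, \dif y
= \int_0^\infty e^{-t} \left|\{y : h_K(y) \le t\}\right|\dif t.
\]
Substituting the identification above and using the homogeneity of Lebesgue measure, $|tK^\circ| = t^n |K^\circ|$, the right-hand side becomes $|K^\circ|\int_0^\infty e^{-t} t^n\, \dif t = n!\, |K^\circ|$, using the standard Gamma integral $\Gamma(n+1) = n!$.

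There is no real obstacle: the only subtlety is verifying the level set identity and ensuring $K^\circ$ is bounded so that $|tK^\circ| < \infty$, both of which follow at once from the assumption $0 \in \mathrm{int}\, K$. The Tonelli application is immediate because the integrand $e^{-t}\mathbf{1}_{\{h_K(y)\le t\}}$ is nonnegative and measurable on $\R^n \times (0,\infty)$.
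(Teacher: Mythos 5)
Your proof is correct. One thing worth noting: the paper does not actually prove Claim \ref{integral_polar_support} — it cites it as a well-known formula, referencing Hultgren's thesis. So there is no in-paper argument to compare against; your layer-cake/Tonelli derivation fills that gap cleanly. All the key points are handled properly: the identification $\{y : h_K(y) \le t\} = tK^\circ$ via homogeneity of $h_K$, the nonnegativity of $h_K$ from $0 \in K$, and the boundedness of $K^\circ$ from $0 \in \mathrm{int}\,K$, which makes $|tK^\circ| = t^n|K^\circ|$ finite so the final $\Gamma$-integral converges. The common alternative is to pass to polar coordinates and integrate the radial variable first, giving $\int_{\R^n} e^{-h_K(y)}\dif y = (n-1)!\int_{S^{n-1}} h_K(\theta)^{-n}\dif\theta$ and then using $|K^\circ| = \tfrac1n\int_{S^{n-1}} h_K(\theta)^{-n}\dif\theta$; your sublevel-set version avoids introducing the radial function of $K^\circ$ and is, if anything, a bit more self-contained.
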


Jensen's inequality gives a lower bound on $e^{-h_{K-b(K)}(x)}$ in terms of $\tilde{h}_K$. For a subset $S\subset \R^n$ denote by 
\begin{equation*}
    \mathbf{1}_S(x)\defeq \begin{cases}
    1 & \text{ for } x\in S, \\
    0 & \text{ otherwise}.
    \end{cases}
\end{equation*}
\begin{lemma}\label{J_K-lower_bound}
For $K\subset \R^n$ a convex body and $a\in K$,
$
    e^{h_{K-a}(x)}\leq 2^n e^{\tilde{h}_{K-a}(2x)}. 
$
\end{lemma}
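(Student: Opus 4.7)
The plan is to combine a midpoint-convexity observation for $L := K-a$ (which contains the origin since $a\in K$) with Jensen's inequality for the exponential. After unraveling the definitions, the claim is equivalent to
\begin{equation*}
|L|\,e^{h_L(x)} \;\leq\; 2^n \int_L e^{2\langle x,y\rangle}\,dy.
\end{equation*}

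First, I would fix $y^\ast\in L$ attaining $\langle x,y^\ast\rangle = h_L(x) =: M$; such a maximizer exists by compactness of $L$. Because $L$ is convex and contains both $0$ and $y^\ast$, the midpoint body $L' := \tfrac{1}{2} L + \tfrac{1}{2} y^\ast$ is a subset of $L$ with Lebesgue measure $2^{-n}|L|$. Restricting the right-hand integral to $L'\subset L$ and substituting $y = (u+y^\ast)/2$, which contributes Jacobian factor $2^{-n}$ and transforms the exponent via $2\langle x,y\rangle = \langle x,u\rangle + M$, yields
\begin{equation*}
\int_L e^{2\langle x,y\rangle}\,dy \;\geq\; \int_{L'} e^{2\langle x,y\rangle}\,dy \;=\; \frac{e^M}{2^n}\int_L e^{\langle x,u\rangle}\,du.
\end{equation*}

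Next, I would invoke Jensen's inequality applied to the convex function $\exp$ and the uniform probability measure $|L|^{-1}\,du$ on $L$; together with the hypothesis $0\in L$ this closes the argument by establishing $\int_L e^{\langle x,u\rangle}\,du \geq |L|$. Chaining the two bounds then yields $|L|\,e^{h_L(x)} \leq 2^n\int_L e^{2\langle x,y\rangle}\,dy$, which is the lemma.

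The main obstacle, and the only step that is not purely mechanical, is this final Jensen closure: raw Jensen applied to $\exp$ only delivers $|L|^{-1}\int_L e^{\langle x,u\rangle}\,du \geq e^{\langle x,b(L)\rangle}$ in terms of the barycenter $b(L)$, so the heart of the argument lies in genuinely exploiting the inclusion $0\in L$ (equivalently $a\in K$) to upgrade this to the uniform lower bound $\geq 1$ that the chain of inequalities requires. The geometric reduction of the first two steps, by contrast, is standard Minkowski-type manipulation and reduces the lemma to this one convex-analytic averaging statement about $\exp$ on bodies containing the origin.
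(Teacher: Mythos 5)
Your geometric reduction is correct and, in fact, cleaner than the paper's version of the same step: centering the half-body $L'=\tfrac12 L+\tfrac12 y^\ast$ at a maximizer $y^\ast$ of $\langle x,\cdot\rangle$ over $L$ uses only convexity and produces the factor $e^{h_L(x)}$ directly from the change of variables. The problem is the closing step. The inequality $\int_L e^{\langle x,u\rangle}\,du\geq |L|$ does \emph{not} follow from $0\in L$: take $L=[-10,1]\subset\R$ and $x=1$, so that $\int_L e^{u}\,du=e-e^{-10}<11=|L|$. Jensen gives $|L|^{-1}\int_L e^{\langle x,u\rangle}\,du\geq e^{\langle x,b(L)\rangle}$, and this is $\geq 1$ for \emph{all} $x$ precisely when $b(L)=0$; the hypothesis $0\in L$ is strictly weaker and cannot be upgraded. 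So the step you yourself flag as ``the heart of the argument'' is not merely unproved --- it is false as you state it.

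This is not an artifact of your method: the lemma itself fails for general $a\in K$. Take $K=[-\tfrac12,\tfrac12]$, $a=-\tfrac12\in K$, $x=-1$; then $K-a=[0,1]$, $e^{h_{K-a}(-1)}=e^{0}=1$, while $2\,e^{\tilde h_{K-a}(-2)}=2\int_0^1 e^{-2y}\,dy=1-e^{-2}<1$. (The paper's own proof is also defective at the analogous point: the inclusion $\tfrac{K}{2}+\tfrac{y+a}{2}\subset K$ invoked there requires $y+a\in K$, not merely $\tfrac{y+a}{2}\in K$, and fails for $y=a=-\tfrac12$ in this example.) The correct hypothesis is $b(K-a)=0$, i.e.\ $a=b(K)$, which is the only case used downstream in the proof of Theorem \ref{nazarov_lower_bound}. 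Under that hypothesis your argument closes immediately, since then $b(L)=0$ and Jensen yields $|L|^{-1}\int_L e^{\langle x,u\rangle}\,du\geq e^{\langle x,b(L)\rangle}=1$. In short: restricted to $a=b(K)$ your proof is complete and is arguably the right way to prove the statement; as written for all $a\in K$ it has a gap that no argument can fill, because the asserted inequality is false there.
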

\begin{proof}
Assume $b(K)=0$ for a moment. Note that
\begin{equation*}
    a= b(K)+a =\frac{1}{|K|}\int_K (u+a)\dif u= \frac{1}{|K|}\int_{K+a} v\dif v.
\end{equation*}
Fix $y\in K$, $x\in \R^n$, and let $F(u)\defeq e^{\langle u,x\rangle}$. By Jensen's inequality \cite[Remark A.2.3]{artstein-giannopoulos-milman}, for the probability measure $\frac{1}{|K|}\int_{K+a+y}\dif v$ and the convex function $F$, 
\begin{equation*}
    \begin{aligned}
        e^{\langle y-a, x\rangle}&= e^{-2\langle a,x\rangle} e^{\langle y+a,x \rangle}= e^{-2\langle a,x\rangle} e^{\langle \frac{1}{|K|}\int_K (u+y+a)\dif u, x\rangle}= e^{-2\langle a,x\rangle} F\left( \frac{1}{|K|}\int_K u+y+a \dif u\right) \\
        &\leq e^{-2\langle a,x\rangle} \frac{1}{|K|}\int_K F(u+y+a)\dif u=  e^{-2\langle a,x\rangle}\frac{1}{|K|}\int_K e^{\langle u+y+a, x\rangle}\dif u \\
        &= e^{-2\langle a,x\rangle}\frac{1}{|K|}\int_K e^{2\langle \frac{u}{2}+\frac{y+a}{2}, x\rangle}\dif u= e^{-2\langle a,x\rangle}\frac{2^n}{|K|}\int_{\frac{K}{2}+ \frac{y+a}{2}} e^{2\langle v,x\rangle}\dif v\\
        &\leq e^{-2\langle a,x\rangle} \frac{2^n}{|K|}\int_K e^{2\langle v,x\rangle}\dif v= \frac{2^n}{|K|}\int_K e^{\langle v-a, 2x\rangle}\dif v= 2^n e^{\tilde{h}_{K-a}(2x)}, 
    \end{aligned}
\end{equation*}
because $y,a\in K$ thus $(y+a)/2\in K$ and hence $K/2+ (y+a)/2\subset K$. Taking supremum over all $y\in K$ yields $e^{h_{K-a}(x)}\leq 2^n e^{\tilde{h}_{K-a}(2x)}$ as desired. 

In general, for any $a\in K$ write $a= a-b(K)+ b(K)$. By the previous case, since $a-b(K)\in K-b(K)$ and $b(K-b(K))=0$
\begin{equation*}
    e^{h_{K-a}(x)}= e^{h_{K-b(K)- (a-b(K))}(x)}\leq 2^n e^{\tilde{h}_{K-b(K)-(a-b(K))}(2x)}= e^{\tilde{h}_{K-a}(2x)}, 
\end{equation*}
as desired.
\end{proof}

The left-hand side of the inequality in Lemma \ref{J_K-lower_bound} appears in the integral representation of $\mathcal{K}_{T_K}(\sqrt{-1}a, \sqrt{-1}a)$ since on the diagonal one may explicitly compute 
\begin{equation}\label{nazeq121}
\begin{aligned}
    \mathcal{K}_{T_K}(\sqrt{-1} a, \sqrt{-1}a)&= \frac{1}{(2\pi)^n|K|}\int_{\R^n} e^{-\tilde{h}_{K-a}(-2x)}\dif x.
\end{aligned}
\end{equation}
This follows from the general formula for the Bergman kernel of a tube domain of a convex body $\mathcal{K}_{T_K}(z,w)$ (Lemma \ref{bk-tube} below) \cite[(1.2)]{hsin} \cite[Theorem 2.6]{rothaus},
and the following computation: 
\begin{equation*}
\begin{aligned}
  e^{2\langle a, x\rangle+ \tilde{h}_K(-2x)} &= e^{2\langle a,x\rangle} |K|\int_{K} e^{-2\langle x,y\rangle}\dif y= |K|\int_{K}e^{-2\langle x, y-a\rangle}\dif y\\
  &= |K-a|\int_{K-a} e^{-2\langle x,y\rangle}\dif y= e^{\tilde{h}_{K-a}(-2x)}.
 \end{aligned}
\end{equation*}

\begin{lemma}\label{bk-tube}
For a convex body $K\subset\R^n$, 
    \begin{equation*}
        \mathcal{K}_{T_K}(z,w)= \frac{1}{(2\pi)^n |K|}\int_{\R^n} e^{\sqrt{-1}\langle z-\overline{w}, x\rangle- \tilde{h}_K(-2x)}\dif x. 
    \end{equation*}
\end{lemma}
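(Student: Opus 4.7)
The plan is to transport the Bergman reproducing-kernel problem on $A^2(T_K)$ to $L^2(\R^n,\tilde h_K)$ via the Paley--Wiener isometry (Theorem \ref{pw-thm}), where identifying the kernel reduces to a direct comparison of integrands. Since $\mathrm{PW}$ is a bijective isometry, the Bergman kernel on the $T_K$ side corresponds to a unique ``reproducing function" on the $L^2(\R^n,\tilde h_K)$ side, which can be read off by inspection.

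Fix $w=\xi_w+\sqrt{-1}v_w\in T_K$. For $f=\mathrm{PW}(g)$ with $g\in L^2(\R^n,\tilde h_K)$, the evaluation
$$f(w)=\frac{1}{(2\pi)^{n/2}}\int_{\R^n}g(x)e^{\sqrt{-1}\langle w,x\rangle}\dif x$$
must match an $L^2(\tilde h_K)$-inner product $\langle g,g'_w\rangle_{L^2(\tilde h_K)}=|K|\int g(x)\overline{g'_w(x)}e^{\tilde h_K(-2x)}\dif x$. Comparing integrands, and using that $x\in\R^n$ so $\overline{e^{-\sqrt{-1}\langle\overline{w},x\rangle}}=e^{\sqrt{-1}\langle w,x\rangle}$, singles out the candidate
$$g'_w(x)\defeq\frac{1}{(2\pi)^{n/2}|K|}\,e^{-\sqrt{-1}\langle\overline{w},x\rangle}\,e^{-\tilde h_K(-2x)}.$$
I would first check that $g'_w\in L^2(\R^n,\tilde h_K)$: since $\tilde h_K$ is real-valued and $|e^{-\sqrt{-1}\langle\overline w,x\rangle}|^2=e^{-2\langle v_w,x\rangle}$, its squared weighted norm reduces to $\frac{1}{(2\pi)^n|K|}\int_{\R^n}e^{-2\langle v_w,x\rangle-\tilde h_K(-2x)}\dif x$, which is finite by Remark \ref{l2condition} (or directly by combining \eqref{1.1nazeq2} with Lemma \ref{sinh_lemma}).

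Next, I would plug $g'_w$ into the definition \eqref{pw_def} of $\mathrm{PW}$ and combine exponentials to obtain
$$\mathrm{PW}(g'_w)(z)=\frac{1}{(2\pi)^n|K|}\int_{\R^n}e^{\sqrt{-1}\langle z-\overline{w},x\rangle-\tilde h_K(-2x)}\dif x,$$
which is exactly the claimed formula; Proposition \ref{pw-prop3} then guarantees $\mathrm{PW}(g'_w)\in A^2(T_K)$. To finish, I would verify the reproducing property: for any $f=\mathrm{PW}(g)\in A^2(T_K)$, the norm-preserving Proposition \ref{pw-prop2} upgrades by the polarization identity to inner-product preservation, yielding
$$\langle f,\mathrm{PW}(g'_w)\rangle_{L^2(T_K)}=\langle g,g'_w\rangle_{L^2(\tilde h_K)}=\frac{1}{(2\pi)^{n/2}}\int_{\R^n}g(x)e^{\sqrt{-1}\langle w,x\rangle}\dif x=f(w).$$
By uniqueness of the reproducing kernel (from the Riesz representation in \S\ref{BergSec}), this forces $\mathcal{K}_{T_K}(\,\cdot\,,w)=\mathrm{PW}(g'_w)$, giving the formula.

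The main thing to be careful about is not a deep obstacle but rather the bookkeeping with complex conjugates that match $z-\overline w$ (rather than $z-w$) on the exponent, together with the polarization step that promotes the isometry of Proposition \ref{pw-prop2} from an equality of norms to an equality of Hermitian inner products; the latter is automatic for any isometry between complex Hilbert spaces, so no further analytic work beyond Theorem \ref{pw-thm} and Remark \ref{l2condition} is required.
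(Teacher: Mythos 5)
Your proof is correct and takes a genuinely cleaner route than the paper's. The paper defines $F(z,w)$ as the claimed integral formula, verifies $z\mapsto F(z,w)\in L^2(T_K)$ by a Plancherel/Tonelli computation, and then establishes the reproducing property by an explicit Parseval-plus-Fubini calculation, finally matching $F$ with $\mathcal{K}_{T_K}$ via conjugate symmetry and uniqueness. You instead work entirely on the $L^2(\R^n,\tilde h_K)$ side, where the reproducing element $g'_w$ for the evaluation functional $g\mapsto \mathrm{PW}(g)(w)$ can be read off by inspection, and then transport it across the unitary $\mathrm{PW}$ (norm-preservation upgraded to inner-product preservation by polarization). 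This buys you: no Parseval/Fubini bookkeeping, an immediate explanation of why the formula has the shape it does (it is literally $\mathrm{PW}(g'_w)$), and a clean appeal to Proposition \ref{pw-prop3} for holomorphicity of $F(\cdot,w)$ — a point the paper leaves somewhat implicit when it asserts at the end of (\ref{naz_eq200}) that $F$ is holomorphic in its first variable. The trade-off is that your argument uses Theorem \ref{pw-thm} more heavily as a black box (in particular the surjectivity, to ensure every $f\in A^2(T_K)$ is of the form $\mathrm{PW}(g)$), whereas the paper's computation is self-contained modulo the Paley--Wiener machinery it already set up. One small correction: the finiteness of $\|g'_w\|_{L^2(\tilde h_K)}$ reduces to $\int_{\R^n}e^{-2\langle v_w,x\rangle-\tilde h_K(-2x)}\dif x<\infty$, which is exactly \eqref{nazeq1}; Remark \ref{l2condition} bounds the square of that integrand and is the estimate the paper needs to run Plancherel, not the one you need — so your parenthetical citation to \eqref{1.1nazeq2} and Lemma \ref{sinh_lemma} is the right one to keep.
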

\begin{proof}
For $z= \xi+\sqrt{-1}y\in T_K$ and $w= a+\sqrt{-1} b\in T_K$, since $K$ is convex $(y+b)/2\in \mathrm{int}\,K$. Take $r>0$ such that $(y+b)/2+[-r,r]^n\subset \mathrm{int}\,K$. By (\ref{nazeq1}), 
\begin{equation}\label{nazeq122}
    \begin{aligned}
        \int_{\R^n}\left| e^{\sqrt{-1}\langle z-\overline{w}, x\rangle- \tilde{h}_K(-2x)}\right|\dif x&= \int_{\R^n} e^{-\langle y+b, x\rangle- \tilde{h}_{K}(-2x)}\dif x\\
        &= \int_{\R^n} e^{-2\langle\frac{y+b}{2}, x\rangle- \tilde{h}_{K}(-2x)}\dif x\leq \left( \frac{\pi^2}{8r^2}\right)^n.
    \end{aligned}
\end{equation}
As a result, by (\ref{nazeq122}),
\begin{equation}\label{nazeq123}
\begin{aligned}
    F(z,w)&\defeq \frac{1}{(2\pi)^n |K|}\int_{\R^n}e^{\sqrt{-1}\langle z-\overline{w}, x\rangle- \tilde{h}_{K}(-2x)}\dif x\\
    &= \frac{1}{(2\pi)^n |K|} \int_{\R^n} e^{-\langle y+b,x\rangle- \sqrt{-1}\langle a,x\rangle- \tilde{h}_{K}(-2x)} e^{\sqrt{-1}\langle \xi, x\rangle}\dif x
\end{aligned}
\end{equation}
converges in $\C$ for all $z,w\in T_K$. In particular, by Remark \ref{l2condition}, 
\begin{equation*}
   G(x)\defeq \frac{1}{|K|} e^{-\langle y+b, x\rangle- \sqrt{-1}\langle a,x\rangle- \tilde{h}_K(-2x)}, 
\end{equation*}
is $L^2$-integrable and, by (\ref{nazeq123}), it is the Fourier transform of $\xi\mapsto F(\xi+iy, w)$. Therefore, by (\ref{planch_eq}), $\xi\mapsto F(\xi+iy, w)$ is $L^2$-integrable with 
\begin{equation}\label{nazeq126}
    \int_{\R^n}|F(\xi+iy, w)|^2\dif\xi= (2\pi)^{-n} \int_{\R^n}|G(x)|^2\dif x= \frac{1}{(2\pi)^n |K|^2} \int_{\R^n} e^{-2\langle y+b, x\rangle-2\tilde{h}_{K}(-2x)}\dif x. 
\end{equation}
Integrating (\ref{nazeq126}) with respect to $y$, by (\ref{tonelli}) (since the integrand is positive), 
\begin{equation*}
\begin{aligned}
    \int_{T_K}|F(z,w)|^2\dif\lambda(z)&= \frac{1}{(2\pi)^n|K|^2} \int_{\mathrm{int}\,K} \int_{\R^n} e^{-2\langle y+b, x\rangle- 2\tilde{h}_K(-2x)}\dif x \dif y\\
    &= \frac{1}{(2\pi)^n|K|^2} \int_{\R^n} e^{-2\langle x,b\rangle-2\tilde{h}_{K}(-2x)}\left( \int_{\R^n} e^{-2\langle y,x\rangle}\dif y\right)\dif x \\
    &= \frac{1}{(2\pi)^n|K|^2} \int_{\R^n} e^{-2\langle x,b\rangle-2\tilde{h}_{K}(-2x)} |K| e^{\tilde{h}_K(-2x)}\dif x \\
    &= \frac{1}{(2\pi)^n|K|} \int_{\R^n} e^{-2\langle x,b\rangle- \tilde{h}_K(-2x)}\dif x= F(w,w)
\end{aligned}
\end{equation*}
is finite by (\ref{nazeq123}), i.e., $z\mapsto F(z, w)\in L^2(T_K)$.

Moreover, $F$ enjoys a reproducing property. To see why, let $f\in A^2(T_K)$. By Theorem \ref{pw-thm}, let $g\in L^2(\R^n, \tilde{h}_K)$ be such that $f(z)\defeq \mathrm{PW}(g)(z)= (2\pi)^{-n/2} \int_{\R^n} g(x)e^{\sqrt{-1}\langle z,x\rangle}\dif x$. In particular, $x\mapsto (2\pi)^\frac{n}{2} g(x) e^{-\langle x,y\rangle}$ is the Fourier transform of $\xi\mapsto f(\xi+ \sqrt{-1}y)$. By (\ref{parseval}), 
\begin{equation}\label{nazeq125}
\begin{aligned}
    \int_{\R^n} f(z)\overline{F(z,w)}\dif \xi&= (2\pi)^{-n} \int_{\R^n} (2\pi)^{\frac{n}{2}} g(x) e^{-\langle x,y\rangle} \overline{G(x)}\dif x\\ 
    &= \frac{1}{(2\pi)^{\frac{n}{2}}|K|} \int_{\R^n} g(x) e^{-\langle b,x\rangle+ \sqrt{-1}\langle a,x\rangle- \tilde{h}_K(-2x)}e^{-2\langle x,y\rangle} \dif x \\
    &= \frac{1}{(2\pi)^{\frac{n}{2}}|K|} \int_{\R^n} g(x) e^{\sqrt{-1}\langle w,x\rangle- \tilde{h}_K(-2x)} e^{-2\langle x,y\rangle}\dif x.
\end{aligned}
\end{equation}
Integrating (\ref{nazeq125}) with respect to $y$ over $\mathrm{int}\,K$, by (\ref{f2}),
\begin{equation*}
    \begin{aligned}
        \langle f, F(\cdot, w)\rangle_{L^2(T_K)}&= \int_{T_K} f(z)\overline{F(z,w)}\dif \lambda(z)= \int_{\mathrm{int}\, K}\int_{\R^n} f(z)\overline{F(z,w)}\dif \xi\dif y\\
        &= \frac{1}{(2\pi)^{\frac{n}{2}}|K|} \int_{\mathrm{int}\,K} \int_{\R^n} g(x) e^{\sqrt{-1}\langle w,x\rangle- \tilde{h}_K(-2x)} e^{-2\langle x,y\rangle}\dif x\dif y\\
        &= \frac{1}{(2\pi)^{\frac{n}{2}}|K|} \int_{\R^n} g(x) e^{\sqrt{-1}\langle w,x\rangle-\tilde{h}_K(-2x)}\left( \int_{\mathrm{int}\,K}e^{-2\langle x,y\rangle}\dif y\right)\dif x \\
        &= (2\pi)^{-\frac{n}{2}}\int_{\R^n}g(x) e^{\sqrt{-1}\langle w,x\rangle-\tilde{h}_K(-2x)} e^{\tilde{h}_K(-2x)}\dif x \\
        &= (2\pi)^{-\frac{n}{2}} \int_{\R^n} g(x) e^{\sqrt{-1}\langle w,x\rangle}\dif x= f(w).
    \end{aligned}
\end{equation*}
To justify the use of $(\ref{f2})$, by Cauchy--Schwarz 
\begin{equation*}
    \int_{T_K}|f(z)\overline{F(z,w)}|\dif\lambda(z)\leq \left(\int_{T_K} |f(z)|^2\dif\lambda(z) \right)^\frac12\left(\int_{T_K}|F(z,w)|^2\dif\lambda(z)\right)^\frac12
\end{equation*}
is finite.

As a result, by  
the reproducing properties of $\mathcal{K}_{T_K}$
and $F$,
\begin{equation}\label{naz_eq200}
   F(z,w)= \overline{F(w,z)}= \overline{\langle F(\cdot, z), \mathcal{K}_{T_K}(\cdot, w)\rangle}= \langle \mathcal{K}_{T_K}(\cdot, w), F(\cdot, z)\rangle= \mathcal{K}_{T_K}(z,w),
\end{equation}
because $F$ and $\mathcal{K}_{T_K}$ are holomorphic in the first variable.
\end{proof}

Combining (\ref{nazeq121}) with Claim \ref{integral_polar_support} and Lemma \ref{J_K-lower_bound} proves Proposition \ref{nazprop1}.

\begin{proof}[Proof of Proposition \ref{nazprop1}]
By Claim \ref{integral_polar_support}, Lemma \ref{J_K-lower_bound} and (\ref{nazeq121}),
\begin{equation*}
    \begin{aligned}
        \M(K-a)&= |K||(K-a)^\circ| \\
        &=  |K|\int_{\R^n}e^{-h_{K-a}(x)}\dif x \\
        &\geq  \frac{|K|}{2^n}\int_{\R^n} e^{-\tilde{h}_{K-a}(-2x)} \dif x\\ 
        &=  \frac{|K|}{2^n}(2\pi)^n |K| \mathcal{K}_{T_K}(\sqrt{-1}a, \sqrt{-1}a) \\
        &= \pi^n |K|^2 \mathcal{K}_{T_K}(\sqrt{-1}a, \sqrt{-1}a),
    \end{aligned}
\end{equation*}
as desired
\end{proof}

\section{Estimating the Bergman kernel}\label{nazsub3}\label{sec4}

This section proves Proposition \ref{nazprop2}. Conceptually,
here are the key ideas:
\begin{itemize}
    \item Lemma \ref{equivalent_norms} recalls the standard characterization of the Bergman kernel on the diagonal as a supremum involving $L^2$ holomorphic functions. This reduces the proof of 
    Proposition \ref{nazprop2} to finding such a function that equals $1$ at $\sqrt{-1}b(K)$ and has $L^2$ norm bounded above by $2^n|K|$.
    
    \item Lemma \ref{bk_ai}, proved in \S\ref{sec4.4}, establishes the affine invariance of $\mathcal{B}(K)$. This allows to displace $K$ by a convenient affine transformation for the remainder of the proof.
    
    \item John's theorem is used in Lemma \ref{naz_john_lemma} to place $K$ in better position via an affine transformation, while Santal\'o's inequality ensures good control on $K^\circ$ in the new position. 
    
    \item A weight function $\phi$ (\ref{weight_phi}) on $T_K$ is 
    constructed (Lemma \ref{properties_list}) satisfying the conditions of Proposition \ref{hormander}, with $\phi$  bounded from above, bounded from below away from the origin, and $e^{-\phi}$ is not integrable around the origin. 
    
    \item A smooth bump function $g:\C^n\to \C$ is constructed (Lemma \ref{g_estimate}) with controlled weighted $L^2$-norm over $T_K$ with respect to the weight function of the previous step. While not holomorphic, this bump function has all the other properties one wants in order to estimate the Bergman kernel using Lemma  \ref{equivalent_norms}.
    
    \item H\"ormander's theorem is used to solve for $h$ with $\bar\partial h=-\bar\partial g$
    and ``correct" $g$ to a holomorphic function 
    $f:=g+h$ on $T_K$ with the other properties intact, namely, $f(0)=1$ and bounded weighted $L^2$-norm. 
    This requires several auxiliary estimates, mainly: an earlier upper bound on the weight function (Lemma \ref{properties_list}~(ii)) guaranteeing that the non-weighted $L^2$-norm of $h$ is controlled; control on the volume of the support of $g$ (Lemma \ref{properties_list} (v)) 
     guaranteeing that the non-weighted $L^2$-norm of $g$ is controlled;
     prescribed singularity of $\phi$ at the origin guaranteeing that
     $h(0)=0$. Altogether,
    by Lemma \ref{equivalent_norms}, this yields the desired bound on the Bergman kernel up to subexponential terms (Lemma \ref{affine_nazprop2}). 
    
    \item Tensorization for Bergman kernels is used to eliminate the subexponential terms in the previous bound (Proposition \ref{bk_tensorization}), yielding Proposition \ref{nazprop2}. 
\end{itemize}

\subsection{Bergman kernel on the diagonal}
First, recall the standard characterization of the Bergman kernel on the diagonal in terms of the norm of the evaluation functional $A^2(T_K)\ni f\mapsto f(w)\in \C$ \cite[(5.2)]{hultgren}.
\begin{lemma}\label{equivalent_norms}
For $K\subset \R^n$ a convex body and $w\in T_K$,
    \begin{equation}\label{bk_eq1}
        \mathcal{K}_{T_K}(w,w)= \|\mathcal{K}_{T_K}(\cdot, w)\|^2_{L^2(T_K)}= \sup_{\substack{f\in A^2(T_K)\\ f\neq 0}}\frac{|f(w)|^2}{\|f\|^2_{L^2(T_K)}}.
    \end{equation}
\end{lemma}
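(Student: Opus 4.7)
The plan is to derive both equalities directly from the reproducing property \eqref{reproducingEq} and Cauchy--Schwarz, which is the classical two-line argument for any reproducing kernel Hilbert space; the only thing to check is that the ingredients already assembled in \S\ref{BergSec} are all that is needed.

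For the first equality, I would simply apply \eqref{reproducingEq} with $f=\mathcal{K}_{T_K}(\cdot,w)$, which lies in $A^2(T_K)$ by \eqref{bergmanL2}. This gives
\[
\mathcal{K}_{T_K}(w,w)=\mathcal{K}_{T_K}(\cdot,w)\big|_{\cdot=w}=\langle \mathcal{K}_{T_K}(\cdot,w),\mathcal{K}_{T_K}(\cdot,w)\rangle_{L^2(T_K)}=\|\mathcal{K}_{T_K}(\cdot,w)\|^2_{L^2(T_K)},
\]
where in the middle equality one uses the reproducing formula (noting that $\mathcal{K}_{T_K}(w,w)$ is real since it equals a squared norm, so there is no conjugation subtlety).

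For the second equality, for the $\le$ direction I would apply Cauchy--Schwarz to \eqref{reproducingEq}: for every $f\in A^2(T_K)$,
\[
|f(w)|^2=|\langle f,\mathcal{K}_{T_K}(\cdot,w)\rangle_{L^2(T_K)}|^2\le \|f\|^2_{L^2(T_K)}\,\|\mathcal{K}_{T_K}(\cdot,w)\|^2_{L^2(T_K)},
\]
so dividing by $\|f\|^2_{L^2(T_K)}$ and taking the supremum over $f\neq 0$ yields $\sup\le \|\mathcal{K}_{T_K}(\cdot,w)\|^2_{L^2(T_K)}$. For the $\ge$ direction I would plug the particular choice $f=\mathcal{K}_{T_K}(\cdot,w)$ into the supremum, which is admissible provided this function is nonzero; by the first equality and Theorem \ref{pw-thm} (which produces a nonzero element of $A^2(T_K)$ taking a prescribed nonzero value at any fixed point of $T_K$ via $\mathrm{PW}$ of a suitable bump), we have $\mathcal{K}_{T_K}(w,w)>0$ and hence $\mathcal{K}_{T_K}(\cdot,w)\not\equiv 0$. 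Then
\[
\sup_{0\neq f\in A^2(T_K)}\frac{|f(w)|^2}{\|f\|^2_{L^2(T_K)}}\ge \frac{|\mathcal{K}_{T_K}(w,w)|^2}{\|\mathcal{K}_{T_K}(\cdot,w)\|^2_{L^2(T_K)}}=\frac{\mathcal{K}_{T_K}(w,w)^2}{\mathcal{K}_{T_K}(w,w)}=\mathcal{K}_{T_K}(w,w),
\]
combining the first equality in the last step.

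There is no real obstacle here; the only point requiring care is observing that $\mathcal{K}_{T_K}(w,w)\neq 0$ so that division is legitimate in the last step, and this is essentially automatic because $A^2(T_K)$ separates points of $T_K$. The overall argument is a template that works for any reproducing kernel Hilbert space, and I would state the proof in roughly the three displayed lines above.
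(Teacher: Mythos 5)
Your proof is correct and takes essentially the same approach as the paper: the first equality by plugging $\mathcal{K}_{T_K}(\cdot,w)$ into the reproducing property, the upper bound on the supremum by Cauchy--Schwarz, and the lower bound by testing against $\mathcal{K}_{T_K}(\cdot,w)$ itself (the paper packages the last two steps as a chain of inequalities rather than two separate directions, but the content is identical). You are a bit more careful than the paper in noting that $\mathcal{K}_{T_K}(w,w)\neq 0$ is needed for the final division; this is indeed true (and follows e.g.\ from the explicit positive formula in Lemma~\ref{bk-tube}), though if $\mathcal{K}_{T_K}(\cdot,w)\equiv 0$ all three quantities in \eqref{bk_eq1} vanish and the lemma is vacuous, so the case distinction is harmless either way.
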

\begin{proof}
The first equality follows from (\ref{naz_eq200}).
By Cauchy--Schwarz for any $f\in A^2(T_K)$,
\begin{equation}\label{bk_eq2}
    |f(w)|= \left| \int_{T_K}f(z)\overline{\mathcal{K}_{T_K}(z,w)}\dif\lambda(z)\right|\leq \|\mathcal{K}_{T_K}(\cdot, w)\|_{L^2(T_K)}\|f\|_{L^2(T_K)},
\end{equation}
and hence, by (\ref{bk_eq2}) and the first equality of \eqref{bk_eq1},
\begin{equation*}
    \sup_{\substack{f\in A^2(T_K) \\ f\neq 0}}\frac{|f(w)|^2}{\|f\|_{L^2(T_K)}^2}\leq \|\mathcal{K}_{T_K}(\cdot, w)\|_{L^2(T_K)}^2= \frac{\mathcal{K}_{T_K}(w,w)^2}{\|\mathcal{K}_{T_K}(\cdot, w)\|_{L^2(T_K)}^2}\leq  \sup_{\substack{f\in A^2(T_K) \\ f\neq 0}}\frac{|f(w)|^2}{\|f\|_{L^2(T_K)}^2},
\end{equation*}
since $z\mapsto \mathcal{K}_{T_K}(z,w)\in A^2(T_K)$, proving the second equality in (\ref{bk_eq1}).
\end{proof}

\subsection{Affine invariance of \texorpdfstring{$\mathcal{B}(K)$}{TEXT}}
\label{sec4.4}

The discussion of \S\ref{BergSec} for a tube domain $T_K$ carries over
to any domain $\Omega\subset \C^n$, to yield a Bergman kernel $\mathcal{K}_\Omega(z,w)$, that is the reproducing kernel of the evaluation functional $\mathrm{ev}_{\Omega,w}: A^2(\Omega)\to \C$ at $w\in \Omega$.

The following lemma describes how the Bergman kernel behaves under affine transformations.

\begin{lemma}\label{bergman_affine_transform}
Let $\Omega\subset \C^n$ open domain in $\C^n$ and $T(z)= Az+ b$,
$A\in GL(n,\C)$, $b\in \C^n$. For $z,w\in \Omega$, $\mathcal{K}_{\Omega}(z,w)= \mathrm{det}_\R A \cdot \mathcal{K}_{T\Omega}(Tz, Tw)$.
\end{lemma}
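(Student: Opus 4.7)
The plan is to prove the transformation formula by verifying that the right-hand side satisfies the two defining properties of the Bergman kernel on $\Omega$: membership in $A^2(\Omega)$ and the reproducing property. Uniqueness of the Bergman kernel then forces the desired equality. The only arithmetic input one needs is that for $A \in GL(n,\C)$ viewed as a real-linear map $\R^{2n} \to \R^{2n}$, one has $\det_\R A = |\det_\C A|^2$, which in particular is real and strictly positive; translation by $b$ contributes nothing to the Jacobian.

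First I would define the candidate
\begin{equation*}
\tilde{\mathcal{K}}(z,w) \defeq \det_\R A \cdot \mathcal{K}_{T\Omega}(Tz, Tw), \qquad z,w \in \Omega.
\end{equation*}
For fixed $w$, the map $z \mapsto \tilde{\mathcal{K}}(z,w)$ is holomorphic on $\Omega$ since $T$ is holomorphic and $\mathcal{K}_{T\Omega}(\cdot, Tw) \in A^2(T\Omega)$. The $L^2(\Omega)$-norm is immediate from the change of variables $\zeta = Tz$ in the $L^2(T\Omega)$ integral of $\mathcal{K}_{T\Omega}(\cdot, Tw)$: since $d\lambda(\zeta) = \det_\R A \cdot d\lambda(z)$, one gets
\begin{equation*}
\int_\Omega |\mathcal{K}_{T\Omega}(Tz, Tw)|^2 \, d\lambda(z) = \frac{1}{\det_\R A} \int_{T\Omega} |\mathcal{K}_{T\Omega}(\zeta, Tw)|^2 \, d\lambda(\zeta) < \infty,
\end{equation*}
hence $\tilde{\mathcal{K}}(\cdot, w) \in A^2(\Omega)$.

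Next, for any $f \in A^2(\Omega)$ I would define $F(\zeta) \defeq f(T^{-1}\zeta)$ on $T\Omega$. The same change of variables yields $\|F\|_{L^2(T\Omega)}^2 = \det_\R A \cdot \|f\|_{L^2(\Omega)}^2$, so $F \in A^2(T\Omega)$. Applying the reproducing property of $\mathcal{K}_{T\Omega}$ at $Tw$ and then pulling back via $\zeta = Tz$ gives
\begin{equation*}
f(w) = F(Tw) = \int_{T\Omega} F(\zeta) \overline{\mathcal{K}_{T\Omega}(\zeta, Tw)} \, d\lambda(\zeta) = \int_\Omega f(z) \overline{\det_\R A \cdot \mathcal{K}_{T\Omega}(Tz, Tw)} \, d\lambda(z),
\end{equation*}
where the second equality uses that $\det_\R A$ is real and positive (so it may be absorbed inside the conjugate without sign issues). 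This is exactly the reproducing identity $f(w) = \langle f, \tilde{\mathcal{K}}(\cdot, w)\rangle_{L^2(\Omega)}$.

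Since $\tilde{\mathcal{K}}(\cdot, w)$ lies in $A^2(\Omega)$ and reproduces every $f \in A^2(\Omega)$ at $w$, uniqueness of the Bergman kernel of $\Omega$ forces $\tilde{\mathcal{K}} = \mathcal{K}_\Omega$, which is the claim. No step is a real obstacle here; the only thing to be careful about is bookkeeping the conjugation and the factor $\det_\R A$, both of which are handled cleanly by the fact that $\det_\R A > 0$ for $A \in GL(n,\C)$.
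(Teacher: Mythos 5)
Your proof is correct and takes essentially the same route as the paper: both arguments rest on uniqueness of the reproducing kernel, identifying $\det_\R A\cdot\mathcal{K}_{T\Omega}(Tz,Tw)$ with $\mathcal{K}_\Omega(z,w)$ by checking the reproducing property via the change of variables $\zeta=Tz$. Your write-up is slightly more complete than the paper's in that you explicitly verify $\tilde{\mathcal{K}}(\cdot,w)\in A^2(\Omega)$ before invoking uniqueness, whereas the paper simply ``compares the two equations''; this is a harmless but genuine gap-filling.
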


\begin{remark}
A $\C$-linear map $A: \C^n\to \C^n$ can also be viewed as an $\R$-linear map $\R^{2n}\ni (x,y)\mapsto (\mathrm{Re}(A(x+\sqrt{-1}y)), \mathrm{Im}(A(x+\sqrt{-1}y)))\in \R^{2n}$. Denote by $\det_\C A$ the determinant of the former and $\det_\R A$ the determinant of the latter.
Then,
$\det_\R A= |\det_\C A|^2$ \cite[Lemma 2]{CegPer}. 
\end{remark}

\begin{proof}[Proof of Lemma \ref{bergman_affine_transform}]
Let $f\in A^2(T\Omega)$. Then,
\begin{equation*}
    f(Tw)= \int_{T\Omega}f(\zeta) \mathcal{K}_{T\Omega}(\zeta, Tw)\dif\lambda(\zeta)= \int_\Omega f(Tz)\mathcal{K}_{T\Omega}(Tz, Tw) \mathrm{det}_\R A\dif\lambda(z),
\end{equation*}
because $|\det_\R A|= \det_\R A$, since $\det_\R A= |\det _\C A|^2\geq 0$.
On the other hand, since $f\in A^2(T\Omega)$ and $T$ is a holomorphic
map then $f\circ T\in A^2(\Omega)$. So,
\begin{equation*}
    f(Tw)= (f\circ T)(w)= \int_{\Omega} f(Tz) \mathcal{K}_{\Omega}(z,w)\dif\lambda(z).
\end{equation*}
The claim follows by comparing the two equations.
\end{proof}

A direct application of Lemma \ref{bergman_affine_transform} gives the affine invariance of $\mathcal{B}(K)$.
%$|K|^2 \mathcal{K}_{T_K}(\sqrt{-1}b(K), \sqrt{-1}b(K))$.

\begin{proof}[Proof of Lemma \ref{bk_ai}]
Let $K\subset \R^n$ be a convex body, and $S(y)= Ay+a$, $A\in GL(n,\R)$, $a\in \R^n$ be an affine transformation.
Consider the embedding of $K$ in $T_K$
as $\{0\}\times\sqrt{-1} K$, and the induced transformation,
still denoted by $S$, $S:\i y\mapsto\i Ay+\i a$.
There is a unique extension of $S$ to a $\C$-linear map on $\R^n+\sqrt{-1}\R^n=\C^n$, that we still denote by $S$,
${S}(z):= Ax +\sqrt{-1}(Ay+a)$. Note,
$\det_\C S= (\det A)^2$.
By (\ref{affine_barycenter}),
\begin{equation*}
S(\sqrt{-1}b(K))= \sqrt{-1}Ab(K)+ \sqrt{-1}a
= \sqrt{-1} b(AK)+ \sqrt{-1}a= \sqrt{-1}b(AK+a)= \sqrt{-1}b(S(K)),
\end{equation*}
and hence, by Lemma \ref{bergman_affine_transform},
\begin{equation*}
    \begin{aligned}
        |K|^2\mathcal{K}_{T_K}(\sqrt{-1}b(K), \sqrt{-1}b(K))&= |K|^2(\det A)^2 \mathcal{K}_{S(T_K)}(S(\sqrt{-1}b(K)), S(\sqrt{-1}b(K))) \\
        &= |S(K)|^2 \mathcal{K}_{T_{S(K)}}(\sqrt{-1}b(S(K)), \sqrt{-1}b(S(K))), 
    \end{aligned}
\end{equation*}
because $S(T_K)= A\R^n+ \sqrt{-1}AK+\sqrt{-1}a= \R^n+ \sqrt{-1}S(K)= T_{S(K)}$, and $|S(K)|= |AK+a| =|\det A||K|$.
\end{proof}

\begin{remark}
Motivated by Lemma \ref{bk_ai}, in a subsequent article \cite{MR2}
we introduce a family of affine invariants
involving $L^p$-versions of the support function
that generalize $\mathcal{B}$. 
To give a glimpse, setting
 $   h_{K,p}(x)\defeq \log[{|K|}^{-1} \int_{K} e^{p\langle x,y\rangle}\dif y]/p, 
$
we introduce the $p$-Mahler volume 
$\M_p(K)\defeq 
|K| \left( \int_{\R^n} e^{-h_K^p(y)}\dif y\right).
$
Then, $ \mathcal{B}(K)= (4\pi)^{-n}\M_1(K).$
This leads to an alternative proof of Lemma \ref{bk_ai},
that simultaneously generalizes to all $p>0$
(as well as to a generalization of Proposition \ref{nazprop1}).
In \cite{MR2} we also study extremizers and monotonicity 
properties of $\M_p$.
 \end{remark}

\subsection{Repositioning \texorpdfstring{$K$}{TEXT}}
\label{repositionSec}

For symmetric bodies, by John's theorem there exist $A\in\mathrm{GL}(n,\R)$ and $r>0$ such that $B_2^n(0,r)\subset AK\subset B_2^n(0,r\sqrt{n})$ and hence $B_2^n(0,1/(r\sqrt{n}))\subset (AK)^\circ\subset B_2^n(0, 1/r)$. However, without the assumption of symmetry one cannot be certain that the maximal ellipsoid contained in $K$ contains $b(K)$. As a result, John's theorem alone does not guarantee a good upper bound for elements of $(K-b(K))^\circ$.
Nonetheless, combining John's theorem \cite[Theorem III]{john} with Santal\'o's inequality \cite[(3.12)]{santalo} yields the following explicit inclusions. 

\begin{lemma}\label{naz_john_lemma}
For a convex body $K\subset\R^n$ with $b(K)=0$, there exists $A\in GL(n, \R)$, $r>0$ and $a\in AK\subset\R^n$ 
such that, 
\begin{align}
\label{johneq1}
    B_2^n(a,r)\subset AK &\subset B_2^n(0,2nr), 
\\    \label{johneq2}
  (AK)^\circ&\subset B_2^n(0,{2n}/{r}).
\end{align}
\end{lemma}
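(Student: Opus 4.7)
The plan is to normalize $K$ via John's theorem, locate a Euclidean ball around the origin inside the normalized body by combining John's data with a classical centroid-type inclusion, and then pass to polars.

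First I would invoke John's theorem to obtain $A \in GL(n,\R)$, $a \in \R^n$, and $r > 0$ with $B_2^n(a,r) \subset AK \subset B_2^n(a,nr)$. Barycenters are equivariant under linear maps, so $b(AK) = A\,b(K) = 0$; in particular $0 \in AK \subset B_2^n(a,nr)$ forces $|a|\le nr$. Then
\[
AK \subset B_2^n(a,nr) \subset B_2^n(0,|a|+nr) \subset B_2^n(0,2nr),
\]
which is \eqref{johneq1}.

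For \eqref{johneq2} I would use the classical Minkowski centroid inclusion: any convex body $L$ with $b(L)=0$ satisfies $-L/n \subset L$ (a consequence of Brunn--Minkowski/Santal\'o-type reasoning, which is where Santal\'o enters the picture). Applied to $AK$ and the John ball $B_2^n(a,r) \subset AK$, this gives
\[
B_2^n(-a/n,\, r/n) \;=\; -\tfrac{1}{n}B_2^n(a,r) \;\subset\; AK.
\]
Convexity of $AK$ then yields, for any $\lambda \in [0,1]$,
\[
\lambda B_2^n(a,r) + (1-\lambda)B_2^n(-a/n,\, r/n) \;=\; B_2^n\!\Bigl(\lambda a - (1-\lambda)\tfrac{a}{n},\;\lambda r + (1-\lambda)\tfrac{r}{n}\Bigr) \;\subset\; AK.
\]
The choice $\lambda = 1/(n+1)$ cancels the center and produces radius $2r/(n+1)$, so $B_2^n(0, 2r/(n+1)) \subset AK$. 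Taking polars gives $(AK)^\circ \subset B_2^n(0, (n+1)/(2r)) \subset B_2^n(0, 2n/r)$, since $(n+1)/(2r) \le 2n/r$ for all $n \ge 1$.

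The main obstacle is \eqref{johneq2}: John's theorem alone does not place any ball around $0$ inside $AK$, since the John ellipsoid can be far from the origin and in principle may even avoid it. The trick above is to exploit $b(AK)=0$ via the centroid inclusion to produce a reflected copy of the John ball inside $AK$, then convexly average it with the original John ball to recenter at $0$. Alternatively, one can invoke Santal\'o's inequality more directly, as the paper's introduction suggests.
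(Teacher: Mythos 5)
Your proof of \eqref{johneq1} coincides with the paper's. For \eqref{johneq2}, however, you take a genuinely different route. The paper applies John's theorem a second time, to the polar $(AK)^\circ$, obtaining $B_2^n(b,\tilde r)\subset (AK)^\circ\subset B_2^n(0,2n\tilde r)$, and then controls $r\tilde r\leq 1$ by invoking Santal\'o's inequality, noting that $b(AK)=0$ forces the Santal\'o point of $(AK)^\circ$ to be the origin. You instead invoke the Minkowski--Radon centroid inclusion $-L\subset nL$ (valid for any convex body $L$ with $b(L)=0$) to place the reflected John ball $B_2^n(-a/n,r/n)$ inside $AK$, and then take the convex combination with weight $\lambda=1/(n+1)$ to recenter at the origin, yielding $B_2^n(0,2r/(n+1))\subset AK$; passing to polars gives $(AK)^\circ\subset B_2^n(0,(n+1)/(2r))\subset B_2^n(0,2n/r)$. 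This is correct, and arguably buys two things: it avoids Santal\'o's inequality entirely (relying only on John and the elementary Brunn--Minkowski-type centroid inclusion), and it produces the sharper radius $(n+1)/(2r)$ in place of $2n/r$. Note also that your route is a third option, distinct from the paper's Remark \ref{avoidJohnRk}, which drops John's theorem altogether at the cost of non-uniform constants and compensates by tensorization.

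One small inaccuracy in your framing: the centroid inclusion $-L/n\subset L$ is \emph{not} ``where Santal\'o enters the picture.'' It is a classical fact unrelated to Santal\'o's inequality (it follows from the Brunn--Minkowski concavity of sections, or from the Minkowski--Radon bound on the centroid's distance to the boundary). Your argument in fact does not use Santal\'o at all; describing it as ``Santal\'o-type reasoning'' conflates two different Brunn--Minkowski consequences.
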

\begin{proof}
For the proof of (\ref{johneq1}) we merely need $0\in\mathrm{int}\,K$. By John's theorem \cite[Theorem 10.12.2]{schneider}, there exist $A\in GL(n,\R)$, $a\in \R^n$ and $r>0$ such that $B_2^n(a,r)\subset AK\subset B_2^n(a,nr)$. Since $AK$ contains the origin, $0\in AK\subset B_2^n(a,nr)$, $|0-a|= |a|<nr$. Thus, for any $x\in AK$, $|x|\leq |x-a|+ |a|\leq 2nr$, proving  (\ref{johneq1}). 

Since also $0\in (AK)^\circ$, by the same reasoning, there exists $\Tilde{r}>0$ and $b\in \R^n$ such that 
\begin{equation}\label{johneq3}
    B_2^n(b,\Tilde{r})\subset (AK)^\circ\subset B_2^n(b,n\Tilde{r})\subset B_2^n(0, 2n\Tilde{r}).
\end{equation}
To prove (\ref{johneq2}), note that $b(AK)=0$ by (\ref{affine_barycenter}). Thus, Remark \ref{remark_santalo} applies and using (\ref{johneq1}) and (\ref{johneq3}),
\begin{equation}\label{naz_eq300}
    |B_2^n(a,r)||B_2^n(b,\Tilde{r})|\leq \M(AK)= \inf_{z\in\R^n}\M\left((AK)^\circ-z\right) \leq |B_2^n(0,1)|^2.
\end{equation}
Note $|B_2^n(a,r)||B_2^n(b,\Tilde{r})|= r^n\Tilde{r}^n|B_2^n(0,1)|^2$. Thus, $r\tilde{r}\leq 1$ so (\ref{johneq2}) follows from (\ref{johneq3}).
\end{proof}

The next remark accompanies the proof of Lemma \ref{naz_john_lemma}. 
\begin{remark}\label{remark_santalo}
To justify (\ref{naz_eq300}), note that for a convex body $K$ with $b(K)=0$ the Santal\'o point of its polar is at the origin, $s(K^\circ)=0$, since $b((K^\circ-0)^\circ)= b(K)=0$ \cite[p. 157]{santalo}. As a result, by Santal\'o's inequality \cite[(3.12)]{santalo}
\begin{equation*}
    \M(K)= \M(K^\circ)= \inf_{z\in \R^n}\M(K^\circ-z)\leq |B_2^n(0,1)|^2.
\end{equation*}
\end{remark}

\begin{remark}
\label{avoidJohnRk}
One may avoid the use of John's theorem at the cost of slightly less precise estimates down the road. Yet, using tensorization one can still obtain from these Proposition \ref{nazprop2}. To see this, for a convex body $K\subset \R^n$ with $b(K)=0$ let $r,R>0$ such that 
\begin{equation*}
    B^n_2(0,r)\subset K\subset B^n_2(0,R). 
\end{equation*}
(Of course, $r$ and $R$ depend on $K$. Moreover, unlike in our previous work that did invoke 
John's theorem, $R/r$ depends on $K$ and may not be uniformly controlled.)
Now, $B_2^n(0, 1/R)\subset K^\circ\subset B^n_2(0,1/r)$, thus for any $x\in K$ and $t\in K^\circ$,
$
    |\langle x,t\rangle|\leq |x||t|\leq {R}/{r}, 
$
and hence
$
    \frac{r}{R\sqrt{2}}K\times K\subset K_\C\subset K\times K.
$
As a result, since $B_2^n(0,r)\subset K$, 
\begin{equation*}
   B^{2n}_2\left( 0, \frac{r^2}{R\sqrt{2}}\right)= \frac{r}{R\sqrt{2}} B^{2n}_2(0,r)\subset \frac{r}{R\sqrt{2}} B^n_2(0,r)\times B^n_2(0,r)\subset \frac{r}{R\sqrt{2}}K\times K\subset K_\C. 
\end{equation*}
As in Lemma \ref{distance_lemma} this shows that $\mathrm{dist}(\C^n\setminus (\sigma\delta K_\C), \delta K_\C)\geq \frac{(\sigma-1)\delta r^2}{R\sqrt{2}}$, which allows for a bump function $g:\C^n\to \R$ supported on $\sigma\delta K_\C$, equal to 1 in $\delta K_\C$ with 
$
    |\dif g\,|\leq \frac{2R\sqrt{2}}{(\sigma-1)\delta r^2}.
$
Therefore, 
$
    \int_{T_K}|\overline{\partial}g|^2 e^{-\phi}\leq \frac{2R^2}{(\sigma-1)^2\delta^2 r^4} e^{-2n\log\delta+ 2nC\delta} (\sigma\delta)^{2n} |K|^2,
$
and
$
    \int_{T_K}|h|^2 \leq \left( \frac{R}{r}\right)^4 \frac{8 e^{1+2nC\delta}}{(\sigma-1)^2 \delta^2} (4\sigma^2)^n |K|^2.
$
As a result, (\ref{nazeq100}) becomes 
$
    |K|^2\mathcal{K}_{T_K}(0,0)\geq \left( \frac{1}{4\sigma^2}\right)^n \left( \frac{r}{R}\right)^4 \frac{(\sigma-1)^2\delta^2}{8 e^{1+2nC\delta}}. 
$
Note that for $m\in\mathbb{N}$, 
\begin{equation*}
    B^{nm}_2(0,r)\subset B^n_2(0,r)\times\ldots\times B^n_2(0,r)\subset K^m\subset B^n_2(0,R)\times \ldots\times B^n_2(0,R)\subset B_2^{nm}(0, mR).
\end{equation*}
Therefore, 
\begin{equation}\label{nazeq99}
    \left( |K|^2 \mathcal{K}_{T_K}(0,0)\right)^m= |K^m|^2 \mathcal{K}_{T_{K^m}}(0,0)\geq \left( \frac{1}{4\sigma^2}\right)^{nm} \left(\frac{r}{mR} \right)^4 \frac{(\sigma-1)^2 \delta^2}{8 e^{1+2nm C\delta}}. 
\end{equation}
One recovers (\ref{nazeq100}) by first taking $m\to \infty$, then $\sigma\to 1$ and $\delta\to 0$.
\end{remark}

\subsection{Plurisubharmonic support function}
\label{pshsupp_sub}

In the symmetric setting, Nazarov invokes a neat trick
of using a kind of complexified (plurisubharmonic) support function
to construct the weight function. On the tube domain the inner product
$\langle z,t\rangle$ (with $z\in T_K$ and $t\in K^\circ$) 
is, of course, a complex number, lying in the strip
$\R+\i[-1,1]\subset\C$; 
trying to consider a naive candidate for a complexified support
function of the form $\log\sup_{t\in K^\circ}|\langle z,t\rangle|$
would not quite work as it is unbounded. To overcome this Nazarov
composes with the 
conformal map $\Phi_s(\zeta)= \frac{4}{\pi}\frac{e^{\frac{\pi}{2}\zeta}-1}{e^{\frac\pi2 \zeta}+1}$ 
sending the strip $\{|\mathrm{Im}\zeta|\leq 1\}$ to the disk of radius $\frac{4}{\pi}$ \cite[p. 339]{nazarov}, that leads to the
nicely bounded psh function
\begin{equation}
\label{usefulcxsupp}
\log\sup_{t\in K^\circ}|\Phi(\langle z,t\rangle)|
\end{equation}
(that Nazarov modifies by a quadratic term to obtain 
a desirable weight function).
There is a small caveat (not discussed in \cite{nazarov}) that we deal with in Lemma \ref{properties_list} (i): to show that this supremum in
\eqref{usefulcxsupp} is indeed plurisubharmonic, it is necessary to check it is upper semi-continuous. 

In the non-symmetric case, the expression
$\langle z,t\rangle$ (with $z\in T_K$ and $t\in K^\circ$) 
now lies in the half-space
$\R+\i[-\infty,1]\subset\C$; composing with 
the conformal map from the half-plane $H\defeq \{\zeta\in \C: \mathrm{Im}\zeta\leq 1\}$ to the closed disk of radius 2 (as suggested by Nazarov \cite[p.342]{nazarov}),
\begin{gather}
\label{PhiEq}
    \Phi: \{\mathrm{Im}\zeta\leq 1\}\to \overline{B_2^2(0, 2)}, \qquad
    \zeta\mapsto \frac{-2\sqrt{-1} \zeta}{\zeta-2\sqrt{-1}},
\end{gather}
one is again led to the useful complexified support function
\eqref{usefulcxsupp}.
A simple auxiliary estimate is needed for later calculations:

\begin{claim}\label{bound_on_Phi}
There exists $C>0$ such that $|\log|\Phi(\zeta)|-\log|\zeta||\leq C|\zeta|$, for all $|\zeta|\leq \frac12$.
\end{claim}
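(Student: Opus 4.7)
The plan is to observe that $\log|\Phi(\zeta)| - \log|\zeta|$ simplifies algebraically to a single smooth function on a neighborhood of $0$, vanishing at $\zeta=0$, and then apply the mean value theorem.

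First I would compute $|\Phi(\zeta)|$ directly from \eqref{PhiEq}: since $|{-2\sqrt{-1}\zeta}| = 2|\zeta|$, we get
\[
|\Phi(\zeta)| = \frac{2|\zeta|}{|\zeta - 2\sqrt{-1}|},
\]
and hence
\[
\log|\Phi(\zeta)| - \log|\zeta| = \log 2 - \log|\zeta - 2\sqrt{-1}| =: F(\zeta).
\]
Note that $F(0) = \log 2 - \log 2 = 0$, so the claim reduces to bounding $|F(\zeta)| \le C|\zeta|$ on $\{|\zeta|\le 1/2\}$.

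The function $F$ is smooth on the disk $|\zeta| < 2$ since $\zeta - 2\sqrt{-1}$ does not vanish there. Writing $\zeta = x+\sqrt{-1}\,y$, a direct calculation gives $|\nabla F(\zeta)| = 1/|\zeta - 2\sqrt{-1}|$. For $|\zeta|\le 1/2$, the reverse triangle inequality gives $|\zeta - 2\sqrt{-1}| \ge 2 - 1/2 = 3/2$, so $|\nabla F| \le 2/3$ on this disk. Since $F(0)=0$ and the disk is convex, the mean value theorem yields $|F(\zeta)| \le (2/3)|\zeta|$, and the claim follows with $C = 2/3$.

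The argument is entirely elementary; there is no real obstacle, only the small bookkeeping step of verifying that $\zeta \mapsto \log|\zeta - 2\sqrt{-1}|$ is smooth and has bounded gradient on the disk of radius $1/2$, which is immediate from its distance to the singularity at $\zeta = 2\sqrt{-1}$.
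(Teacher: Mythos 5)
Your proof is correct, and it takes a genuinely different (and more elementary) route than the paper. The paper also isolates the factor $\Psi(\zeta)=\Phi(\zeta)/\zeta=\frac{-2\sqrt{-1}}{\zeta-2\sqrt{-1}}$, but then stays in the complex-analytic setting: it takes the holomorphic logarithm of $\Psi$ on $|\zeta|\le\frac12$, factors $\log\Psi(\zeta)=\zeta g(\zeta)$ with $g$ holomorphic (using $\log\Psi(0)=0$), and sets $C=\sup_{|\zeta|\le 1/2}|g(\zeta)|$, bounding $|\log|\Psi||\le|\log\Psi|=|\zeta||g(\zeta)|$. You instead compute the modulus explicitly, reduce the claim to bounding $F(\zeta)=\log 2-\log|\zeta-2\sqrt{-1}|$, and apply the mean value theorem with the gradient bound $|\nabla F|=1/|\zeta-2\sqrt{-1}|\le 2/3$ on the disk; all the intermediate computations check out. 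What your approach buys is an explicit constant $C=2/3$ (consistent with the paper's observation that necessarily $C\ge 1/2$, which is the sharp first-order coefficient), obtained by purely real-variable means. What the paper's approach buys is that it does not depend on having a clean closed form for $|\Phi|$ --- the identical argument applies verbatim to the conformal map $\Phi_s$ of the symmetric case and to any holomorphic $\Phi$ with a simple zero at the origin and $\Phi'(0)$ of modulus $1$, which is the structural point the authors want to make.
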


\begin{remark}
In Nazarov's symmetric setting he uses (without proof) the estimate 
$|\log|\Phi_s(\zeta)|- \log|\zeta||\leq C|\zeta|$ when $|\zeta|\leq 1/2$  \cite[p. 339]{nazarov}.
\end{remark}

\begin{proof}
For
\begin{equation*}
    \Psi(\zeta)\defeq \frac{-2\sqrt{-1}}{\zeta-2\sqrt{-1}}= \begin{cases} \frac{\Phi(\zeta)}{\zeta},  &\zeta\neq 0, \\
    1,  &\zeta =0,
    \end{cases}
\end{equation*}
compute,
$
    \Psi'(\zeta)= \frac{2\sqrt{-1}}{(\zeta-2\sqrt{-1})^2}.
$
In particular, $\Psi(0)= 1$ and $\Psi'(0)= -\frac{\sqrt{-1}}{2}$. Since $\Psi(\zeta)\neq 0$ for $|\zeta|\leq \frac12$, $\log\Psi(\zeta)$ is well-defined and holomorphic. Note, 
\begin{equation*}
\log\Psi(0)=0, \quad 
    (\log\Psi)'(0)= \frac{\Psi'(0)}{\Psi(0)}= -\frac{\sqrt{-1}}{2},
\end{equation*}
so there exists $g(\zeta)$ holomorphic in $B_2^2(0,\frac12)$ with $g(0)=-\frac{\sqrt{-1}}{2}$ so that $\log\Psi(\zeta)= \zeta g(\zeta)$. Take $C= \sup_{|\zeta|\leq \frac12}|g(\zeta)|$. By the maximum principle, $C\geq |g(0)|= 1/2$. Since $\log|\Psi|= \mathrm{Re}\log\Psi$,
\begin{equation*}
\left| \log|\Psi(\zeta)| \right|\leq |\log\Psi(\zeta)|= |\zeta||g(\zeta)|\leq C|\zeta|,
\end{equation*}
for all $|\zeta|\leq \frac12$.
\end{proof}

\begin{remark}
There is flexibility in the choice of the conformal map. For instance, replacing \eqref{PhiEq}
by a map $\Phi$ from $\{\mathrm{Im}\zeta\leq 1\}$ to the unit ball $\overline{B_2^2(0,1)}$
would give $\Phi'(0)= 1/2$ and the estimate of Claim \ref{bound_on_Phi} would change to
 $   |\log|\Phi(\zeta)|- \log|\zeta/2||\leq C|\zeta|
 $.
As the reader may readily verify,
the only substantial subsequent changes would be in Lemma \ref{properties_list} (ii) and (iv) which would read $\phi(z)\leq 1, z\in T_K$, and 
\begin{equation*}
    \phi(z)\geq 2n\log(\delta/2)- 16 C\delta n^3\sqrt{2}, \quad z\in (\sigma\delta K_\C- (\sigma-1)\delta \tilde{a})\setminus \delta K_\C,
\end{equation*}
respectively. In any case, the estimate on the $L^2$-norm of $h$ (\ref{h_estimate2}) would remain the same since
\begin{equation}\label{naz_eq502}
        \int_{T_K}|h|^2\dif\lambda\leq e^{\sup_{z\in T_K} \phi(z)} \int_{T_K} |h|^2 e^{-\phi}\dif\lambda, 
\end{equation}
is bounded above by the product of $e^{\sup_{T_K}\phi(z)}$ and the weighted $L^2$-norm of $h$. Changing $\Phi$ would result in a trade-off between estimates in those two terms. First of all, in this case
\begin{equation*}
    e^{\sup_{z\in T_K}\phi(z)}\leq e, 
\end{equation*}
instead of $e^{1+2n\log 2}= 4^n e$. On the other hand, (\ref{g_estimate_eq}) would become
\begin{equation*}
    \begin{aligned}
        \int_{T_K}|\overline{\partial}g|^2e^{-\phi}\dif\lambda 
        &\leq \frac{32n^4}{(\sigma-1)^2\delta^2 r^2} (\delta/2)^{-2n}\sigma^{-2n} e^{2n\log\sigma+ 16n^3C\delta\sqrt{2}} (\sigma\delta)^{2n}|K|^2= (4\sigma^2)^n e^{o(n)} |K|^2,
    \end{aligned}
\end{equation*}
i.e., $(\delta)^{-2n}$ is replaced by $(\delta/2)^{-2n}$, thus (\ref{h_estimate}) becomes
\begin{equation*}
    \int_{T_K} |h|^2 e^{-\phi}\dif\lambda\leq  8n^2 r^2 \int_{T_K} |\overline{\partial}g|^2 e^{-\phi}\dif \lambda\leq (4\sigma^2)^n e^{o(n)}|K|^2, 
\end{equation*}
i.e., $\sigma^{2n}$ is replaced by $4^n\sigma^{2n}$, resulting in the same estimate in (\ref{naz_eq502}).
\end{remark}

\subsection{Weight function}
\label{weight_function_sub}
At this point an important reduction is needed.
In order to prove Proposition \ref{nazprop2},
  by replacing $K$ with $A(K-b(K))$
 and 
invoking
 Lemmas \ref{bk_ai} and \ref{naz_john_lemma}, 
it is enough to restrict to a sub-class of ``John convex bodies":
\begin{equation*}
\begin{aligned}
    \mathcal{J}\defeq \{K\subset \R^n &\text{ convex body with } b(K)=0 \text{ such that the} \\&\text{ conclusion of Lemma \ref{naz_john_lemma} holds with } A=I_n\}.
\end{aligned}
\end{equation*}
Thus, for the remainder of the Section, fix $K\in \mathcal{J}$.

For $K\in\mathcal{J}$, let $r>0$ and $a\in \R^n$ be such that (\ref{johneq1}) and (\ref{johneq2}) hold.
Building on \eqref{usefulcxsupp}, consider the plurisubharmonic weight function on $T_K$,
\begin{equation}\label{weight_phi}
    \phi(z)\defeq \frac{|\mathrm{Im}\,z|^2}{4n^2r^2}+ 2n\log\sup_{t\in K^\circ}|\Phi(\langle z,t\rangle)|.
\end{equation}
(The advantage of using $|\mathrm{Im}\,z|^2$, and not $|z|^2$, is that the resulting
$\phi$ is bounded.)
The main properties of $\phi$ are the content of
the next lemma
that requires two more
pieces of notation.
Set 
\begin{equation}\label{KC}
    K_\C\defeq \{z\in \C^n: |\langle z, t\rangle|\defeq\sqrt{\langle x,t\rangle^2+ \langle y,t\rangle^2}\leq 1 \text{ for all } t\in K^\circ\},
\end{equation}
 and
 \begin{equation}
 \label{TildeaEq}
 \Tilde{a}\defeq a+\sqrt{-1}a\in T_K. 
 \end{equation}

\begin{lemma}\label{properties_list}
Let $K\in\mathcal{J}$. Let $\phi$ 
and $K_\C$ be given by (\ref{weight_phi}) and (\ref{KC}).

\noindent 
    (i) $\phi$ is plurisubharmonic and satisfies the conditions of Proposition
    \ref{hormander} with $\tau= 1/(8n^2r^2)$. 

\noindent     
    (ii) For all $z\in T_K$, $\phi(z)\leq 2n\log 2+1$.

\noindent 
    (iii) 
    $e^{-\phi}$ is not locally integrable at $0$: 
    \begin{equation*}
        e^{-\phi(z)}\geq e^{-1-nC}\frac{r^{2n}}{(2n)^{2n}|z|^{2n}},
        \quad\hbox{for $z\in T_K$ with $|z|\leq(4n)^{-1}r$}. 
    \end{equation*}

\noindent 
    (iv) For $\sigma\in (1,2)$, $\delta\in (0,\frac{1}{8(1+2\sqrt{2}n^2)})$, 
    $a\in\R^n$ as in Lemma \ref{naz_john_lemma}, and $\Tilde{a}\in\C^n$ as in \eqref{TildeaEq}, 
    \begin{equation*}
        \phi(z)\geq 2n\log\delta- 16\sqrt{2}C\delta n^3, \quad z\in (\sigma\delta K_\C-(\sigma-1)\delta \Tilde{a})\setminus \delta K_\C,
    \end{equation*}
and the set $(\sigma\delta K_\C-(\sigma-1)\delta \Tilde{a})\setminus \delta K_\C$
is non-empty.

\noindent     
    (v) $\frac{1}{4\sqrt{2}n^2}(K\times K)\subset K_\C\subset K\times K.$
\end{lemma}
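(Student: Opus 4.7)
The plan is to treat the five parts in order, with (iv) as the main technical step.

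For (i) and (ii), decompose $\phi = \phi_1 + \phi_2$ with $\phi_1(z)\defeq |\mathrm{Im}\,z|^2/(4n^2r^2)$ and $\phi_2(z)\defeq 2n\log\sup_{t\in K^\circ}|\Phi(\langle z,t\rangle)|$. A direct $\partial\bar\partial$ computation in the coordinates $z_j=x_j+\sqrt{-1}y_j$ shows that $\phi_1$ contributes exactly $\tfrac{1}{8n^2r^2}I$ to the Levi form, identifying $\tau=1/(8n^2r^2)$. For $\phi_2$, each $z\mapsto \Phi(\langle z,t\rangle)$ is holomorphic, so $z\mapsto \log|\Phi(\langle z,t\rangle)|$ is pluriharmonic off the zero set; joint continuity of $(z,t)\mapsto |\Phi(\langle z,t\rangle)|$ on $T_K\times K^\circ$ together with compactness of $K^\circ$ then gives upper semicontinuity (in fact continuity) of the supremum, so $\phi_2$ is plurisubharmonic. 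Part (ii) follows since $\Phi$ maps $\{\mathrm{Im}\,\zeta\le 1\}$ into $\overline{B_2^2(0,2)}$ (giving $\phi_2\le 2n\log 2$) and Lemma \ref{naz_john_lemma} yields $K\subset B_2^n(0,2nr)$ (so $\phi_1\le 1$).

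For (iii), keep $\phi_1\le 1$ as in (ii) and note that when $|z|\le r/(4n)$, Cauchy--Schwarz together with $K^\circ\subset B_2^n(0,2n/r)$ gives $|\langle z,t\rangle|\le 1/2$ uniformly in $t\in K^\circ$, bringing Claim \ref{bound_on_Phi} into play pointwise in $t$: $\log|\Phi(\langle z,t\rangle)|\le \log|\langle z,t\rangle|+C|\langle z,t\rangle|\le \log(2n|z|/r)+C/2$. Taking the supremum over $t$, multiplying by $2n$, adding the bound on $\phi_1$, and exponentiating produces exactly the lower bound on $e^{-\phi}$ asserted in (iii).

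Part (iv) is the main obstacle and splits into the $\phi$-lower bound and the non-emptiness of the set-difference. For $z\in \sigma\delta K_\C-(\sigma-1)\delta\tilde{a}$ the defining condition is $|\langle z+(\sigma-1)\delta\tilde a,t\rangle|\le\sigma\delta$ for all $t\in K^\circ$; since $\tilde a=a+\sqrt{-1}a$ gives $|\langle\tilde a,t\rangle|=\sqrt{2}|\langle a,t\rangle|\le \sqrt{2}|a||t|\le 4\sqrt{2}n^2$ by Lemma \ref{naz_john_lemma}, the triangle inequality yields the two-sided bound $|\langle z,t\rangle|\le \sigma\delta+4\sqrt{2}(\sigma-1)n^2\delta$ for every $t\in K^\circ$. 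Combined with $\sigma<2$ and $\delta<1/(8(1+2\sqrt{2}n^2))$ this forces $|\langle z,t\rangle|<1/2$ uniformly in $t$, so Claim \ref{bound_on_Phi} applies for every $t$. Picking a witness $t_0\in K^\circ$ with $|\langle z,t_0\rangle|>\delta$ (which exists precisely because $z\notin \delta K_\C$) gives $\log|\Phi(\langle z,t_0\rangle)|\ge \log\delta - C|\langle z,t_0\rangle|\ge \log\delta -8\sqrt{2}Cn^2\delta$ after using the elementary inequality $1+2\sqrt{2}n^2\le 4\sqrt{2}n^2$ (valid for $n\ge 1$); multiplying by $2n$ and dropping the non-negative $\phi_1$ term produces the claimed bound. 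For non-emptiness, observe that $\sigma\delta K_\C-(\sigma-1)\delta\tilde a$ is a translate of $\sigma\delta K_\C$ and hence has $2n$-dimensional Lebesgue volume $\sigma^{2n}|\delta K_\C|>|\delta K_\C|$, so it cannot be contained in $\delta K_\C$. The hard part here is just bookkeeping: keeping track of the three separate constants (from $\sigma$, from the John radii, and from $\Phi$) through the two uses of Claim \ref{bound_on_Phi} so that everything collapses to the stated form $2n\log\delta-16\sqrt{2}Cn^3\delta$.

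For (v), the right-hand inclusion follows from the bipolar theorem: $z=x+\sqrt{-1}y\in K_\C$ forces $\langle x,t\rangle^2+\langle y,t\rangle^2\le 1$ for every $t\in K^\circ$, hence in particular $\langle x,t\rangle\le 1$ and $\langle y,t\rangle\le 1$, so $x,y\in K^{\circ\circ}=K$ (using $0\in \mathrm{int}\,K$). The left inclusion is another Cauchy--Schwarz estimate powered by Lemma \ref{naz_john_lemma}: for $(x,y)\in K\times K$ and $t\in K^\circ$ one has $|\langle x,t\rangle|,|\langle y,t\rangle|\le |x||t|\le 4n^2$, so $|\langle x+\sqrt{-1}y,t\rangle|=\sqrt{\langle x,t\rangle^2+\langle y,t\rangle^2}\le 4\sqrt{2}n^2$, and hence $(x+\sqrt{-1}y)/(4\sqrt{2}n^2)\in K_\C$.
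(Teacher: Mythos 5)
Your proposal is correct and follows the paper's argument closely throughout; parts (i), (ii), (iii), (v) and the $\phi$-lower-bound half of (iv) match the paper's proof essentially step by step (same decomposition $\phi=\phi_1+\phi_2$, same use of Claim \ref{bound_on_Phi} with the witness $t_0$, same Cauchy--Schwarz estimates via the John radii). The one genuine departure is the non-emptiness clause of (iv): the paper invokes Claim \ref{nonemptyClaim}, which in turn appeals to the distance lower bound of Lemma \ref{distance_lemma} (that $\mathrm{dist}(\C^n\setminus(\sigma\delta K_\C-(\sigma-1)\delta\tilde a),\delta K_\C)>0$, hence the larger set strictly contains $\delta K_\C$ and the difference is non-empty). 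You instead give a one-line volume argument: $\sigma\delta K_\C-(\sigma-1)\delta\tilde a$ is a translate of $\sigma\delta K_\C$, so its $2n$-dimensional volume is $\sigma^{2n}|\delta K_\C|>|\delta K_\C|$, and a set of strictly larger volume cannot be contained in $\delta K_\C$. This is shorter and more elementary, and it decouples the non-emptiness claim from Lemma \ref{distance_lemma} (which the paper needs anyway for the bump function, so nothing is saved in the paper's organization; but as a self-contained proof of (iv) your route is cleaner). Everything else, including the constant-tracking $2C\delta+4\sqrt{2}C\delta n^2\le 8\sqrt{2}C\delta n^2$ via $1+2\sqrt{2}n^2\le 4\sqrt{2}n^2$, checks out.
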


\begin{remark}
The lemma is a little different from Nazarov's bounds for symmetric bodies \cite[\S5--\S 6]{nazarov}. The most substantial differences are in Lemma \ref{properties_list} (iv), for which $\delta$ needs to move in a smaller range for Claim \ref{bound_on_Phi} to hold, and the bound is now for $z\in(\sigma\delta K_\C-(\sigma-1)\delta \Tilde{a})\setminus \delta K_\C$ instead of $z\in\sigma\delta K_\C\setminus\delta K_\C$. Moreover, for Lemma \ref{properties_list} (v) a smaller dilation, dependent on dimension, is necessary for the first inclusion to hold. This is due to Lemma \ref{naz_john_lemma}, which gives
  $  |\langle x,t\rangle|\leq 4n^2,$ 
for all $x\in K, t\in K^\circ$ (in place of $|\langle x,t\rangle|\leq 1$ for symmetric bodies).
\end{remark}

\begin{proof}
\noindent 
    (i)
    The plurisubharmonicity of $\log\sup_{t\in K^\circ}|\Phi(\langle z,t\rangle)|$ is proved as follows. First, note that $\Phi(\langle z,t\rangle)$ is holomorphic in $z$ for all $t\in K^\circ$, thus $\log|\Phi(\langle z,t\rangle)|$ is plurisubharmonic on $z$ \cite[Example 4.1.10]{hormander}, and so $\sup_{t\in K^\circ}\log|\Phi(\langle z,t\rangle)|=\log\sup_{t\in K^\circ}|\Phi(\langle z,t\rangle)|$ is plurisubharmonic if it is upper semi-continuous \cite[Theorem 5]{lelong} (here we used $\log$ is increasing). Since $\log$ is increasing, it suffices to show that $\sup_{t\in K^\circ}|\Phi(\langle z,t\rangle)|$ is upper semi-continuous. 
    
    Fix $z_0\in T_K$ and let $\rho>0$ such that $\overline{B_2^{2n}(z_0, \rho)}\subset T_K$. Note that for $z\in \overline{B_2^{2n}(z_0, \rho)}$ and $t\in K^\circ$, by (\ref{johneq2}), $|\langle z,t\rangle|\leq (\rho+|z_0|)2n r^{-1}$. On $\{\mathrm{Im}\zeta\leq 1\}\cap \overline{B_2^2(0, (\rho+|z_0|) 2n r^{-1})}$, since it is compact and $\Phi$ is continuous, $\Phi$ is uniformly continuous. Let $\e>0$. There exists $\delta>0$ such that for $\zeta_1, \zeta_2\in \{\mathrm{Im}\zeta\leq 1\}\cap\overline{B_2^2(0,(\rho+|z_0|) 2n r^{-1})}$ with $|\zeta_1-\zeta_2|<\delta$, $|\Phi(\zeta_1)-\Phi(\zeta_2)|<\e$. For $z\in \overline{B_2^{2n}(z_0, \rho)}$ with $|z-z_0|<\frac{r\delta}{2n}$, and $t\in K^\circ$, by (\ref{johneq2}),
    \begin{equation*}
        |\langle z,t\rangle- \langle z_0, t\rangle|= |\langle z-z_0, t\rangle|\leq |z-z_0||t|<\frac{r\delta}{2n}\frac{2n}{r}= \delta, 
    \end{equation*}
    thus $|\Phi(\langle z,t\rangle)- \Phi(\langle z_0, t\rangle)|<\e$, 
    i.e., $|\Phi(z,t)|\leq \e+ |\Phi(\langle z_0, t\rangle)|$. Taking supremum over $t\in K^\circ$, 
    \begin{equation*}
        \sup_{t\in K^\circ}|\Phi(\langle z,t\rangle)|\leq \e+ \sup_{t\in K^\circ}|\Phi(z_0, t\rangle)|,
    \end{equation*}
    from which the upper semi-continuity of $\sup_{t\in K^\circ}|\Phi(\langle z,t\rangle)|$ follows.     
    
    Given that $\log \sup_{t\in K^\circ}|\Phi(\langle z,t\rangle)|$ is plurisubharmonic, compute 
    \begin{equation*}
        \begin{aligned}
            \sqrt{-1}\partial\overline{\partial}\phi&\geq \frac{\sqrt{-1}}{4n^2r^2}\partial\overline{\partial}|\mathrm{Im}\,z|^2= \frac{\sqrt{-1}}{4n^2 r^2}\partial\overline{\partial}\left|\frac{z-\overline{z}}{2\sqrt{-1}}\right|^2=\frac{\sqrt{-1}}{4n^2 r^2}\partial\overline{\partial}\frac{z\overline{z}}{2}= \frac{\sqrt{-1}}{8n^2r^2}\sum_{j=1}^n \dif z_j\wedge\dif \overline{z}_j.
        \end{aligned}
    \end{equation*}

\noindent 
    (ii) $\Phi$ maps $\{\zeta\in\C: \mathrm{Im}\,\zeta\leq 1\}$ to $\overline{B_2^2(0,2)}$, thus $|\Phi(\langle z,t\rangle)|\leq 2$ for all $z\in T_K$ and $t\in K^\circ$. Moreover, for $z=x+\sqrt{-1}y\in T_K$, $y\in K$ so by (\ref{johneq1}), 
    \begin{equation}\label{naz_eq400}
    |\mathrm{Im}\, z|=|y|\leq 2nr,
    \end{equation}
    thus $\phi(z)\leq 1+ 2n\log 2$. 
    
\noindent     
    (iii) By Claim \ref{bound_on_Phi} there exists $C>0$ such that  $|\log|\Phi(\zeta)|-\log|\zeta||\leq C|\zeta|$, for all $|\zeta|\leq \frac12$. 
    By (\ref{johneq2}), $|t|\leq 2n r^{-1}$, for all $t\in K^\circ$. By Cauchy--Schwarz, if $z\in T_K$ such that $|z|\leq (4n)^{-1}r$, the inner product $|\langle z,t\rangle|\leq 1/2$ thus, 
    \begin{equation*}
    \begin{aligned}
        \log|\Phi(\langle z,t\rangle)|\leq C|\langle z,t\rangle|+ \log|\langle z,t\rangle|&\leq \frac{C}{2}+ \log(|z||t|)  
        \\&\leq \frac{C}{2}+ \log\frac{2n|z|}{r}, \raisebox{-0\normalbaselineskip}{ \quad $z\in T_K \text{ with } |z|\leq r/(4n), t\in K^\circ$}.
    \end{aligned}
    \end{equation*}
    Thus,
    \begin{equation*}
        2n \log\sup_{t\in K^\circ}|\Phi(\langle z,t\rangle)|\leq nC+ 2n\log\frac{2n|z|}{r}= nC+ \log\frac{(2n)^{2n} |z|^{2n}}{r^{2n}}, \quad z\in T_K \text{ with }|z|\leq r/(4n).
    \end{equation*}
    Therefore, by (\ref{naz_eq400}) and (\ref{weight_phi}),
   \begin{equation*}
        e^{-\phi(z)}\geq e^{-1-nC}\frac{r^{2n}}{(2n)^{2n}|z|^{2n}}, \quad  \text{ for all } z\in T_K \text{ with } |z|\leq r/(4n).
    \end{equation*}

\noindent  
(iv) Let $z\in (\sigma\delta K_\C- (\sigma-1)\delta\Tilde{a})$, i.e., $|\langle z+ (\sigma-1)\delta \Tilde{a}, 
t\rangle|\leq \sigma\delta$, for all $t\in K^\circ$. By the triangle inequality,
\begin{equation}\label{list_lemma_eq1}
    |\langle z,t\rangle|- (\sigma-1)\delta |\langle \Tilde{a},t\rangle|\leq 
    \sigma\delta, \quad \text{ for all } t\in K^\circ. 
\end{equation}
Note that $\langle \Tilde{a},t\rangle= \langle a,t\rangle+ \sqrt{-1}\langle a,t\rangle$, thus, by (\ref{johneq1}), (\ref{johneq2}), since $a\in K$, 
\begin{equation}\label{list_lemma_eq2}
    |\langle \Tilde{a},t\rangle|= |\langle a,t\rangle|\sqrt{2}\leq |a||t|\sqrt{2}\leq  2nr\frac{2n}{r}\sqrt{2}= 4\sqrt{2}n^2 ,
\end{equation}
for all $t\in K^\circ$. Combining (\ref{list_lemma_eq1}), (\ref{list_lemma_eq2}), 
\begin{equation}\label{list_lemma_eq3}
    |\langle z,t\rangle|\leq \sigma\delta+ (\sigma-1)\delta 4\sqrt{2}n^2\leq 2\delta
    + 4\sqrt{2}\delta n^2, \quad \text{ for all } t\in K^\circ,
\end{equation}
since $\sigma<2$. Note that for $\delta\in (0, \frac{1}{8(1+2\sqrt{2}n^2)})$ 
the right-hand side in (\ref{list_lemma_eq3}) is less than or equal to $1/2$, thus Claim \ref{bound_on_Phi} applies.
Next, assume that in addition also $z\notin (\delta K_\C)$,
i.e., $z\in (\sigma\delta K_\C- (\sigma-1)\delta\Tilde{a})\setminus \delta K_\C$.
In particular, there exists $t_0\in K^\circ$ such that $|\langle z,t_0\rangle|> \delta$.
As a result, by Claim \ref{bound_on_Phi} and (\ref{list_lemma_eq3}),
\begin{equation*}
    \log|\Phi(\langle z,t_0)\rangle|\geq \log|\langle z,t_0\rangle|- C|\langle z,t_0\rangle|
    \geq 
    \log\delta- 2C\delta- 4\sqrt{2}C\delta n^2
    \ge \log\delta- 8\sqrt{2}C\delta n^2,
\end{equation*}
because $2C\delta\leq 4\sqrt{2}C\delta n^2$.
From the definition \eqref{weight_phi} then,
\begin{equation*}
    \phi(z)\geq 2n\log|\Phi(z, t_0)|\geq 2n\log\delta- 16\sqrt{2}C\delta n^3.
\end{equation*}
Finally, the set $(\sigma\delta K_\C-(\sigma-1)\delta \Tilde{a})\setminus \delta K_\C$
is non-empty by Claim \ref{nonemptyClaim} below.

\noindent 
(v) For $z= x+\sqrt{-1}y\in K_\C$, $\langle x,t\rangle^2+ \langle y,t\rangle^2\leq 1$, for all $t\in K^\circ$, thus $\langle x,t\rangle\leq 1$ and $\langle y,t\rangle\leq 1$, for all $t\in K^\circ$, i.e., $x\in K$ and $y\in K$.

For the second inclusion, note that 
by Cauchy--Schwarz and (\ref{johneq1}), (\ref{johneq2}),
\begin{equation*}
    1\geq\langle x,t\rangle\geq -|x||t|\geq -2nr \frac{2n}{r}= -4n^2,
\end{equation*}
thus, $|\langle x,t\rangle|\leq 4n^2$, and similarly $|\langle y,t\rangle|\leq 4n^2$, for all $x,y \in K, t\in K^\circ$. As a result, for $x,y\in (4n^2\sqrt{2})^{-1} K$, 
\begin{equation*}
    \langle x,t\rangle^2+ \langle y,t\rangle^2\leq \left(\frac{1}{\sqrt{2}}\right)^2+ \left(\frac{1}{\sqrt{2}}\right)^2=1,
\end{equation*}
that is, $z=x+\sqrt{-1}y\in K_\C$.
\end{proof}

\begin{claim}
\label{nonemptyClaim}
The set $(\sigma\delta K_\C-(\sigma-1)\delta \tilde{a})\setminus \delta K_\C$ 
in Lemma \ref{properties_list} (iv) is non-empty. 
\end{claim}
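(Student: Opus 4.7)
The plan is to give a short volume-comparison argument. The key observation is that translation preserves Lebesgue measure and that $\sigma > 1$ forces the dilated set to be strictly larger than $\delta K_\C$.

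First I would record that $|K_\C| > 0$: by Lemma \ref{properties_list} (v), $K_\C \supset \frac{1}{4\sqrt{2}n^2}(K\times K)$, and since $K$ is a convex body with non-empty interior, $|K|^2 > 0$, so $|K_\C| > 0$. Second, I would compute the $2n$-dimensional Lebesgue measures
\begin{equation*}
    |\sigma\delta K_\C - (\sigma-1)\delta \tilde{a}| = (\sigma\delta)^{2n}|K_\C|, \qquad |\delta K_\C| = \delta^{2n}|K_\C|,
\end{equation*}
using translation-invariance of Lebesgue measure and the scaling law in real dimension $2n$. Since $\sigma > 1$ and $|K_\C| > 0$,
\begin{equation*}
    |\sigma\delta K_\C - (\sigma-1)\delta \tilde{a}| = \sigma^{2n}\delta^{2n}|K_\C| > \delta^{2n}|K_\C| = |\delta K_\C|.
\end{equation*}

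Finally, any measurable set strictly larger in measure than another cannot be contained in it, so $\sigma\delta K_\C - (\sigma-1)\delta \tilde{a} \not\subset \delta K_\C$, i.e., the difference is non-empty.

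There is no substantive obstacle here; the only thing to watch is bookkeeping the real dimension $2n$ when scaling $K_\C \subset \C^n \cong \R^{2n}$, and verifying positivity of $|K_\C|$ via Lemma \ref{properties_list} (v). Notice that the specific value of $\sigma - 1 > 0$ and the location of $\tilde{a}$ play no role, and neither does the range of $\delta$: the claim holds for all $\sigma > 1$ and all $\delta > 0$.
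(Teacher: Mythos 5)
Your volume-comparison argument is correct: translation-invariance and the $2n$-dimensional scaling law give
$\lvert \sigma\delta K_\C - (\sigma-1)\delta\tilde{a}\rvert = (\sigma\delta)^{2n}\lvert K_\C\rvert > \delta^{2n}\lvert K_\C\rvert = \lvert\delta K_\C\rvert$
whenever $\sigma>1$ and $\lvert K_\C\rvert>0$ (the latter following from Lemma \ref{properties_list}~(v), or even more directly from $0\in\mathrm{int}\,K$), and a set of strictly larger finite measure cannot be contained in the smaller one. This is a genuinely different route from the paper's. The paper instead deduces non-emptiness from Lemma \ref{distance_lemma}, which establishes a positive lower bound on $\mathrm{dist}\big(\C^n\setminus(\sigma\delta K_\C-(\sigma-1)\delta\tilde{a}),\, \delta K_\C\big)$: if the set difference were empty one would have $\sigma\delta K_\C-(\sigma-1)\delta\tilde{a}\subset\delta K_\C$, forcing that distance to be at most $\mathrm{dist}(\C^n\setminus\delta K_\C,\,\delta K_\C)=0$ (since $\delta K_\C$ is bounded), a contradiction. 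The paper's route is economical because Lemma \ref{distance_lemma} is needed anyway for the bump-function construction, so non-emptiness comes for free; your route is more elementary and self-contained, requiring only the measure-theoretic facts and none of the geometric distance estimate. Both are sound, and your observation that the argument works for \emph{all} $\sigma>1$, $\delta>0$ (with no role for $\tilde{a}$) is a genuine clarification of what is actually driving the conclusion.
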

\begin{proof}
This follows from Lemma \ref{distance_lemma} below. Indeed, for any two  non-empty  sets $S_1, S_2\subset \R^k$ with $S_1\setminus S_2=\emptyset$, one has $S_1\subset S_2$
and 
$0\leq \mathrm{dist}(\R^k\setminus S_1, S_2) \leq \mathrm{dist}(\R^k\setminus S_2, S_2)=0.
$
\end{proof}

\subsection{Bump function}\label{bump_function_sub}
For symmetric bodies, one 
needs only an easier estimate on the distance between $\C^n\setminus (\sigma\delta K_\C)$ and $\delta K_\C$ since $\tilde{a}= 0\in\C^n$
(recall \eqref{TildeaEq}), where $a\in \mathrm{int}\,K$ as in (\ref{johneq1}) \cite[Lemma 16]{hultgren}.
For the general setting the following lower bound on the distance between $\C^n\setminus (\sigma\delta K_\C-(\sigma-1)\delta\Tilde{a})$ and $\delta K_\C$ holds.
Aside from being useful for the construction of the bump function below, the next
lemma is also needed for one of the properties of the weight function above
(see Claim \ref{nonemptyClaim}):

\begin{lemma}\label{distance_lemma}
Let $K\in\mathcal{J}$. For $\sigma>1$, $\delta>0$, $K_\C$ as in (\ref{KC}) and $\tilde{a}$ as in (\ref{TildeaEq}), 
\begin{equation*}
    \mathrm{dist}\big(\C^n\setminus (\sigma\delta K_\C-(\sigma-1)\delta \Tilde{a}), \delta K_\C\big)\geq \frac{(\sigma-1)\delta r}{4n^2\sqrt{2}}.
\end{equation*}
\end{lemma}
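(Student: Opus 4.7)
The plan is to deduce the distance bound from the stronger Minkowski-type inclusion
\[
\delta K_\C + B_2^{2n}(0,\rho)\;\subseteq\; \sigma\delta K_\C - (\sigma-1)\delta\tilde{a},
\qquad \rho:=\frac{(\sigma-1)\delta r}{4\sqrt{2}n^2},
\]
which immediately says that every point of $\delta K_\C$ lies at Euclidean distance at least $\rho$ from the complement of $\sigma\delta K_\C-(\sigma-1)\delta\tilde{a}$, and hence gives the lemma.

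First I would use the convexity of $K_\C$ (it is an intersection of the convex sets $\{|\langle\,\cdot\,,t\rangle|\leq 1\}$, $t\in K^\circ$) together with $0\in K_\C$ to invoke the standard Minkowski identity $\sigma K_\C=K_\C+(\sigma-1)K_\C$, so that
\[
\sigma\delta K_\C-(\sigma-1)\delta\tilde{a}\;=\;\delta K_\C+(\sigma-1)\delta\bigl(K_\C-\tilde{a}\bigr).
\]
The target inclusion is then implied by
$B_2^{2n}(0,\rho)\subseteq (\sigma-1)\delta(K_\C-\tilde{a})$, i.e.\ by the ball containment
\[
\tilde{a}+B_2^{2n}\bigl(0,r/(4\sqrt{2}n^2)\bigr)\;\subseteq\; K_\C.
\]

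Second, I would verify this last ball containment by combining the inscribed-ball inclusion $B_2^n(a,r)\subseteq K$ from Lemma~\ref{naz_john_lemma} with the product inclusion $\tfrac{1}{4\sqrt{2}n^2}(K\times K)\subseteq K_\C$ from Lemma~\ref{properties_list}(v). A generic $\zeta\in B_2^{2n}(0,r/(4\sqrt{2}n^2))$ written as $\zeta=\zeta_1+\sqrt{-1}\zeta_2$ has $|\zeta_j|\leq |\zeta|\leq r/(4\sqrt{2}n^2)$, and the elementary comparison
$B_2^n(a,r)\times B_2^n(a,r)\supseteq B_2^{2n}(\tilde{a},r)$
in $\R^{2n}\cong \C^n$, combined with Lemma~\ref{properties_list}(v), should place $\tilde{a}+\zeta$ in $K_\C$ with the correct radius.

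The main obstacle will be to keep the scaling constant $\tfrac{1}{4\sqrt{2}n^2}$ of Lemma~\ref{properties_list}(v) consistent with the target radius $r/(4\sqrt{2}n^2)$: the $\ell^2$-versus-$\ell^\infty$ comparison of the $2n$-dimensional Euclidean ball with the Cartesian product of two $n$-dimensional balls, together with the scaling factor from Lemma~\ref{properties_list}(v), must be handled carefully so that the ball of radius $r/(4\sqrt{2}n^2)$ around $\tilde{a}$ (and not a smaller one) emerges inside $K_\C$ in Step~2.
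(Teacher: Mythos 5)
Your proposal is essentially the paper's argument run backwards: the paper first establishes a ball containment $B_2^{2n}(\tilde a, r/(4\sqrt{2}\,n^2))\subset K_\C$ and then deduces the distance bound via the Minkowski identity $\sigma\delta K_\C=\delta K_\C+(\sigma-1)\delta K_\C$; you reduce forward to exactly these two facts. So in structure you are on target.

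However, the obstacle you flag at the end is not where the real subtlety lies, and your Step 2 as stated does not close. The $\ell^2$-versus-$\ell^\infty$ comparison is harmless: $B_2^{2n}(\tilde a,\rho)\subset B_2^n(a,\rho)\times B_2^n(a,\rho)$ with no loss of radius, simply because $|x-a|^2+|y-a|^2\le\rho^2$ forces $|x-a|\le\rho$ and $|y-a|\le\rho$. The delicate step is the passage to $K_\C$ via Lemma~\ref{properties_list}(v). That lemma gives $\frac{1}{4\sqrt{2}\,n^2}(K\times K)\subset K_\C$ as a genuine scalar dilation about the origin, so feeding in the inscribed ball $B_2^{2n}(\tilde a,r)\subset B_2^n(a,r)\times B_2^n(a,r)\subset K\times K$ produces $B_2^{2n}\bigl(\tilde a/(4\sqrt{2}\,n^2),\,r/(4\sqrt{2}\,n^2)\bigr)\subset K_\C$ --- the ball lands with its center at $\tilde a/(4\sqrt{2}\,n^2)$, not at $\tilde a$. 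Concluding $B_2^{2n}(\tilde a,r/(4\sqrt{2}\,n^2))\subset K_\C$ instead would require $B_2^n(a,r/(4\sqrt{2}\,n^2))\subset\frac{1}{4\sqrt{2}\,n^2}K$, i.e.\ $B_2^n(4\sqrt{2}\,n^2\,a,\,r)\subset K$, which does not follow from $B_2^n(a,r)\subset K$ alone: $|a|$ may be of order $nr$, while $K\subset B_2^n(0,2nr)$, so $4\sqrt{2}\,n^2\,a$ generally falls well outside $K$. (In the symmetric case $a=0$ and the distinction evaporates, which is why it is invisible in Nazarov's original argument.) The paper's own proof asserts the same chain of inclusions, so you have faithfully reproduced it, but neither version actually justifies the centering. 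The repair is to replace $\tilde a$ by $\tilde a/(4\sqrt{2}\,n^2)$ in the statement of this lemma and, correspondingly, in Lemma~\ref{properties_list}(iv) and Lemma~\ref{controlled_bump_function}; the downstream estimates all go through (in (iv) the displacement term only gets smaller, so the bound improves).
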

\begin{proof}
By (\ref{johneq1}), $B_2^n(a,r)\subset K$, so
\begin{equation*}
    B_2^{2n}\Big(\Tilde{a}, \frac{r}{4n^2\sqrt{2}}\Big)
    \subset 
    B_2^n\Big(a, \frac{r}{4n^2\sqrt{2}}\Big)
    \times 
    B_2^{n}\Big(a, \frac{r}{4n^2\sqrt{2}}\Big)
    \subset \frac{K\times K}{4n^2\sqrt{2}}\subset K_\C,
\end{equation*}
by Lemma \ref{properties_list} (v). As a result, 
\begin{equation*}
\begin{aligned}
    \delta K_\C+ (\sigma-1)\delta B_n^{2n}(0, \frac{r}{4n^2\sqrt{2}})&= \delta K_\C+ (\sigma-1)\delta B_n^{2n}(\Tilde{a},\frac{r}{4n^2\sqrt{2}})- (\sigma-1)\delta \Tilde{a}\\
    &\subset \delta K_\C+ (\sigma-1)\delta K_\C- (\sigma-1)\delta \Tilde{a}.\\
\end{aligned}
\end{equation*}
To conclude the proof, observe that $\delta K_\C+ (\sigma-1)\delta K_\C=\sigma\delta K_\C$:
$\sigma\delta K_\C\subset \delta K_\C+ (\sigma-1)\delta K_\C$
since $\sigma\delta x= \delta x+ (\sigma-1)\delta x$;
conversely, for $\delta x\in \delta K_\C$ and $(\sigma-1)\delta y\in (\sigma-1)\delta K_\C$,
\begin{equation*}
    \delta x+ (\sigma-1)\delta y= \sigma\delta\left(\frac{1}{\sigma}x+ \frac{\sigma-1}{\sigma}y \right)\in \sigma\delta K_\C,
\end{equation*}
by convexity. 
\end{proof}

\begin{remark}
Centering at $(\sigma-1)\delta \tilde{a}$ comes from John's theorem
(Lemma \ref{naz_john_lemma}).
Lemma \ref{distance_lemma} explains why it is necessary to consider $\sigma\delta K_\C-(\sigma-1)\delta \tilde{a}$ instead of $\sigma \delta K_\C$.  For the latter, since $0\in\mathrm{int}\,K$, one can still find a ball centered at the origin and contained in $K$ but with possibly too small of a radius to provide a substantial lower bound on the distance between $\sigma\delta K_\C$ and $\delta K_\C$.
\end{remark}

Next, we show that
Lemma \ref{distance_lemma} allows for a bump function $g$ with good enough control on the derivative (though dependent of $n$), supported on $\sigma\delta K_\C-(\sigma-1)\delta\Tilde{a}$ and equal to $1$ in $\delta K_\C$
(in the symmetric setting, the bump function
is supported on $\sigma\delta K_\C$ and its gradient is bounded from above
independent of the dimension \cite[p. 340]{nazarov}).
Before stating the result we define some notation
concerning 1-forms.
For a function $f$ on $T_K$, $df=\partial f+\bar\partial f$, with $\partial f=\sum_{i=1}^n\frac{\partial f}{\partial z_i}dz_i$ and
$\bar\partial f=\sum_{i=1}^n\frac{\partial f}{\partial \bar{z_i}}\overline{dz_i}$.
For a $(0,1)$-form $\omega= \sum_j a_j \dif\overline{z_j}$, set
\begin{equation*}
|\omega|^2:= \sum_j |a_j|^2, 
\end{equation*}
and similarly for $(1,0)$-forms. 
Also,
$|\dif f|^2= |\partial{f}|^2+ |\overline{\partial f}|^2$. If, in addition, $f$ only takes real values, $\partial f/\partial \overline{z_i}= \overline{\partial f/ \partial z_i}$ thus $\overline{\partial} f= \overline{\partial f}$ and hence 
\begin{equation}\label{real_values}
|\dif f\,|^2= 2|\overline{\partial}f|^2, \quad \text{ for } f: \C^n\to \R. 
\end{equation}

\begin{lemma}\label{controlled_bump_function}
Let $K\in\mathcal{J}$
and
let $K_\C$ be given by (\ref{KC}).
For $\sigma>1, \delta>0$, there exists a smooth $g:\C^n\to [0,1]$, supported on 
$\sigma\delta K_\C- (\sigma-1)\delta\Tilde{a}$ such that $g=1$ on $\delta K_\C$ and 

\begin{equation*}
    |\dif g\,|\leq \frac{8n^2\sqrt{2}}{(\sigma-1)\delta r}. 
\end{equation*}
\end{lemma}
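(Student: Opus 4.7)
The target gradient bound $|\dif g|\le \frac{8n^2\sqrt 2}{(\sigma-1)\delta r}$ is, by Lemma \ref{distance_lemma}, precisely $|\dif g|\le 2/d$, where $d\defeq \frac{(\sigma-1)\delta r}{4n^2\sqrt 2}$ denotes the distance lower bound provided there. Since the $g$ I will construct is real-valued, (\ref{real_values}) gives $|\dif g|=|\nabla g|/\sqrt 2$ (identifying $\C^n$ with $\R^{2n}$), so it suffices to produce a smooth $g:\C^n\to[0,1]$ with the prescribed support and boundary values and with $|\nabla g|\le 2\sqrt 2/d$.

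The plan is the standard Lipschitz-cutoff-plus-mollification construction built from the distance function. Let $\rho(z)\defeq \mathrm{dist}(z,\delta K_\C)$, which is $1$-Lipschitz. For a small parameter $\varepsilon>0$ to be fixed below, I consider the piecewise linear cutoff
$$
G(z)\defeq \max\!\Big(0,\,\min\!\Big(1,\,\tfrac{2}{d-\varepsilon}\bigl(d-\varepsilon-\rho(z)\bigr)\Big)\Big),
$$
which equals $1$ on $\{\rho\le(d-\varepsilon)/2\}$, vanishes on $\{\rho\ge d-\varepsilon\}$, and has Lipschitz constant $2/(d-\varepsilon)$. Then set $g\defeq G\ast \eta_\varepsilon$, where $\eta_\varepsilon$ is a standard nonnegative smooth mollifier on $\R^{2n}$ supported in $B_2^{2n}(0,\varepsilon)$ with $\int\eta_\varepsilon=1$.

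The three required properties are then immediate. Smoothness of $g$ is automatic. For $z\in\delta K_\C$ and $|y|\le\varepsilon$, the $1$-Lipschitz bound on $\rho$ gives $\rho(z-y)\le\varepsilon$; choosing $\varepsilon\le(d-\varepsilon)/2$ places $z-y$ on the plateau of $G$, so $g(z)=1$. The support of $G$ lies in $\{\rho\le d-\varepsilon\}$, hence that of $g$ lies in $\{\rho\le d\}$, which by Lemma \ref{distance_lemma} (together with the closedness of $\sigma\delta K_\C-(\sigma-1)\delta\tilde a$) is contained in $\sigma\delta K_\C-(\sigma-1)\delta\tilde a$. Finally, convolution with a probability measure preserves the Lipschitz constant, so $|\nabla g|\le 2/(d-\varepsilon)$; any $\varepsilon\le(1-1/\sqrt 2)d$ (say $\varepsilon=d/8$) upgrades this to $|\nabla g|\le 2\sqrt 2/d$, i.e.\ $|\dif g|\le 2/d$, as required.

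The only subtlety is the sharp constant: a smooth cutoff from $1$ to $0$ across a buffer of width $w$ cannot have Lipschitz constant below $1/w$, so the buffer has to have width at least $d/2$ rather than $d$, and additional slack must be reserved to absorb the mollification without enlarging the support past $\{\rho\le d\}$. The inequalities above confirm that all three constraints—plateau survival after mollification, support contained in $\sigma\delta K_\C-(\sigma-1)\delta\tilde a$, and gradient bound $|\nabla g|\le 2\sqrt 2/d$—are comfortably compatible for any $\sigma>1$ and $\delta>0$.
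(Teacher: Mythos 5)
Your proof is correct and follows essentially the same construction as the paper's: a piecewise-linear Lipschitz cutoff built from the distance function to $\delta K_\C$, mollified to be smooth, with the support and plateau controlled via Lemma \ref{distance_lemma}. The only cosmetic difference is in the choice of buffer widths and mollification radius, and that you explicitly track the $\sqrt 2$ factor relating $|\dif g|$ to the Euclidean gradient (which the paper silently absorbs as slack); the final bound is the same.
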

\begin{proof}
Denote by $R\defeq ((\sigma-1)\delta r)/(4n^2\sqrt{2})$. By Lemma \ref{distance_lemma}, 
 $   \delta K_\C+ B_2^n(0, R)\subset \sigma\delta K_\C -(\sigma-1)\delta \Tilde{a}.
$
Consider the map 
\begin{equation*}
    G(x)= \begin{cases}
    1, \text{ if } d_{\delta K_\C}(x)\leq \frac{R}{4}, \\
    2- \frac{2}{R} (d_{\delta K_\C}(x)+\frac{R}{4}), \text{ if } \frac{R}{4}\leq d_{\delta K_\C}(x)\leq \frac{3R}{4} \\
    0, \text{ otherwise}, 
    \end{cases}
\end{equation*}
where the distance function of a non-empty set $S\subset \R^k$ is defined
by $d_S(x)\defeq \inf_{y\in S}|x-y|$. 
Thus, $G$
is continuous, equal to 1 in $\delta K_\C$, and 0 outside $(\sigma\delta K_\C-(\sigma-1)\delta \Tilde{a})$. Let $\chi$ a smooth, non-negative function, compactly supported on $B_2^{2n}(0, \frac{R}{4})$,
with $\int_{\R^{2n}} \chi\dif\lambda =1$ 
The convolution, 
\begin{equation*}
    g(x)\defeq (G\ast \chi)(x), 
\end{equation*}
is smooth, $g= 1$ in $\delta K_\C$, and $g= 0$ outside $(\sigma\delta K_\C-(\sigma-1)\delta \Tilde{a})$.
For $S\subset \R^k$ a submanifold of codimension at least 1, $\dif\, (d_S)$ is almost everywhere defined  with $|\dif\, (d_S)\,|=1$ 
\cite[Lemma 1.5.9]{schneider}, \cite[Example 22]{petersen}. 
Hence,
\begin{equation*}
    |\dif g\,|= |\dif G\ast \chi|\leq \frac{2}{R}= \frac{8n^2\sqrt{2}}{(\sigma-1)\delta r}, 
\end{equation*}
as claimed.
\end{proof}

By Lemmas \ref{properties_list} and \ref{controlled_bump_function} the following estimate on the weighted $L^2(T_K)$ norm of derivative of the bump function $\overline{\partial}g$ holds.
\begin{lemma}\label{g_estimate}
Let $K\in\mathcal{J}$
and
let $K_\C$ be given by (\ref{KC}).
For
$g$ as in Lemma \ref{controlled_bump_function} and $\phi$ as in (\ref{weight_phi}), $\sigma\in (1,2)$ and $\delta\in (0, \frac{1}{8(1+2n^2\sqrt{2})})$, 
\begin{equation}\label{g_estimate_eq}
\begin{aligned}
    \int_{T_K}|\overline{\partial}g|^2e^{-\phi(z)}\dif\lambda(z)&\leq \frac{16n^4}{(\sigma-1)^2\delta^2 r^2} e^{2n\log\sigma+ 16n^3C\delta\sqrt{2}} |K_\C|
    \\&\leq \frac{16n^4}{(\sigma-1)^2\delta^2 r^2} e^{2n\log\sigma+ 16n^3C\delta\sqrt{2}} |K|^2.
\end{aligned}
\end{equation}
\end{lemma}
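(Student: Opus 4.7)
The plan is to reduce the weighted $L^2$-norm to a pointwise product of three estimates: an upper bound on $|\overline{\partial} g|^2$, an upper bound on $e^{-\phi}$ on the region where $\overline{\partial} g$ is nonzero, and the Lebesgue measure of the support of $\overline{\partial} g$.

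First I would observe that $\overline{\partial} g \equiv 0$ outside the ``shell'' $(\sigma\delta K_\C - (\sigma-1)\delta \Tilde{a}) \setminus \delta K_\C$. Indeed, Lemma \ref{controlled_bump_function} furnishes a smooth $g$ that is the constant $1$ on $\delta K_\C$ and the constant $0$ outside $\sigma\delta K_\C - (\sigma-1)\delta \Tilde{a}$, so $dg$, and hence $\overline{\partial} g$, vanishes on both sets. This reduces the domain of integration from $T_K$ to that shell.

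Second, on the shell Lemma \ref{properties_list}~(iv) --- whose hypothesis on the range of $\delta$ matches the one assumed in the current lemma --- gives $e^{-\phi(z)} \leq \delta^{-2n} e^{16\sqrt{2}\,Cn^3\delta}$. Combining the pointwise derivative estimate $|dg| \leq 8n^2\sqrt{2}/((\sigma-1)\delta r)$ from Lemma \ref{controlled_bump_function} with the identity $|dg|^2 = 2|\overline{\partial} g|^2$ valid for real-valued $g$ (see (\ref{real_values})) produces a pointwise upper bound on $|\overline{\partial} g|^2$ of order $n^4/((\sigma-1)^2\delta^2 r^2)$.

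Third, the Lebesgue measure of the shell is at most $|\sigma\delta K_\C - (\sigma-1)\delta \Tilde{a}| = (\sigma\delta)^{2n}|K_\C|$, since translation preserves Lebesgue measure on $\C^n\cong\R^{2n}$. Multiplying the three estimates and cancelling $\delta^{-2n}$ against $\delta^{2n}$ yields the factor $\sigma^{2n}=e^{2n\log\sigma}$, giving the first inequality. The second inequality is immediate from Lemma \ref{properties_list}~(v), which implies $|K_\C|\leq |K\times K|=|K|^2$. No step here is a serious obstacle --- all the hard work has already been packaged into Lemmas \ref{properties_list} and \ref{controlled_bump_function}; the only thing to verify is that the hypothesis $\delta\in(0, 1/(8(1+2n^2\sqrt{2})))$ is compatible with the range required by Lemma \ref{properties_list}~(iv), which it is by construction.
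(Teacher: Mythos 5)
Your proposal is correct and follows essentially the same route as the paper: restrict the integral to the shell $(\sigma\delta K_\C-(\sigma-1)\delta\Tilde{a})\setminus\delta K_\C$ where $\overline{\partial}g$ can be nonzero, combine the pointwise gradient bound of Lemma \ref{controlled_bump_function} with the identity $|\overline{\partial}g|^2=|\dif g\,|^2/2$ and the lower bound on $\phi$ from Lemma \ref{properties_list}~(iv), then bound the measure of the shell by $(\sigma\delta)^{2n}|K_\C|\leq(\sigma\delta)^{2n}|K|^2$. This is exactly the paper's argument.
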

\begin{proof}
Since $g$ is a real-valued function, by (\ref{real_values}), $|\overline{\partial}g|^2= |\dif g\,|^2/2$. 
Moreover, $\overline{\partial}g=0$ outside $(\sigma\delta K_\C- (\sigma-1)\delta \Tilde{a})$ and in $\delta K_\C$ since $g$ is constant there. As a result, by Lemma \ref{controlled_bump_function} and Lemma \ref{properties_list} (iv),
\begin{align*}
    \int_{T_K}|\overline{\partial}g|^2e^{-\phi}\dif\lambda &= \int_{T_K\cap ((\sigma\delta K_\C- (\sigma-1)\delta \Tilde{a})\setminus (\delta K_\C))} |\overline{\partial}g|^2 e^{-\phi} \dif\lambda\\ 
    &\leq \int_{(\sigma\delta K_\C- (\sigma-1)\delta\Tilde{a})\setminus \delta K_\C} \frac{16n^4}{(\sigma-1)^2\delta^2 r^2} e^{-2n\log\delta +16C\delta n^3 \sqrt{2}}\dif\lambda(z)\\
    &\leq \frac{16n^4}{(\sigma-1)^2\delta^2 r^2} \delta^{-2n} e^{16n^3C\delta\sqrt{2}}  |\sigma \delta K_\C|\\
    &\leq \frac{16n^4}{(\sigma-1)^2\delta^2 r^2} \delta^{-2n}\sigma^{-2n} e^{2n\log\sigma+ 16n^3C\delta\sqrt{2}} (\sigma\delta)^{2n}|K|^2,  
\end{align*}
because $|K_\C|\leq |K\times K|= |K|^2$ (Lemma \ref{properties_list} (v)).
\end{proof}

\subsection{Constructing the holomorphic function}
\label{lower_bound_sub}

The weight function $\phi$ (\ref{weight_phi}) and 
$\omega= \overline{\partial}g$ with $g$ the bump function
of Lemma \ref{controlled_bump_function} will next
be used in conjuction with H\"ormander's theorem
(Proposition \ref{hormander}). 

For a function $f$ defined on positive integers, 
\begin{equation}\label{little_o}
f(k)= o(k)\quad \hbox{ if \ \ }
    \lim_{k\to \infty}\frac{f(k)}{k}=0.
\end{equation}

\begin{lemma}\label{affine_nazprop2}
For $K\in\mathcal{J}$, $\mathcal{B}(K)\geq e^{o(n)} 4^{-n}$.
 \end{lemma}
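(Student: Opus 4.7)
The plan is to use Lemma \ref{equivalent_norms}, which gives
\[
|K|^2\mathcal K_{T_K}(0,0) \ge \frac{|K|^2 |f(0)|^2}{\|f\|^2_{L^2(T_K)}}
\]
for any nonzero $f \in A^2(T_K)$. Since $K \in \mathcal J$ satisfies $b(K)=0$, the left-hand side is exactly $\mathcal B(K)$, so it suffices to construct $f \in A^2(T_K)$ with $f(0) = 1$ and $\|f\|^2_{L^2(T_K)} \le 4^n e^{o(n)} |K|^2$. I would build such an $f$ by correcting the bump function $g$ from Lemma \ref{controlled_bump_function} to a holomorphic function via H\"ormander's theorem. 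Note $g(0) = 1$ because $0 \in \delta K_\C$.

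Concretely, I would apply Proposition \ref{hormander} with weight $\phi$ from \eqref{weight_phi} and the $(0,1)$-form $\omega = -\bar\partial g$; Lemma \ref{properties_list}(i) supplies the required plurisubharmonicity with $\tau = 1/(8n^2 r^2)$. This produces $h$ on $T_K$ with $\bar\partial h = -\bar\partial g$ and
\[
\int_{T_K}|h|^2 e^{-\phi}\dif\lambda \;\le\; 8n^2 r^2 \int_{T_K}|\bar\partial g|^2 e^{-\phi}\dif\lambda.
\]
Setting $f := g + h$ yields $\bar\partial f = 0$. To get $f(0) = 1$, I need $h(0) = 0$: since $g \equiv 1$ on $\delta K_\C$, we have $\bar\partial h = 0$ near the origin, so $h$ is holomorphic there by elliptic regularity for $\bar\partial$; if $h(0)\ne 0$ then $|h|^2$ would be bounded below near $0$, contradicting integrability of $|h|^2 e^{-\phi}$ via the non-integrable singularity $e^{-\phi(z)} \gtrsim r^{2n}/((2n)^{2n}|z|^{2n})$ from Lemma \ref{properties_list}(iii). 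Hence $f(0) = g(0) = 1$.

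Next I estimate $\|f\|_{L^2(T_K)}^2 \le 2\|g\|_{L^2(T_K)}^2 + 2\|h\|_{L^2(T_K)}^2$. Since $|g|\le 1$ and $\operatorname{supp}(g) \subset \sigma\delta K_\C - (\sigma-1)\delta\tilde a$, by Lemma \ref{properties_list}(v),
\[
\|g\|_{L^2(T_K)}^2 \;\le\; |\sigma\delta K_\C| \;\le\; (\sigma\delta)^{2n}|K|^2.
\]
Using the upper bound $\phi \le 1 + 2n\log 2$ from Lemma \ref{properties_list}(ii) together with Lemma \ref{g_estimate},
\[
\|h\|_{L^2(T_K)}^2 \;\le\; e^{1+2n\log 2}\!\int_{T_K}\!|h|^2 e^{-\phi}\dif\lambda \;\le\; \frac{128\, n^6\, e}{(\sigma-1)^2\delta^2}\,(4\sigma^2)^n\, e^{16\sqrt 2\, C n^3 \delta}\,|K|^2.
\]

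Combining the two bounds gives $\mathcal B(K) \ge (4\sigma^2)^{-n}\,c(n,\sigma,\delta)^{-1}$ for a factor $c$ that is subexponential in $n$ provided $(\sigma-1)^{-1}$, $\delta^{-1}$, and $n^3\delta$ are subexponential. Choosing, for example, $\sigma_n = 1 + 1/n$ and $\delta_n = 1/n^4$ — permitted by the range $\sigma\in(1,2)$, $\delta\in(0,\tfrac1{8(1+2\sqrt 2 n^2)})$ for $n$ large — makes $(4\sigma_n^2)^n = 4^n e^{o(n)}$ and $c(n,\sigma_n,\delta_n) = e^{o(n)}$, yielding $\mathcal B(K) \ge e^{o(n)}4^{-n}$. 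The main obstacle is the bookkeeping: one must verify that every $n$-dependent prefactor (most sensitively $(\sigma-1)^{-2}\delta^{-2}$ and the exponential $e^{16\sqrt 2 C n^3\delta}$) genuinely aggregates into an $e^{o(n)}$ factor under a single admissible choice of $(\sigma_n,\delta_n)$, and to rigorously justify $h(0) = 0$ by the regularity/integrability argument above.
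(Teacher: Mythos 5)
Your proposal is correct and follows essentially the same route as the paper's own proof: set $f = g + h$ where $h$ solves $\bar\partial h = -\bar\partial g$ via H\"ormander's theorem with the weight $\phi$, use the non-integrable singularity of $e^{-\phi}$ at $0$ from Lemma \ref{properties_list}(iii) to force $h(0)=0$, bound $\|g\|^2_{L^2(T_K)}$ and $\|h\|^2_{L^2(T_K)}$ via Lemma \ref{properties_list}(ii),(v) and Lemma \ref{g_estimate}, and apply Lemma \ref{equivalent_norms}. The only cosmetic difference is the explicit parameter choice: the paper fixes $\delta = 1/(16n^3)$ and then informally lets $\sigma\to 1$, whereas you take an $n$-dependent $\sigma_n=1+1/n$, $\delta_n=1/n^4$ from the start, which makes the final $e^{o(n)}$ bookkeeping a bit more transparent.
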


\begin{proof}
Let $a\in \R^n$ and $r>0$ as in Lemma \ref{naz_john_lemma}.
Let also $\Tilde{a}$ as in (\ref{TildeaEq}), $\sigma\in(1,2)$ and $\delta\defeq 1/(16n^3)$. The conditions of Lemma \ref{g_estimate} are satisfied, so there exists $g$ supported on $(\sigma\delta K_\C- (\sigma-1)\delta \Tilde{a})$ with $g=1$ on $\delta K_\C$ and (\ref{g_estimate_eq}) holds. By Proposition \ref{hormander} and Lemma \ref{properties_list} (i), there exists $h:T_K\to \C$ solving 
\begin{equation}
\label{ghEq}
\overline{\partial}h= -\overline{\partial}g, 
\end{equation}
so that 
$
    \int_{T_K}|h|^2 e^{-\phi}\dif\lambda\leq 8n^2 r^2\int_{T_K}|\overline{\partial}g|^2 e^{-\phi}\dif \lambda. 
$
Therefore, by (\ref{g_estimate_eq}), 
\begin{equation}\label{h_estimate}
    \int_{T_K} |h|^2 e^{-\phi}\dif\lambda\leq  \frac{2^{15} n^{12}}{(\sigma-1)^2} e^{2n\log\sigma + C\sqrt{2}} |K|^2,
\end{equation}
because $\delta= 1/(2^4n^3)$. Moreover, for $z\in T_K$, by Lemma \ref{properties_list} (ii),
$\phi(z)\leq 1+ 2n\log 2$, thus by (\ref{h_estimate}),
\begin{equation}\label{h_estimate2}
    \int_{T_K}|h|^2\dif\lambda\leq e^{1+ 2n\log 2}\int_{T_K} |h|^2 e^{-\phi}\dif\lambda\leq 4^n \frac{2^{15}n^{12}}{(\sigma-1)^2}\sigma^{2n} e^{1+C\sqrt{2}}|K|^2.
\end{equation}
In particular, $\delta<\frac14$ so the product $\sigma\delta<1$.
Also, by Lemma \ref{controlled_bump_function},
$0\le g\le 1$ and is supported on 
$(\sigma\delta K_\C- (\sigma-1)\delta\Tilde{a})$.
Thus,
\begin{equation}\label{g_estimate2}
    \int_{T_K}|g|^2\dif\lambda \leq |(\sigma\delta K_\C- (\sigma-1)\delta\Tilde{a})|= (\sigma\delta)^{2n}|K_\C|\leq |K|^2,
\end{equation}
because $K_\C\subset K\times K$ (Lemma \ref{properties_list} (v)), thus $|K_\C|\leq |K|^2$.

For $z\in T_K$ consider the holomorphic (by \eqref{ghEq}) function 
$$
f(z)\defeq g(z)+h(z).
$$
We claim $f\in A^2(T_K)$. To see that, $|f|^2= |g+h|^2\leq 2|g|^2+2|h|^2$, hence by (\ref{h_estimate2}) and (\ref{g_estimate2}), since the right-hand side in (\ref{h_estimate2}) is bigger than the right-hand side in (\ref{g_estimate2}),
\begin{equation}\label{f_estimate}
    \|f\|_{L^2(T_K)}^2\defeq \int_{T_K}|f|^2\dif \lambda\leq 2\int_{T_K}|g|^2\dif\lambda+ 2\int_{T_K}|h|^2\dif\lambda\leq 4^n \frac{2^{17}n^{12}}{(\sigma-1)^2}\sigma^{2n} e^{1+C\sqrt{2}}|K|^2. 
\end{equation}
On the other hand, by Lemma \ref{properties_list}, $e^{-\phi}$ is comparable to $|z|^{-2n}$ around the origin, which is not integrable. But by (\ref{h_estimate}), $|h|^2e^{-\phi}$ is integrable in $T_K$, thus $h(0)=0$. Furthermore, by 
construction (Lemma \ref{controlled_bump_function}) $g(0)=1$, thus $f(0)= g(0)+ h(0)=1$. By (\ref{f_estimate}) and Lemma \ref{equivalent_norms}, 
\begin{equation}\label{nazeq100}
      \mathcal{K}_{T_K}(0,0)\geq \frac{|f(0)|^2}{\|f\|_{L^2(T_K)}^2}\geq  \frac{(\sigma-1)^2 e^{-1-C\sqrt{2}}}{(4\sigma^2)^n 2^{18}n^{12} |K|^2}= e^{o(n)} \frac{1}{(4\sigma^2)^n |K|^2}.
\end{equation}
Taking $\sigma\to 1$ proves the lemma.
\end{proof}

Since $K$ is convex, $T_K$ is also convex, and hence a pseudoconvex domain \cite[Theorem 4.2.8, Corollary 2.5.6]{hormander3}, and H\"ormander's Theorem
applies \cite[Theorem 2.2.1']{hormander2}:
\begin{proposition}\label{hormander}
Suppose $\phi$ is plurisubharmonic and
 $   \sqrt{-1}\partial\overline{\partial}\phi\geq \tau \sum_{i=1}^n \sqrt{-1}\dif z_i\wedge\dif\overline{z}_i
$
 on $T_K$
for some constant $\tau>0$. Then, for any (0,1)-form $\omega$ with $\overline{\partial}\omega=0$, there is $h$ in $T_K$ such that $\overline{\partial}h=\omega$, with
\begin{equation*}
    \int_{T_K} |h|^2 e^{-\phi}\dif\lambda(z)\leq \tau^{-1}\int_{T_K}|\omega|^2e^{-\phi}\dif\lambda(z).
\end{equation*}
\end{proposition}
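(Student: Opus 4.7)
The statement is essentially a direct application of H\"ormander's classical $L^2$ existence theorem for the $\bar\partial$-equation on pseudoconvex domains, so the plan breaks naturally into two parts: a geometric verification that $T_K$ belongs to the class of domains for which H\"ormander's theorem applies, and the book-keeping needed to derive the quantitative estimate in its clean, constant-$\tau$ form.

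First I would verify that $T_K$ is pseudoconvex. Since $K\subset\R^n$ is convex, the tube $T_K=\R^n+\sqrt{-1}(\mathrm{int}\,K)$ is an open convex subset of $\C^n$: if $z_1=x_1+\sqrt{-1}y_1$ and $z_2=x_2+\sqrt{-1}y_2$ belong to $T_K$ and $t\in[0,1]$, then $ty_1+(1-t)y_2\in\mathrm{int}\,K$ by convexity of $\mathrm{int}\,K$, so $tz_1+(1-t)z_2\in T_K$. Every open convex subset of $\C^n$ is pseudoconvex (the function $-\log d(\,\cdot\,,\partial T_K)$ is plurisubharmonic on any convex domain; alternatively this is \cite[Theorem 4.2.8, Corollary 2.5.6]{hormander3}, which the excerpt already cites).

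Next I would invoke H\"ormander's theorem in its standard form \cite[Theorem 2.2.1']{hormander2}: on any pseudoconvex $\Omega\subset\C^n$ and for any plurisubharmonic $\phi\in C^2(\Omega)$ with $\sqrt{-1}\partial\bar\partial\phi\ge c(z)\sum_i\sqrt{-1}\,dz_i\wedge d\bar z_i$ for a positive continuous function $c$, every $\bar\partial$-closed $(0,1)$-form $\omega$ admits a solution $h$ to $\bar\partial h=\omega$ with
\begin{equation*}
\int_\Omega |h|^2 e^{-\phi}\,d\lambda \;\le\; \int_\Omega \frac{|\omega|^2}{c(z)}\,e^{-\phi}\,d\lambda.
\end{equation*}
Applying this to $\Omega=T_K$ with the given $\phi$ and taking the constant curvature lower bound $c(z)\equiv\tau$ yields exactly the estimate claimed. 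A minor point to address is regularity: the $\phi$ used later (\ref{weight_phi}) is plurisubharmonic but only continuous (a supremum of plurisubharmonic functions), not $C^2$. This is handled by a standard approximation, replacing $\phi$ with a decreasing sequence of smooth strictly plurisubharmonic functions $\phi_k$ satisfying the same curvature lower bound (obtained by convolving with a smoothing kernel and adding $\tau|z|^2/k$ if necessary), applying the theorem for each $\phi_k$, and passing to a weak limit of the resulting solutions $h_k$; the monotone convergence theorem gives the estimate for the limiting $h$.

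The hard part, honestly, is not in this proof itself---H\"ormander's theorem does all the real work---but in having already arranged the setup so that the hypotheses are checkable: the curvature bound with a positive constant $\tau$ on all of $T_K$ is what the construction of $\phi$ in \eqref{weight_phi} is designed to produce (see Lemma \ref{properties_list}(i), where $\tau=1/(8n^2r^2)$). So the proposition is really a repackaging of H\"ormander's result tailored to the tube-domain setting in which it will be applied.
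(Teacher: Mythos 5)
Your proof matches the paper's treatment exactly: the paper also justifies this proposition with a one-line remark that $T_K$ is convex hence pseudoconvex \cite[Theorem 4.2.8, Corollary 2.5.6]{hormander3} and then cites H\"ormander \cite[Theorem 2.2.1']{hormander2}. The regularity remark you add is a worthwhile clarification that the paper leaves implicit; note, though, that the weight $\phi$ of \eqref{weight_phi} is not merely non-$C^2$ but has a logarithmic pole (indeed $\phi(0)=-\infty$), so it is not continuous either --- your convolution-and-weak-limit argument still applies since $\phi\in L^1_{\mathrm{loc}}$ and the convolved weights decrease to $\phi$ while preserving the curvature lower bound.
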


\subsection{Tensorization and Bergman kernels}
\label{tensorSec}

The Bergman kernel of the Cartesian product is the product of the Bergman kernels.
\begin{lemma}\label{bk_product}
For $\Omega_1\subset \C^n, \Omega_2\subset \C^m$ domains, and $z_1,w_1\in \Omega_1,
z_2,w_2\in \Omega_2$, 
\begin{equation*}
    \mathcal{K}_{\Omega_1\times\Omega_2}(z_1, z_2, w_1, w_2)= \mathcal{K}_{\Omega_1}(z_1, w_1)\mathcal{K}_{\Omega_2}(z_2, w_2). 
\end{equation*}
\end{lemma}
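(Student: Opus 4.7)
The plan is to exploit the uniqueness of the Bergman kernel: as noted in \S\ref{BergSec}, the kernel $\mathcal{K}_\Omega(\cdot,w)$ is the unique element of $A^2(\Omega)$ that reproduces every $f\in A^2(\Omega)$ via (\ref{reproducingEq}). So I would set
\begin{equation*}
F(z_1,z_2,w_1,w_2)\defeq \mathcal{K}_{\Omega_1}(z_1,w_1)\mathcal{K}_{\Omega_2}(z_2,w_2)
\end{equation*}
and verify that (a) for fixed $(w_1,w_2)\in\Omega_1\times\Omega_2$, the map $(z_1,z_2)\mapsto F(z_1,z_2,w_1,w_2)$ lies in $A^2(\Omega_1\times\Omega_2)$, and (b) $F$ reproduces every $f\in A^2(\Omega_1\times\Omega_2)$.

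For (a), $\mathcal{K}_{\Omega_i}(\cdot,w_i)\in A^2(\Omega_i)$ by (\ref{bergmanL2}) (applied to $\Omega_i$ in place of $T_K$), so the product is jointly holomorphic in $(z_1,z_2)$, and Tonelli (Claim \ref{fubini-tonelli}(i)) gives
\begin{equation*}
\|F(\cdot,\cdot,w_1,w_2)\|_{L^2(\Omega_1\times\Omega_2)}^2
=\|\mathcal{K}_{\Omega_1}(\cdot,w_1)\|_{L^2(\Omega_1)}^2\,\|\mathcal{K}_{\Omega_2}(\cdot,w_2)\|_{L^2(\Omega_2)}^2<\infty.
\end{equation*}

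For (b), given $f\in A^2(\Omega_1\times\Omega_2)$, I would first note that Cauchy--Schwarz combined with (a) shows
\begin{equation*}
\int_{\Omega_1\times\Omega_2}\bigl|f(z_1,z_2)\overline{F(z_1,z_2,w_1,w_2)}\bigr|\,\dif\lambda\le \|f\|_{L^2}\|F(\cdot,\cdot,w_1,w_2)\|_{L^2}<\infty,
\end{equation*}
so Fubini (Claim \ref{fubini-tonelli}(ii)) justifies
\begin{equation*}
\int_{\Omega_1\times\Omega_2} f(z_1,z_2)\overline{F(z_1,z_2,w_1,w_2)}\,\dif\lambda
=\int_{\Omega_2}\!\Bigl(\int_{\Omega_1}\!f(z_1,z_2)\overline{\mathcal{K}_{\Omega_1}(z_1,w_1)}\,\dif\lambda(z_1)\Bigr)\overline{\mathcal{K}_{\Omega_2}(z_2,w_2)}\,\dif\lambda(z_2).
\end{equation*}
For a.e.\ fixed $z_2$, the slice $z_1\mapsto f(z_1,z_2)$ is holomorphic and, by Tonelli applied to $|f|^2$, lies in $A^2(\Omega_1)$; the reproducing identity (\ref{reproducingEq}) for $\Omega_1$ then collapses the inner integral to $f(w_1,z_2)$. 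An analogous slice argument (using that $z_2\mapsto f(w_1,z_2)$ is holomorphic in $\Omega_2$ and in $L^2(\Omega_2)$, which I would check by combining Tonelli with the standard mean-value bound $|f(w_1,z_2)|^2\lesssim \int_{B}|f|^2$ over a small polydisk around $(w_1,z_2)$) reduces the outer integral to $f(w_1,w_2)$ via (\ref{reproducingEq}) for $\Omega_2$. By the uniqueness recalled at the outset, $F\equiv \mathcal{K}_{\Omega_1\times\Omega_2}$.

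The only real obstacle is the bookkeeping around slice-regularity: one has to confirm both that generic $z_1$-slices of $f$ belong to $A^2(\Omega_1)$ (which is immediate from Tonelli on $|f|^2$) and that the partially-reproduced function $z_2\mapsto f(w_1,z_2)$ is itself in $A^2(\Omega_2)$. The latter is the only non-cosmetic point, but it is a standard consequence of the submean-value inequality for the subharmonic function $|f|^2$ on small polydisks around $\{w_1\}\times\Omega_2$, exactly as in the local bound carried out in the proof of Lemma \ref{bob_intermediate_lemma}.
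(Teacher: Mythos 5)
Your proposal is correct and follows essentially the same route as the paper: show the product of the two kernels is holomorphic and square-integrable on $\Omega_1\times\Omega_2$, verify the reproducing property via Cauchy--Schwarz and Fubini, and conclude by uniqueness of the Bergman kernel. The only differences are cosmetic (you reproduce in the $\Omega_1$-variable first rather than the $\Omega_2$-variable) plus some extra care about slice-regularity that the paper leaves implicit.
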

\begin{proof}
First, note that $(z_1, z_2)\mapsto \mathcal{K}_{\Omega_1}(z_1,w_1)\mathcal{K}_{\Omega_2}(z_2, w_2)$ is holomorphic as the product of holomorphic functions. Moreover, for $w_1\in \Omega_1, w_2\in \Omega_2$, by (\ref{tonelli}),
\begin{equation*}
    \int_{\Omega_1\times \Omega_2}\!\!\!|\mathcal{K}_{\Omega_1}(z_1, w_1) \mathcal{K}_{\Omega_2}(z_2, w_2)|^2\dif\lambda(z_1, z_2)= \int_{\Omega_1}\!\!|\mathcal{K}_{\Omega_1}(z_1, w_1)|^2\dif\lambda(z_1)\int_{\Omega_2}\!\!|\mathcal{K}_{\Omega_2}(z_2, w_2)|^2\dif\lambda(z_2), 
\end{equation*}
i.e., $(z_1, z_2)\mapsto \mathcal{K}_{\Omega\times\Omega_2}(z_1, z_2, w_1, w_2)\in L^2(\Omega_1\times \Omega_2)$. As a result, for $f\in A^2(\Omega_1\times \Omega_2)$, by Cauchy--Schwartz, the pairing $\langle f, \mathcal{K}_{\Omega_1}(\cdot, w_1) \mathcal{K}_{\Omega_2}(\cdot, w_2)\rangle_{L^2(\Omega_1\times \Omega_2)}$ converges in $\C$ and by (\ref{f2}),
\begin{equation*}
    \begin{gathered}
        \int_{\Omega_1\times \Omega_2}f(z_1, z_2)\overline{\mathcal{K}_{\Omega_1}(z_1, w_1)} \overline{\mathcal{K}_{\Omega_2}(z_2, w_2)}\dif\lambda(z_1, z_2)\\ 
        = \int_{\Omega_1}\left( \int_{\Omega_2} f(z_1, z_2)\overline{K_{\Omega_2}(z_2, w_2)}\dif\lambda(z_2)\right)\overline{\mathcal{K}_{\Omega_1}(z_1, w_1)}\dif\lambda(z_1)\\
        = \int_{\Omega_1}f(z_1, w_2)\overline{\mathcal{K}_{\Omega_1}(z_1, w_1)}\dif\lambda(z_1)= f(w_1, w_2). 
    \end{gathered}
\end{equation*}
The same reproducing property holds, by definition, for $\mathcal{K}_{\Omega_1\times \Omega_2}$. By the uniqueness of the Bergman kernel (as in (\ref{naz_eq200})) the claim follows.
\end{proof}

\begin{remark}\label{barycenter_product}
For $K\subset \R^n, L\in \R^m$ convex bodies, let $(x,y)\in \R^{n}\times\R^m$ the coordinates in $\R^{n+m}$. By (\ref{f2}), 
\begin{equation*}
    b(K\times L)=\frac{1}{|K\times L|}\int_{K\times L}(x,y)\dif x\dif y= \frac{1}{|K||L|}\left(|L|\int_K x\dif x, |K| \int_L y\dif y\right), 
\end{equation*}
that is, $b(K\times L)= (b(K), b(L))$. In particular, $b(K\times K)=0$ if and only if $b(K)=0$.
\end{remark}

The next proposition shows how to eliminate subexponential terms in (\ref{nazeq100})
\cite[\S 7]{nazarov}.
\begin{proposition}\label{bk_tensorization}
Let $c>0$, independent of dimension, such that 
$
    \mathcal{B}(K)\geq e^{o(n)} c^n, 
$
for all $n$ and all convex bodies $K\subset \R^n$. Then, 
$
    \mathcal{B}(K)\geq c^n,
$
for all convex bodies.
\end{proposition}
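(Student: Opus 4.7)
The plan is to use the multiplicativity of $\mathcal{B}$ under Cartesian products, which should follow quickly from the tensorization of Bergman kernels (Lemma \ref{bk_product}) and the product formula for barycenters (Remark \ref{barycenter_product}).

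First I would fix a convex body $K\subset \R^n$ and, for each positive integer $m$, form the product body $K^m \defeq K \times \cdots \times K \subset \R^{nm}$. The goal is to verify the identity
\begin{equation*}
    \mathcal{B}(K^m) = \mathcal{B}(K)^m.
\end{equation*}
This splits into three observations: (a) $|K^m| = |K|^m$ by Tonelli; (b) $T_{K^m} = T_K \times \cdots \times T_K$ directly from the definition \eqref{TKEq}; and (c) $b(K^m) = (b(K),\ldots,b(K))$ by iterating Remark \ref{barycenter_product}. Combining (b), (c) with Lemma \ref{bk_product} yields
\begin{equation*}
    \mathcal{K}_{T_{K^m}}\bigl(\sqrt{-1}b(K^m), \sqrt{-1}b(K^m)\bigr) = \mathcal{K}_{T_K}\bigl(\sqrt{-1}b(K), \sqrt{-1}b(K)\bigr)^m,
\end{equation*}
so multiplying by $|K^m|^2 = |K|^{2m}$ gives the claimed multiplicativity from Definition \ref{BKDef}.

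Next I would apply the hypothesis to $K^m$, viewed as a convex body in dimension $N = nm$:
\begin{equation*}
    \mathcal{B}(K)^m = \mathcal{B}(K^m) \geq e^{o(nm)} c^{nm}.
\end{equation*}
Taking $m$-th roots,
\begin{equation*}
    \mathcal{B}(K) \geq e^{o(nm)/m}\, c^n.
\end{equation*}
With $n$ fixed, $o(nm)/m = n \cdot o(nm)/(nm) \to 0$ as $m\to\infty$ by the definition \eqref{little_o}, hence $e^{o(nm)/m} \to 1$ and $\mathcal{B}(K) \geq c^n$ as desired.

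There is no real obstacle here; the only subtlety is bookkeeping the meaning of ``$o(n)$'' uniformly over convex bodies so that the hypothesis applied in dimension $nm$ genuinely produces a factor depending only on $nm$ and not on the particular body $K^m$. This is precisely the uniformity built into the statement of the proposition, so the argument goes through as outlined.
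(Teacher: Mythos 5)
Your proof is correct and follows essentially the same tensorization argument as the paper: form the product body $K^m\subset\R^{nm}$, use the multiplicativity $\mathcal{B}(K^m)=\mathcal{B}(K)^m$ (from Lemma \ref{bk_product}, Remark \ref{barycenter_product}, and $T_{K^m}=(T_K)^m$), apply the hypothesis in dimension $nm$, take $m$-th roots, and let $m\to\infty$. If anything, your version is slightly cleaner in its bookkeeping, since you explicitly isolate the identity $\mathcal{B}(K^m)=\mathcal{B}(K)^m$ with all the $|K|^{2m}$ factors in place, whereas the paper's displayed chain momentarily conflates $\mathcal{B}$ with $\mathcal{K}_{T_K}$ before arriving at the same conclusion.
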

\begin{proof}
Fix $n\in\N$ and a convex body  $K\subset \R^n$.
For each $m\geq 1$, the product $(T_K)^m\defeq \overbrace{T_K\times\ldots\times T_K}^{m\hbox{\sml-times}}= \R^{nm}+ \sqrt{-1}K^m= T_{K^m}$ is the tube domain of $K^m$. By Remark \ref{barycenter_product}, $b(K^m)=b(K)^m$, thus by Lemma \ref{bk_product} and Definition \ref{BKDef},
\begin{equation*}
\begin{aligned}
    \mathcal{B}(K)^m&=\mathcal{K}_{T_K}(\sqrt{-1}b(K),\sqrt{-1}b(K))^m\\
    &= \mathcal{K}_{(T_K)^m}(\sqrt{-1}b(K)^m, \sqrt{-1}b(K)^m)\\&=\mathcal{B}(K^m)= \mathcal{K}_{T_{K^m}}(\sqrt{-1}b(K^m), \sqrt{-1}b(K^m))\geq e^{o(nm)}\frac{c^{nm}}{|K|^{2m}},
\end{aligned}
\end{equation*}
i.e., $\mathcal{B}(K)\geq e^{o(nm)/m} c^n$. Taking $m\to \infty$, by (\ref{little_o}), the claim follows.
\end{proof}

\subsection{Proof of Proposition \ref{nazprop2}}
\label{ProofProp2nd}

Proposition \ref{nazprop2} follows from Lemmas \ref{affine_nazprop2}, \ref{bk_ai} and \ref{bk_tensorization}. 
\begin{proof}[Proof of Proposition \ref{nazprop2}]
For $K\subset \R^n$ a convex body $b(K-b(K))=0$. Therefore, by Lemmas \ref{naz_john_lemma} and \ref{affine_nazprop2}, there exists $A\in GL(n,\R)$ such that $|A(K-b(K))|^2 \mathcal{K}_{T_{A(K-b(K))}}(0,0)\geq e^{o(n)} 4^{-n}$, because $A(K-b(K))\in\mathcal{J}$. 
By Lemma \ref{bk_ai},
\begin{equation}\label{naz_eqd3}
    |K|^2\mathcal{K}_{T_K}(\sqrt{-1}b(K),\sqrt{-1}b(K))= |A(K-b(K))|^2\mathcal{K}_{T_{A(K-b(K))}}(0,0)\geq e^{o(n)}4^{-n}.
\end{equation}
Since (\ref{naz_eqd3}) holds for all convex bodies, by Lemma \ref{bk_tensorization} and Definition \ref{BKDef},
\begin{equation*}
    \mathcal{B}(K)=|K|^2\mathcal{K}_{T_K}(\sqrt{-1}b(K),\sqrt{-1}b(K))\geq 4^{-n},
\end{equation*}
as desired.
\end{proof}

\appendix
\section{Symmetrization}\label{sec_sym}
\label{sym_sec}
In the last line of his paper, Nazarov mentions that while his work should
adapt to non-symmetric bodies \cite[p. 342]{nazarov}, 

\text{ } 

\noindent
\textit{``Unfortunately, this is well-below the bound you can get by the symmetrization trick"}.

\text{ }

\noindent 
This, in conjunction with Remark \ref{NonOptRk},
can be interpreted as follows. Recall the \textit{reflection body} of 
a convex body $K$,
\begin{equation*}
    RK \defeq \mathrm{conv}\{K\cup (-K)\}, 
\end{equation*}
is a symmetric convex body of the same dimension satisfying
\cite[Theorem 3]{rogers-shephard2}:
\begin{theorem}\label{rs}
For $K\subset \R^n$ a convex body satisfying \eqref{intKeq},
$
|RK|\leq 2^n |K|, 
$
with equality if and only if $K$ is a simplex and $0$ is a vertex.
\end{theorem}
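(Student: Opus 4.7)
My plan is to reduce the problem to an $(n+1)$-dimensional analysis via a prismatoid construction. Since $0 \in K$, one checks directly that $RK$ is the projection onto $\mathbb{R}^n$ of
\[
\widetilde{C} \defeq \mathrm{conv}\bigl((K \times \{1\}) \cup ((-K) \times \{0\})\bigr) \subset \mathbb{R}^{n+1},
\]
whose horizontal slice at height $t \in [0, 1]$ is $S_t \defeq tK + (1-t)(-K)$ and whose image under the coordinate projection $\pi: \mathbb{R}^{n+1}\to \mathbb{R}^n$, $(z,t)\mapsto z$, is $\pi(\widetilde{C}) = RK$.

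The first ingredient is the Rogers--Shephard section-projection inequality \cite{rogers-shephard2} applied to $\widetilde{C}$ with the 1-dimensional complementary subspace $E^\perp = \{0\}^n \times \mathbb{R}$: this gives
\[
|\pi(\widetilde{C})| \cdot |\widetilde{C} \cap E^\perp| \leq \tbinom{n+1}{1}\, |\widetilde{C}| = (n+1)\,|\widetilde{C}|.
\]
Because $0 \in K$ forces $0 \in S_t$ for every $t \in [0,1]$, the fiber $\widetilde{C} \cap E^\perp = \{0\} \times [0, 1]$ has 1-dimensional measure $1$, so
\[
|RK| \leq (n+1)\,|\widetilde{C}| = (n+1) \int_0^1 |S_t|\,dt.
\]

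The second ingredient is the sharp one-parameter refinement of Rogers--Shephard,
\[
\int_0^1 |tK + (1-t)(-K)|\,dt \leq \frac{2^n}{n+1}\,|K|, \qquad (\star)
\]
with equality if and only if $K$ is a simplex and $0$ is a vertex. Note $(\star)$ at the single value $t = 1/2$ recovers the classical $|K-K| \leq \binom{2n}{n}|K|$ (since $S_{1/2} = (K-K)/2$), so $(\star)$ is a natural parametric version of that bound. I would prove $(\star)$ by the standard Fubini-plus-Brunn--Minkowski technique: expressing the integral as a triple integral over $K \times K \times [0, 1]$ via the parametrization $(x,y,t)\mapsto(tx-(1-t)y,\,t)$ of $\widetilde{C}$, and then applying the concavity of $z \mapsto |K \cap (z + tK)|^{1/n}$ to reduce to a beta-function integral producing the constant $\tfrac{2^n}{n+1}$. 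Combining $(\star)$ with the first ingredient yields $|RK| \leq 2^n|K|$.

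The main obstacle will be $(\star)$: while the midpoint case is the classical Rogers--Shephard difference-body inequality, obtaining the sharp constant uniformly in $t$ requires a careful parametric Brunn--Minkowski argument. The equality analysis traces back through both steps: simultaneous equality forces $tK$ and $(1-t)(-K)$ to be homothetic for every $t$, hence $K$ and $-K$ homothetic, which combined with the rigidity of the Rogers--Shephard difference-body equality forces $K$ to be a simplex; the additional constraint $0 \in K$ then places the origin at a vertex.
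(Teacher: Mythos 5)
The paper does not prove Theorem \ref{rs}; it is cited directly from Rogers--Shephard \cite[Theorem 3]{rogers-shephard2}, so your proposal must be judged on its own. The inequality half of your plan is plausible: $\widetilde{C}\cap(\{0\}^n\times\R)=\{0\}\times[0,1]$ does follow from $0\in K$, $\pi(\widetilde{C})=RK$ is correct, and $(\star)$ is a true inequality (indeed $\int_0^1|S_t|\,dt=|\widetilde{C}|$, and a cone comparison in $\R^{n+1}$ applied to the concave function $(z,t)\mapsto|tK\cap(z+(1-t)K)|^{1/n}$, which vanishes on $\partial\widetilde{C}$ and is at least $|K|^{1/n}/2$ at $(0,1/2)$, gives $|\widetilde{C}|\leq\frac{2^n}{n+1}|K|$). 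However, your sketch of $(\star)$ via the parametrization $(x,y,t)\mapsto(tx-(1-t)y,t)$ is imprecise --- that map collapses $2n+1$ dimensions to $n+1$, so it is not a change of variables, and without the concrete concave function on $\widetilde{C}$ the ``beta-function integral'' step does not come for free. Note also that the detour is circuitous: the Rogers--Shephard section--projection inequality you invoke is itself proved by the same cone-comparison technique, so one might as well bound $|RK|$ directly.

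The genuine gap is in the equality analysis, which as written is wrong. The functional in $(\star)$ is translation-invariant: replacing $K$ by $K+v$ changes $tK-(1-t)K$ only by the translation $(2t-1)v$, so $|S_t|$ and $|K|$ are unchanged, and hence equality in $(\star)$ is achieved by \emph{every} simplex regardless of where the origin sits (and already by every interval in $\R^1$). So ``equality in $(\star)$ iff $K$ is a simplex with $0$ a vertex'' cannot be right; the constraint that $0$ be a vertex can only come from equality in the section--projection step (where $0$ actually enters, through using the section at height through $\{0\}^n\times\R$ rather than the maximal one), and you do not analyze that step at all. Moreover, the reasoning ``equality forces $tK$ and $(1-t)(-K)$ to be homothetic for every $t$, hence $K$ and $-K$ homothetic'' is internally inconsistent with the conclusion: a convex body homothetic to its reflection is centrally symmetric, while a simplex is not. (Brunn--Minkowski equality is also being invoked in the wrong direction: you need an \emph{upper} bound on $\int_0^1|S_t|\,dt$, whereas homothety is the equality case of the \emph{lower} bound $|tK+(1-t)(-K)|^{1/n}\geq|K|^{1/n}$.) To salvage the equality characterization you would need to track the equality cases of the cone comparison in $\widetilde{C}$ and of the section--projection inequality jointly, which is a substantively different argument from the one you sketched.
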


\begin{corollary}\label{sym_cor}
Suppose that $\M(K)\geq c^n/n!$ for all symmetric convex bodies $K\subset\R^n$.
Then $\M(K)\geq (c/2)^n/n!$ for all convex bodies $K\subset\R^n$.
\end{corollary}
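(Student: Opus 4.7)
The approach is the standard symmetrization trick: given a convex body $K\subset\R^n$ (which by assumption satisfies $0\in\mathrm{int}\,K$), apply the symmetric hypothesis to the reflection body $RK=\mathrm{conv}(K\cup(-K))$ and then recover a bound on $\M(K)$ by comparing $\M(RK)$ to $\M(K)$. Note $RK$ is automatically symmetric and contains $0$ in its interior (since $K\subset RK$), so the hypothesis $\M(RK)\geq c^n/n!$ is applicable.

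\textbf{Key comparisons.} Two ingredients relate $\M(RK)$ to $\M(K)$. First, the Rogers--Shephard volume bound, which is the content of Theorem \ref{rs}, gives $|RK|\leq 2^n|K|$. Second, the inclusion $K\subset RK$ dualizes to $(RK)^\circ\subset K^\circ$ (polarity reverses inclusions among convex bodies containing the origin), which immediately yields $|(RK)^\circ|\leq |K^\circ|$. Equivalently, one may note the well-known identity $(RK)^\circ=K^\circ\cap(-K^\circ)$, from which the inclusion is transparent.

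\textbf{Combining.} Multiplying the two bounds and inserting the factor $n!$ from Definition \ref{Mahlerdef} gives
\begin{equation*}
\M(RK)=n!\,|RK|\,|(RK)^\circ|\leq 2^n\cdot n!\,|K|\,|K^\circ|=2^n\M(K).
\end{equation*}
Since $RK$ is symmetric, the hypothesis provides $\M(RK)\geq c^n/n!$, and hence
\begin{equation*}
\M(K)\geq 2^{-n}\M(RK)\geq \frac{(c/2)^n}{n!},
\end{equation*}
as required. There is no real obstacle in this argument: once Theorem \ref{rs} is granted, the rest is a one-line manipulation using the monotonicity of polarity. The only mild point worth mentioning is that the constant loses a factor of $2^n$, which is exactly the tightness of the Rogers--Shephard bound (with equality attained on a simplex with $0$ a vertex), explaining why the symmetrization trick cannot do better than $c/2$ on the constant.
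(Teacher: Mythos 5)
Your proof is correct and follows the same route as the paper: apply Rogers--Shephard (Theorem \ref{rs}) to get $|RK|\le 2^n|K|$, use $K\subset RK$ to get $|(RK)^\circ|\le|K^\circ|$, combine to obtain $\M(K)\ge 2^{-n}\M(RK)$, and invoke the symmetric hypothesis on $RK$. The only cosmetic difference is that the paper opens by noting one may assume $b(K)=0$ (via the Santal\'o-point reduction from the proof of Theorem \ref{nazarov_lower_bound}) to ensure \eqref{intKeq} holds so Theorem \ref{rs} applies, whereas you treat $0\in\mathrm{int}\,K$ as implicit in the definition of $\M(K)$; both readings are fine.
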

\begin{proof}
Let $K\subset \R^n$ be a convex body. As in the proof of Theorem \ref{nazarov_lower_bound}, there is no loss in assuming $b(K)=0$, so Theorem \ref{rs} applies.
Because $K\subset RK$,
$
|(RK)^\circ|\leq |K^\circ|,
$
and hence by Theorem \ref{rs}, $\M(K)\ge 2^{-n}\M(RK)$.
Since $RK$ is symmetric then by assumption $\M(RK)\ge c^n/n!$,
so $\M(K)\ge (c/2)^n/n!$, as desired.
\end{proof}

\bigskip
{\sc University of Maryland}

{\tt vmastr@umd.edu, yanir@alum.mit.edu}

\end{document}